\documentclass[10pt]{article}

\makeatletter
\DeclareRobustCommand{\qed}{%
  \ifmmode 
  \else \leavevmode\unskip\penalty9999 \hbox{}\nobreak\hfill
  \fi
  \quad\hbox{\qedsymbol}}
\providecommand{\upd}[1]{{\color{red}#1}} 
\providecommand{\Q}{\mathbb{Q}}
\newcommand{\openbox}{\leavevmode
  \hbox to.77778em{%
  \hfil\vrule
  \vbox to.675em{\hrule width.6em\vfil\hrule}%
  \vrule\hfil}}
\newcommand{\qedsymbol}{\openbox}
\newenvironment{proof}[1][\proofname]{\par
  \normalfont
  \topsep6\p@\@plus6\p@ \trivlist
  \item[\hskip\labelsep\itshape
    #1.]\ignorespaces
}{%
  \qed\endtrivlist
}
\newcommand{\proofname}{Proof}
\makeatother

\usepackage[utf8]{inputenc}   
\usepackage{bm}
\usepackage{color}
\usepackage{latexsym}
\usepackage{dsfont}
\usepackage{amssymb}
\usepackage{comment}
\usepackage{graphicx}
\usepackage{amsmath,amsfonts,amssymb,theorem,euscript,array,enumerate,amsfonts,mathrsfs}
\usepackage{hyperref}
\usepackage{appendix}
\usepackage[T1]{fontenc}
\usepackage{babel}
\numberwithin{equation}{section}
\usepackage{bbm}
\usepackage{subfigure}
\usepackage{color}
\usepackage{stmaryrd}
\makeatletter\input{Ustmry.fd}\makeatother
\DeclareFontShape{U}{stmry}{b}{n}{<-> ssub * stmry/m/n}{}

\usepackage[algo2e,ruled,vlined]{algorithm2e} 
 \usepackage{algorithm}
 \usepackage{algorithmicx}
\usepackage{algpseudocode}
\usepackage{ marvosym }
\usepackage{hyperref}
\usepackage{bm}
\usepackage{xcolor, soul}

\usepackage{mathtools}
\mathtoolsset{showonlyrefs}

\usepackage{comment}

\usepackage{tikz}
\usetikzlibrary{fit,matrix,chains,positioning,decorations.pathreplacing,arrows}

\usepackage{mathtools}

\usepackage{geometry}
\usepackage{algpseudocode}
\usepackage{algorithm}

\usepackage[normalem]{ulem}

\def \Sum{\displaystyle\sum}

\def \b1{\bf{1}}

\def \I{\mathbb{I}}
\def \N{\mathbb{N}}
\def \R{\mathbb{R}}

\def \E{\mathbb{E}}
\def \F{\mathbb{F}}

\def \P{\mathbb{P}}
\def \Q{\mathbb{Q}}

\def \S{\mathbb{S}}

\def \H{\mathbb{H}}

\def \bV{\boldsymbol{V}} 
\def \bnu{\boldsymbol{\nu}}

\def \d{\mathrm{d}}

\def\esssup_#1{\underset{#1}{\mathrm{ess\,sup\, }}}

\def\argmin_#1{\underset{#1}{\mathrm{argmin\, }}}
\def\argmax_#1{\underset{#1}{\mathrm{argmax\, }}}

\def\dm#1{\frac{\delta}{\delta m}}

\def \Ac{{\cal A}}
\def \Bc{{\cal B}}
\def \Cc{{\cal C}}

\def \Ec{{\cal E}}
\def \Fc{{\cal F}}

\def \Ic{{\cal I}}

\def \Lc{{\cal L}}
\def \Pc{{\cal P}}

\def \Sc{{\cal S}}

\def \Wc{{\cal W}}

\def\bX{{\boldsymbol X}}

\def\bx{{\boldsymbol x}}
\def\by{{\boldsymbol y}}

\def\bm{{\bf m}}

\def \d{\mathrm{d}}

\def\beqs{\begin{eqnarray*}}
\def\enqs{\end{eqnarray*}}
\def\beq{\begin{eqnarray}}
\def\enq{\end{eqnarray}}

\renewcommand{\thefootnote}{\alph{footnote}}

\newcommand{\red}[1]{\textcolor{red}{#1}}
\newcommand{\bl}[1]{\textcolor{blue}{#1}}

\def\red#1{{\color{red}#1}}

\newtheorem{Theorem}{Theorem}[section] 
\newtheorem{Definition}[Theorem]{Definition} 
\newtheorem{Proposition}[Theorem]{Proposition}
\newtheorem{Assumption}[Theorem]{Assumption}
\newtheorem{Lemma}[Theorem]{Lemma}
\newtheorem{Corollary}[Theorem]{Corollary}
\newtheorem{Remark}[Theorem]{Remark}

\title{
Stochastic Maximum Principle for McKean–Vlasov Control with Discrete Path Dependence
}

\author{
Yadh Hafsi$^{*}$ \qquad
Samy Mekkaoui$^{\dagger}$ \qquad
Huyên Pham$^{\ddagger}$
}

\date{}

\begin{document}

\maketitle

\begingroup
\renewcommand{\thefootnote}{\fnsymbol{footnote}}
\footnotetext[1]{Ecole Polytechnique, CMAP, \sf yadh.hafsi at polytechnique.edu; This author acknowledges support from the Chaire Risque Financiers, Société Générale, at École Polytechnique, and from the Institut Europlace de Finance (IEF).}
\footnotetext[2]{Ecole Polytechnique, CMAP, \sf samy.mekkaoui at polytechnique.edu; This author is supported by the S-G Chair "Risques Financiers", and the "Deep Finance and Statistics" Qube-RT Chair.}
\footnotetext[3]{Ecole Polytechnique, CMAP, \sf huyen.pham at polytechnique.edu; This author is supported by
the Chair ``Risques Financiers" Soci\'et\'e G\'en\'erale,  by FiME,
Laboratoire de Finance des Marchés de l'Energie, and the ``Finance and Sustainable Development'' EDF -
CACIB Chair, and the Chair AIDA-F (Artificial Intelligence for Data Augmentation in Finance) with Bank of America.}
\endgroup

\begin{abstract}
We study a class of McKean--Vlasov control problems with discrete path dependence. The coefficients and cost functional may depend on finitely many past values of the controlled state, observed at fixed deterministic times, and on their joint law. We establish well-posedness of the controlled state equation and derive necessary and sufficient optimality conditions through a stochastic Pontryagin maximum principle. The adjoint process is characterized by a backward stochastic differential equation with jumps at fixed observation times. Each jump represents the conditional sensitivity of future costs with respect to the corresponding observed state and its distribution. In the linear-quadratic case, we prove global solvability of the resulting forward-backward system by combining a continuation argument with a mean-field Riccati reduction. We finally discuss an application to time-series generation, where the terminal cost is given by a kernel-based discrepancy between the law of the sampled path and a target path distribution. 
\end{abstract}

\vspace{5mm}

\noindent {\bf MSC Classification}: 93E20, 60H30, 49K45, 49N80 

\vspace{5mm}

\noindent {\bf Key words}: Path-dependent stochastic control; Mean-field FBSDE; Pontryagin maximum principle;  Time-series generation.

\section{Introduction}

McKean--Vlasov stochastic control has become a fundamental framework for optimization problems involving interactions through probability distributions. In its classical formulation, the dynamics and the cost functional depend on the current state of the controlled process and on its marginal distribution. Such problems arise naturally in mean-field control, economics, engineering, and more recently in machine learning and generative modeling. Over the last decade, a rich theory has been developed for these models, including dynamic programming principles, master equations, and stochastic Pontryagin maximum principles, see among others  \cite{bensoussan2013meanfield,carmona2018probabilistic_II}.

Early versions of the stochastic maximum principle for mean-field dynamics go back to \cite{andersson2011maximum,buckdahn2011general} for linear dependence on the law of the process and for general non-linear dependence in \cite{carmona_delarue_2015,acciaio2019extended}. In these works, the coefficients of the state equation and of the cost depend on the current state and control and on its marginal law. 

However, in many situations, the relevant information is not contained in the current marginal distribution alone, but in the joint distribution of observations collected at several dates. 
Given deterministic observation dates $0<t_1<\cdots<t_N=T$,  
a typical objective functional considered in this paper takes the form 
\begin{equation}
J(\alpha)  
=
\mathbb E\Big[
\int_0^T f(t,X_t,\Lc(X_t),\alpha_t) \d t \Big]
+
g\left(
\Lc(X_{t_1},\ldots,X_{t_N})
\right),
\label{eq:intro-functional}
\end{equation}
where $X$ is the controlled state process, $\Lc(X_{t_1},\ldots,X_{t_N})$ its joint law at the observation dates,  and $\alpha$ is an admissible control.

Unlike classical McKean--Vlasov control problems, the objective functional \eqref{eq:intro-functional} depends on the joint law of observations collected along the trajectory. Such dependence naturally arises whenever temporal interactions play a significant role. Indeed, properties such as autocorrelation, lead--lag effects or persistence phenomena  are encoded in the joint distribution of observations rather than in their individual marginals. Consequently, controlling only the marginal laws \(\Lc(X_t)\) is generally insufficient for reproducing the temporal structure of the underlying process.

The purpose of this paper is to develop a stochastic maximum principle for McKean--Vlasov control problems with such discrete path dependence. More precisely, we consider controlled diffusions whose coefficients and objective functional may depend on the current state, its current distribution, the vector of previously observed values $(X_{t_1},\ldots,X_{t_k})$ for $t_k\leq t<t_{k+1}$, and the joint distribution of these observations. The dependence is assumed to be non-anticipative  (see Definition \ref{def : non_anticipative}), so that only observations already revealed at time $t$ may influence the dynamics.

The framework considered here occupies an intermediate position between classical and fully path-dependent McKean--Vlasov control. In classical McKean--Vlasov control, the coefficients depend only on $(X_t,\Lc(X_t))$. 
At the other end of the spectrum, recent works have investigated control problems depending on the entire past trajectory $X_{[0,t]}$ and on its law. Such formulations naturally lead to infinite-dimensional state spaces and 
require sophisticated analytical tools including functional It\^o calculus and path-dependent partial differential equations; see \cite{Dupire2009}, \cite{ContFournie2013},  and  \cite{EkrenKellerTouziZhang2014}.
By contrast, the present paper focuses on dependence through a finite collection of observation dates. This discrete path dependence is rich enough to capture temporal interactions while preserving a finite-dimensional structure.

Recently, path-dependent McKean--Vlasov control problems have been studied in \cite{huyencosso_McKV2023} through dynamic programming methods and in \cite{buckdahn2025path} through a stochastic maximum principle.  The finite-observation structure considered here leads to a different adjoint characterization. A precise comparison with a formulation by a family of $N$ coupled continuous adjoint processes is given in Remark~\ref{rem : N_adjoints}.

\paragraph*{Our main contributions.} 
We first establish existence and uniqueness of the controlled McKean--Vlasov dynamics under standard Lipschitz and linear-growth assumptions. The main novelty of the present work is the appearance of  jumps in the adjoint equation. In the classical McKean--Vlasov setting, the adjoint process is continuous and satisfies a backward stochastic differential equation. In contrast, the adjoint process $(Y,Z)$ associated with \eqref{eq:intro-functional} is c\`adl\`ag and exhibits jumps at the observation dates $t_1,\ldots,t_{N-1}$. These jumps reflect the fact that additional path information becomes available whenever a new observation date is reached. More precisely, the jump size at each observation date $(t_k)_{1 \leq k \leq N-1}$ is given by
\begin{align}\label{eq : jump_terms0}
  -\Delta Y_{t_k}
  &=\mathbb{E}\Big[\int_{t_k}^{T}\Big(\partial_{x_k}H(t,\boldsymbol{\Theta}_t)
  +\tilde{\mathbb{E}}\big[\partial_{\tilde x_k}\tfrac{\delta}{\delta\boldsymbol{m}}
  H(t,\tilde{\boldsymbol{\Theta}}_t)(\boldsymbol{X})\big]\Big)\mathrm{d}t \\
  &\hspace{1.2 cm} +\partial_{x_k}g(\boldsymbol{X},\Lc(\bX)))
   + \tilde{\mathbb{E}}\big[\partial_{\tilde x_k}\tfrac{\delta}{\delta\boldsymbol{m}}
g(\tilde{\bX},\Lc(\bX))(\boldsymbol{X})\big]
  \,\Big|\,\mathcal{F}_{t_k}\Big],
\end{align}
where $H$ is the Hamiltonian function and 
where we set 
$\boldsymbol{\Theta}_t=\big((X_{t_1},\ldots, X_{t_l}),\Lc(X_{t_1},\ldots, X_{t_l}),X_t,\Lc({X_t}),Y_t,Z_t,\alpha_t \big)
$ for $t_l \leq t < t_{l+1}$ for any $1 \leq k \leq l \leq N-1$
, $\tilde{\boldsymbol{\Theta}}$ an independent copy and $\bX=(X_{t_1},\ldots,X_{t_N})$.


We derive the  Pontryagin stochastic maximum principle providing necessary conditions for optimality. We characterize the adjoint process as the solution of a backward stochastic differential equation with  jump at deterministic times given by \eqref{eq : jump_terms0}.  Under suitable convexity assumptions on the Hamiltonian and the terminal cost, we further obtain sufficient conditions for optimality.
Our second main result  concerns solvability of the resulting forward--backward system. In the linear-quadratic case, we prove global solvability by a continuation method, in the spirit of
\cite{peng1999fully,carmona_delarue_2015}. We also show that the system can be reduced to a mean-field Riccati system, related to the Riccati methods used in linear-quadratic McKean--Vlasov control \cite{yong2013linear,basei2019weak,sun2020stochastic}. This gives a novel tractable description of the optimal feedback control in the linear-quadratic setting.
Indeed, the Riccati equations together with the jump relations characterize the affine structure of the adjoint process, while the stability of the conditional expectation of the state process allows the coefficients arising in the jump terms to be identified. These ingredients lead to a recursive backward characterization of the system, in which all the coefficients characterizing the adjoint process can be successively identified.

One motivation for the present framework comes from recent stochastic approaches to generative modeling, see \cite{ma2025schr,boustany2026learning,boustany2026deep}. In applications involving time-series data, the target object is naturally a probability measure on path space, or more precisely the joint law of observations collected at several dates. Matching only marginal distributions generally fails to reproduce the temporal dependence structure of the data. The control framework developed in this paper provides a probabilistic formulation for such problems and yields an optimality system that explicitly incorporates path-distribution dependence. To establish the well-posedness of this optimality system, we consider the Maximum Mean Discrepancy (MMD) distance on the space of probability measures (see \cite{sriperumbudur2010hilbert}), and, under suitable regularity assumptions on the kernel $K$, prove local-in-time existence and uniqueness of the resulting forward-backward stochastic differential equation. 

\paragraph*{Outline of the paper}
The remainder of the paper is organized as follows. 
Section~\ref{sec : problem} introduces the control problem and establishes well-posedness of the state dynamics. Section~\ref{sec : hamiltonian_adjoint_equations} derives the variational equations and the adjoint backward stochastic differential equation. Section~\ref{sec : pontryagin} proves the stochastic maximum principle and presents sufficient conditions for optimality. Section \ref{sec : solvability} studies solvability of the
forward--backward system, first in the linear-quadratic case and then in
the application to time-series generation. Section~\ref{sec : conclusion} concludes, and additional proofs are gathered in the appendix.

\section{Problem formulation}
\label{sec : problem}

\subsection{Notations}

Throughout the paper, we fix a finite time horizon $T>0$ and a complete
probability space $(\Omega,\Fc,\P)$ supporting an $n$-dimensional Brownian
motion $W=(W_t)_{0\le t\le T}$. Let $\F = (\Fc_t)_{0 \leq t \leq T}$ be the natural filtration generated by $W$ augmented with the $\P$-null sets and with the $\sigma$-algebra generated by a rich enough random variable independent of $W$ used for randomization of the initial condition of the state process.
The expectation under $\P$ is denoted by $\E$. For a given random variable $\chi$ over $(\Omega,\Fc,\P)$, we denote by $\P_{\chi}$ its law under $\P$.

We denote by $\Cc([0,T];\R^d)$ the space of continuous functions from
$[0,T]$ into $\R^d$,  and 
\begin{align*}
\begin{cases}
\S^p([0,T];\R^d)
&:= \bigg\lbrace Y=(Y_t)_{0\leq t\leq T} :
Y \text{ is $\F$-prog. measurable, càdlàg and }
\E\bigg[\underset{0\leq t\leq T}{\mathrm{sup}} |Y_t|^p\bigg]<\infty
\bigg\rbrace,
\quad p\geq 1, \\[0.4em]
\S^\infty([0,T];\R^d)
&:= \bigg\lbrace Y=(Y_t)_{0\leq t\leq T} :
Y \text{ is $\F$-prog.  measurable, càdlàg and }
\bigg\|
\underset{0\leq t\leq T}{\mathrm{sup}} |Y_t|
\bigg\|_{L^\infty}<\infty
\bigg\rbrace, \\[0.4em]
\H^2([0,T];\R^{d\times n})
&:= \bigg\lbrace Z=(Z_t)_{0\leq t\leq T} :
Z \text{ is $\F$-prog.  measurable and }
\E\bigg[\int_0^T |Z_t|^2\,\d t\bigg]<\infty
\bigg\rbrace, \\[0.4em]
\mathrm{BMO}([0,T];\R^{d\times n})
&:= \bigg\lbrace Z=(Z_t)_{0\leq t\leq T} :
Z \text{ is $\F$-prog.  measurable and }
\underset{\tau}{\mathrm{sup}}\,
\bigg\|
\E\bigg[
\left.
\int_\tau^T |Z_t|^2\,\d t
\,\right|\,\F_\tau
\bigg]
\bigg\|_{L^\infty}<\infty
\bigg\rbrace,
\end{cases}
\end{align*}
where the supremum in the definition of $\mathrm{BMO}$ is taken over all $\F$-stopping times $\tau$ valued in $[0,T]$ and where $\lVert \chi \rVert_{L^{\infty}} := \text{inf } \big \lbrace C  \geq 0 : |\chi| \leq C, \quad \P-\text{a.s} \big \rbrace$ for any random variable $\chi$ over $(\Omega,\Fc,\P)$.

For a Polish space $E$ endowed with a metric $d_E$, we denote by
$\Pc_2(E)$ the set of Borel probability measures $\mu$ on $E$ with finite
second moment, i.e.,  $\int_E d_E(x,x_0)^2 \mu(\d x)$ $<$ $\infty$ for every $x_0\in E$. 
For $\mu,\nu\in\Pc_2(E)$, we
write $\Pi(\mu,\nu)$ for the set of probability measures on $E\times E$
with first marginal $\mu$ and second marginal $\nu$. The associated
$2$-Wasserstein distance is
$$
\Wc_2(\mu,\nu)
:=
\left(
\inf_{\pi\in\Pi(\mu,\nu)}
\int_{E\times E} d_E(x,y)^2\,\pi(\d x,\d y)
\right)^{1/2}.
$$
When $E=\R^d$, the metric $d_E$ is the Euclidean distance. When $E = \Cc([0,T];\R^d)$, we also denote by
\begin{align*}
    \Wc_{2,T}(\mu,\nu) := \bigg( \underset{\pi \in \Pi(\mu,\nu)}{\text{ inf }}  \int \underset{0 \leq t \leq T}{\text{ sup }} | \omega_t - \tilde{\omega}_t|^2 \pi(\d \omega, \d \tilde{\omega}) \bigg)^{1/2}.
\end{align*}

\noindent  We fix a finite time grid $\Pi := \big \lbrace 0=t_0 < t_1 < t_2 < \ldots < t_N = T \big \rbrace $ over the interval $[0,T]$. We introduce the  map $\eta$ defined as $[0,T] \owns t \mapsto \eta(t) := \max \big \lbrace i \in \llbracket 1,N \rrbracket :  t_i \leq t \big \rbrace$ with the convention $\eta(t) = 0$ for $t < t_1$, so that $t_{\eta(t)}$ is the largest grid point not exceeding $t$. We define the projection maps as follows: For any $i,j\in \llbracket 1, N \rrbracket$, we define the projection maps $\text{pr}_{1:i}$ and $\text{pr}_i$ as
\begin{align*}
\begin{cases}
     (\R^d)^N \owns \boldsymbol{x} = (x_1,\ldots,x_N)  &\mapsto \text{pr}_{1:i}(\bx)=\big( x_1, \ldots,x_i \big) 
    = : \bx_{i} \in (\R^{d})^i , \\
    (\R^d)^N \ni \bx =(x_1,\ldots, x_N) &\mapsto \text{pr}_i(\bx)=x_i \in \R^d.
\end{cases}
\end{align*}
Given any measure $\boldsymbol{\mu} \in \Pc_2((\R^d)^N)$, we denote by
   $ \boldsymbol{\mu}_{i} :=\text{pr}_{1:i} \sharp \boldsymbol{\mu},$
where $\sharp$ denotes the pushforward measure. By convention, we set $\bx_{\eta(t)}=0$ and $\boldsymbol{\mu}_{\eta(t)}= \delta_0$ if $t < t_1$.
\begin{Definition}\label{def : non_anticipative}
    Let $h$ be a Borel measurable map defined over $[0,T] \times (\R^d)^N \times \Pc_2\big((\R^d)^N\big) $ into $\R^{p \times q}$.
     We say that $h$ is a $\Pi$-non-anticipative  map if it is in the form
    \begin{align}
    \begin{cases}
        h(t,\boldsymbol{x},\boldsymbol{\mu}) &= h_0(t), \hspace{1.5 cm} t_0 \leq t < t_1, \\
        h(t,\bx, \boldsymbol{\mu}) &= h_{i}(t,\bx_{i}, \boldsymbol{\mu}_{i}), \quad t_{i} \leq t < t_{i+1} \quad  \text{ for $i \in \llbracket 1, N-1 \rrbracket$},  \\
        h(T,\bx,\boldsymbol{\mu}) &= h_N(\boldsymbol{x},\boldsymbol{\mu}),
    \end{cases}
\end{align}
for some measurable maps  $h_0 : [t_0,t_1) \to \R^{p \times q}$ and $h_i : [t_i, t_{i+1}) \times (\R^d)^i \times \Pc_2((\R^d)^i) \to \R^{p \times q} $, $i \in \llbracket 1, N-1 \rrbracket$ and $h_N : (\R^d)^N \times \Pc_2((\R^d)^N) \to \R^{p \times q}$
\end{Definition}
\begin{Remark}
    Whenever a non-anticipative map $h$ is differentiable with respect to its state argument, with gradient denoted by $\partial_{\bx} h(t,\bx,\boldsymbol{\mu})$ valued in $(\R^{(p \times q) \times d})^N$, we see that the   partial derivative with respect to the $j$-th coordinate of $\bx$ vanishes whenever $j > i$ , i.e. $\partial_{x_j} h(t,\bx, \boldsymbol{\mu})=0$, for $t \in [t_i, t_{i+1})$. We then set 
    \begin{align} \label{eq : def_derivative}
\partial_{\bx} h(t,\bx,\boldsymbol{\mu}) \cdot  \bx := \sum_{i=1}^{\eta(t)} \partial_{x_i} h(t,\bx,\boldsymbol{\mu}) x_i \in \R^{p \times q},
\end{align}
and  whenever $p=q=1$, $\partial_{x_i} h(t,\bx,\boldsymbol{\mu}) x_i= \partial_{x_i} h(t,\bx,\boldsymbol{\mu}) \cdot x_i$ is understood as the standard scalar product over $\R^d$.
\end{Remark}

\subsection{The control problem}

This section sets up the control problem studied throughout the paper. We introduce the controlled state equation, formulate the standing regularity assumptions on its coefficients, prove well-posedness, and define the cost functional to be minimized. Let $A \subset \R^{m}$ be a non-empty  closed and convex Borel subset for some $m \in \N^{\star}$. An admissible control is an $A$-valued $\F$-progressively measurable process $\alpha=(\alpha_t)_{0 \leq t \leq T}$ satisfying $\E \big[ \int_{0}^{T} | \alpha_t|^2 \d t \big] < \infty$. We denote by $\Ac$ the set of all admissible controls. The convexity of $A$ ensures that for any $\alpha,\beta \in \Ac$ and any $\epsilon \in [0,1]$, the convex combination $\alpha + \epsilon(\beta - \alpha)$ also lies in $\Ac$. We fix drift and  diffusion coefficients  $b,\sigma : [0,T] \times  (\R^d)^N \times \Pc_2((\R^d)^N) \times \R^d \times \Pc_2(\R^d) \times A \to \R^d, \R^{d \times n} $ on which we make the following assumptions. 

\begin{Assumption}\label{assumption : regularity_b_and_sigma} 
\begin{enumerate}
    \item [(1)] The functions $b$ and $\sigma$ are $\Pi$-non-anticipative.
    \item [(2)] There exists a constant $L \geq 0$ such that for $\phi$ $=$ $b,\sigma$, 
    \begin{align*}
        | \phi(t, \bx ,\boldsymbol{\mu}&, x,\mu, a) - \phi(t,\bx',\boldsymbol{\mu}', x',\mu',a) | \leq  L \big( |\bx_{\eta(t)} - \bx'_{\eta(t)}| + \Wc_2(\boldsymbol{\mu}_{\eta(t)}, \boldsymbol{\mu}'_{\eta(t)})   + \;  |x-x'| + \Wc_2(\mu,\mu') \big), 
    \end{align*}
     for $t \in [0,T], \bx,\bx' \in (\R^d)^N$, $\boldsymbol{\mu}, \boldsymbol{\mu}' \in \Pc_2((\R^d)^N)$, $x,x' \in \R^d$, $\mu,\mu' \in \Pc_2(\R^d)$ and $a \in A$.
    \item [(3)] There exists a constant $M \geq 0$ such that
    \begin{align*}
        | b(t,\boldsymbol{0}, \boldsymbol{\delta}_{0}, 0, \delta_0,a)| 
        +  | \sigma(t,\boldsymbol{0}, \boldsymbol{\delta}_0, 0, \delta_0,a) |&\leq  M \big( 1+ |a| \big), 
    \end{align*}
    for any $t \in [0,T]$ and $a \in A$.
\end{enumerate}
\end{Assumption}

\vspace{1mm}

Next, given $\alpha \in \Ac$, we consider the following state process $X^{\alpha}=X:=(X_t)_{0 \leq t \leq T}$ governed by the McKean-Vlasov SDE 
\begin{align}\label{eq : state_process_dynamics}
\begin{cases}
    \d X_t  = b(t, \bX, \P_{ \bX}, X_t, \P_{X_t}, \alpha_t) \d t + \sigma(t, \bX, \P_{ \bX}, X_t, \P_{X_t}, \alpha_t) \d W_t,\\
    X_0  = \xi,
\end{cases}
\end{align}
where $t \in [0,T]$ and $\bX=(X_{t_1},\ldots,X_{t_N})$. An admissible initial condition is given by an $\Fc_0$-measurable $\R^d$-valued random variable $\xi$ satisfying $\E \big[ |\xi|^2 \big] < \infty$, independent of the Brownian motion $W$.

\begin{Proposition}\label{thm : existence_unicity_X_SDE}
    Fix an admissible initial condition $\xi$ and an admissible control $\alpha \in \Ac$. Then, under Assumption \ref{assumption : regularity_b_and_sigma}, there exists a unique strong solution in $\S^2([0,T];\R^d)$ to the SDE \eqref{eq : state_process_dynamics}. Moreover, the following estimate holds.
    \normalfont
    \begin{align*}
        \E \Big[ ~\underset{0 \leq t \leq T}{\text{sup }} |X_t|^2 \Big] \leq C \bigg( 1+ \E \big[ |\xi|^2 \big] + \E \Big[ \int_{0}^{T}  |\alpha_t|^2 \d t \Big] \bigg),
    \end{align*}
    where $C$ is a constant depending on the Lipschitz and linear-growth coefficients of $b$ and $\sigma$, on $T$, and on $N$.
\end{Proposition}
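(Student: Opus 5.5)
The plan is to solve \eqref{eq : state_process_dynamics} by forward induction over the grid intervals $[t_k,t_{k+1}]$, $k=0,\ldots,N-1$, exploiting the $\Pi$-non-anticipative structure from Assumption \ref{assumption : regularity_b_and_sigma}(1). On $[t_0,t_1)$ the coefficients do not depend on $(\bX,\P_{\bX})$ at all (by convention $\bx_{\eta(t)}=0$, $\boldsymbol{\mu}_{\eta(t)}=\delta_0$). The equation there is a standard McKean--Vlasov SDE with coefficients $(t,x,\mu,\omega)\mapsto \phi(t,\boldsymbol{0},\boldsymbol{\delta}_0,x,\mu,\alpha_t(\omega))$. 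These are Lipschitz in $(x,\mu)$ uniformly, by (2), and satisfy $|\phi(t,\boldsymbol 0,\boldsymbol\delta_0,0,\delta_0,\alpha_t)|\le M(1+|\alpha_t|)$, which is in $L^2(\d t\otimes\d\P)$, by (3). Existence and uniqueness on $[0,t_1]$ in $\S^2$ then follows from the classical Picard/contraction argument on $\S^2([0,t_1];\R^d)$. I would briefly recall this: the map $\Phi(X)$ is obtained by plugging $X$ and its marginal laws $\P_{X_t}$ into the coefficients. Using $\Wc_2(\P_{X_t},\P_{X'_t})^2\le \E|X_t-X'_t|^2$, BDG and Gronwall, some iterate of $\Phi$ is a contraction (or $\Phi$ is a contraction under a weighted norm $e^{-\beta t}$).

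Assume next that $X$ has been uniquely constructed on $[0,t_k]$, so that $\bX_k=(X_{t_1},\ldots,X_{t_k})$ is a given $\Fc_{t_k}$-measurable vector in $L^2$ with a fixed law $\P_{\bX_k}$. On $[t_k,t_{k+1}]$ the coefficients equal $b_k(t,\bX_k,\P_{\bX_k},x,\mu,\alpha_t)$, and similarly for $\sigma_k$. These are random (through $\bX_k$ and $\alpha_t$) but progressively measurable, Lipschitz in $(x,\mu)$ with constant $L$, and satisfy
$$|\phi_k(t,\bX_k,\P_{\bX_k},0,\delta_0,\alpha_t)|\le M(1+|\alpha_t|)+L\big(|\bX_k|+(\E|\bX_k|^2)^{1/2}\big),$$
using $\Wc_2(\P_{\bX_k},\delta_0)^2=\E|\bX_k|^2$. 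The same fixed-point argument on $[t_k,t_{k+1}]$, started from $X_{t_k}$, gives a unique solution there. The equality at $t=t_{k+1}$ is handled by continuity of the paths: the value at $T$ (i.e. $h_N$) only matters on a null set of times. Concatenating the pieces gives a continuous adapted solution on $[0,T]$. Uniqueness is global, because any two solutions coincide on $[0,t_1]$, hence share $\bX_1$, and so on by induction.

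For the estimate, I would run the standard a priori bound on each interval. With $Y_k:=\E\sup_{t_k\le t\le t_{k+1}}|X_t|^2$, BDG, the growth bound above and Gronwall give
$$Y_k\le C\Big(\E|X_{t_k}|^2+\E|\bX_k|^2+1+\E\int_{t_k}^{t_{k+1}}|\alpha_t|^2\d t\Big).$$
Here $\E|\bX_k|^2\le k\max_{j\le k}\E|X_{t_j}|^2\le k\max_{j<k}Y_j$. Iterating this recursion $N$ times yields the bound $C(1+\E|\xi|^2+\E\int_0^T|\alpha_t|^2\d t)$, where $C$ depends on $L,M,T,N$; this explains the $N$-dependence. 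The only step needing some care is the induction: the frozen past $(\bX_k,\P_{\bX_k})$ must enter as an admissible random, progressively measurable input of the classical MKV well-posedness result rather than as a deterministic parameter. Once the Lipschitz/growth bounds above are written with the random constant $|\bX_k|\in L^2$, this reduces to the standard proof, so I expect no genuine obstacle.
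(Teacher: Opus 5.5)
Your argument is correct, but it is organised differently from the paper's proof. The paper first freezes a law $\bnu \in \Pc_2(\Cc([0,T];\R^d))$ on path space. It then solves the resulting non-McKean--Vlasov SDE by concatenating solutions over the grid intervals; the interval structure is needed there only because the path arguments $X_{t_i}$ are the solution's own values. Next it runs a single Sznitman-type fixed point for $\bnu \mapsto \P_{X^{\bnu}}$ in $\Wc_{2,T}$, using the bound $\Wc_{2,s}^2(\Psi(\bnu^1),\Psi(\bnu^2)) \leq C\int_0^s \Wc_{2,r}^2(\bnu^1,\bnu^2)\,\d r$ and iterating it to get the $C^kT^k/k!$ contraction. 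The moment estimate is then one global BDG--Gr\"onwall argument based on $\sum_{i\leq \eta(s)}\E|X_{t_i}|^2 \leq N\,\E[\sup_{r\leq s}|X_r|^2]$.

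You swap the order of these two operations. Your induction over intervals is carried out on the McKean--Vlasov problem itself, so at step $k$ the law $\P_{\bX_k}$ is already determined. Only the marginal flow $(\P_{X_t})_{t_k\leq t\leq t_{k+1}}$ remains unknown, and each step reduces to the classical MKV well-posedness theorem with random, progressively measurable coefficients, started at $t_k$ from the $\Fc_{t_k}$-measurable datum $X_{t_k}$.

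What your route buys is modularity: you never work on path space with $\Wc_{2,T}$, and global uniqueness is transparent by induction. The price is an $N$-step recursion for the a priori bound, whose constant may grow geometrically in $N$. This is harmless, since $C$ may depend on $N$, and once existence is known you could equally obtain the estimate by one global Gr\"onwall as the paper does. The paper's global fixed point, in turn, treats all measure arguments at once. It would carry over unchanged to coefficients depending on the law of the whole past trajectory, where your interval-by-interval freezing of the law is not available.
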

\begin{proof}
    See Appendix \ref{app:proofs}.
\end{proof}

We next introduce two reward functions 
\begin{align*}
    f : [0,T] \times (\R^d)^{N} \times \Pc_2((\R^d)^N) \times \R^d \times \Pc_2(\R^d) \times  A \to \R, \\
    g: (\R^d)^N \times \Pc_2( (\R^d)^{N}) \to \R, \notag 
\end{align*}
on which we make the following assumptions.

\begin{Assumption}\label{assumption : regularity_f_and_g} 
\begin{enumerate}
  \item [(1)] The function $f$ is $\Pi$-non-anticipative and $g$ is Borel measurable.
  \item [(2)] There exists a constant $M \geq 0$ such that
  \begin{align*}
  \begin{cases}
       -M \big( 1+ |\bx_{\eta(t)}|^2 + \Wc_2^2(\boldsymbol{\mu}_{\eta(t)},\boldsymbol{\delta}_0) + |x|^2 + \Wc_2^2(\mu,\delta_0) \big)\leq f(t,\bx,\boldsymbol{\mu}, x,\mu,a),\\f(t,\bx,\boldsymbol{\mu}, x,\mu,a) \leq  M \big( 1+ |\bx_{\eta(t)}|^2 + \Wc_2^2(\boldsymbol{\mu}_{\eta(t)}, \boldsymbol{\delta}_0) + |x|^2 + \Wc_2^2(\mu,\delta_0) + |a|^2 \big), \\
       |g(\bx,\boldsymbol{\mu})| \leq M \big( 1+ |\bx|^2 + \Wc_2^2(\boldsymbol{\mu},\boldsymbol{\delta}_0) \big),
  \end{cases}
  \end{align*}
   for any $t \in [0,T]$, $\bx \in (\R^d)^N$, $\boldsymbol{\mu} \in \Pc_2((\R^d)^N)$, $x \in \R^d$, $\mu \in \Pc_2(\R^d)$ and $a \in A$.
\end{enumerate}
    
\end{Assumption}

We define the cost functional $J : \Ac \to \R$ as
\begin{align}\label{eq : cost_functional}
    J(\alpha) := \E \Big[ \int_{0}^{T} f(t, \bX, \P_{\bX},X_t, \P_{X_t}, \alpha_t) \d t  + g(\bX,\P_{\bX})  \Big]. 
\end{align}
Under Assumptions \ref{assumption : regularity_b_and_sigma} and \ref{assumption : regularity_f_and_g}, we get from Proposition \ref{thm : existence_unicity_X_SDE} that $J(\alpha)$ is well defined for any $\alpha \in \Ac$.
Our aim is to study the optimal control problem \eqref{eq : state_process_dynamics}-\eqref{eq : cost_functional} consisting of minimizing over $\Ac$ the cost functional $J$, that is, computing
\begin{align*}
    V_0 := \underset{\alpha \in \Ac}{\text{ inf }} J(\alpha),
\end{align*}
and finding an optimal control $\alpha^{\star} \in \Ac$, that is  $J(\alpha^{\star})= V_0$.

\section{Hamiltonian and adjoint equations}\label{sec : hamiltonian_adjoint_equations}

Before stating the Hamiltonian and the adjoint system, we briefly recall the notion of linear functional derivative on the Wasserstein space that will be used throughout this section, and introduce the natural notion of convexity for the $\Pi$-non-anticipative maps appearing in our setting.
\subsection{Linear functional derivatives}
\begin{Definition}\label{def : linear_functional_derivative}
\noindent A measurable map $U : \Pc_2((\R^d)^\ell) \to \R$ admits a linear functional derivative if there exists a measurable map
\begin{align}\label{eq : derivative_notations}
\Pc_2((\R^d)^{\ell}) \times (\R^d)^{\ell} \owns (\boldsymbol{\mu},\tilde{\bx}) \mapsto \frac{\delta U}{\delta \bm}(\boldsymbol{\mu})(\tilde{\bx}) \in \R,
\end{align}
with at most quadratic growth in $\tilde{\bx}$ locally uniformly in $\boldsymbol{\mu}$, such that for every $\boldsymbol{\mu},\boldsymbol{\mu}' \in \Pc_2((\R^d)^{\ell})$
\begin{align*}
    U(\boldsymbol{\mu}') - U(\boldsymbol{\mu}) = \int_{0}^{1} \int_{(\R^d)^{\ell}} \frac{\delta U}{\delta \bm}\big(\lambda \boldsymbol{\mu}' + (1-\lambda) \boldsymbol{\mu}\big)(\tilde{\bx}) \big(\boldsymbol{\mu}' - \boldsymbol{\mu}\big)(\d \tilde{\bx}) \d \lambda.
\end{align*}
Whenever $\tilde{\bx} \mapsto \frac{\delta U}{\delta \bm}(\boldsymbol{\mu})(\tilde{\bx})$ is differentiable on $(\R^d)^{\ell}$, we denote by $\partial_{\tilde{\bx}}\frac{\delta U}{\delta \bm}(\boldsymbol{\mu})(\tilde{\bx})$ its gradient. The linear functional derivative $\frac{\delta}{\delta m}$ with respect to a measure $\mu \in \Pc_2(\R^d)$ (and associated partial gradient $\partial_{\tilde{x}}\frac{\delta}{\delta m}$) is defined analogously. For a map $U$ depending on several variables, the notation $\frac{\delta U}{\delta \bm}$ (resp.\ $\frac{\delta U}{\delta m}$) refers to the linear functional derivative taken with respect to the path-measure argument $\boldsymbol{\mu} \in \Pc_2((\R^d)^{\ell})$ (resp.\ the state-measure argument $\mu \in \Pc_2(\R^d)$), the other arguments being kept fixed. Whenever $U : (\R^d)^\ell \times \Pc_2((\R^d)^\ell)$ is jointly differentiable, we shall denote by $\partial_\bx U(\bx,\boldsymbol{\mu})$ the partial derivative with respect to its $\bx$ argument while $\partial_{\tilde{\bx}} \frac{\delta}{\delta \bm} U(\bx,\boldsymbol{\mu})(\tilde{\bx})$ refers to the derivative with respect to $\tilde{\bx}$.
\end{Definition}


\subsection{Convexity for $\Pi$-non-anticipative differentiable maps}

\begin{Definition}\label{def: convexity_maps}

\noindent A $\Pi$-non-anticipative differentiable map $H : [0,T] \times (\R^d)^{N} \times \Pc_2( (\R^d)^{N}) \times \R^d \times \Pc_2(\R^d) \times A \to \R$ is convex if the following holds
\begin{align}\label{eq : convexity_def_Hamiltonian}
      H(t,\bx'&,\boldsymbol{\mu}',x',\mu',a') - H(t,\bx,\boldsymbol{\mu},x,\mu,a)  \\
    \geq & \; \;  \partial_{\bx} H(t,\bx,\boldsymbol{\mu}, x,\mu,a)  \cdot (\bx' - \bx)  
    + \;  \E\Big[ \partial_{\tilde{\bx}} \frac{\delta}{\delta \bm} H(t,\bx,\boldsymbol{\mu},x,\mu, a)(\bX) \cdot  (\bX' - \bX)  \Big]  \\
    &\qquad +\; \partial_x H(t,\bx,\boldsymbol{\mu},x,\mu,a) \cdot(x'-x) +  \E\big[\partial_{\tilde{x}} \frac{\delta}{\delta m} H(t,\bx,\boldsymbol{\mu},x,\mu,a)(X) \cdot (X'-X) \big]  \\
    &\qquad  + \; \partial_a H(t,\bx,\boldsymbol{\mu},x,\mu,a) \cdot (a'-a),  \end{align}
for any $\bx=(x_1,\ldots,x_N),\bx' = (x'_1,\ldots,x'_N) \in (\R^d)^{N}$, $\boldsymbol{\mu},\boldsymbol{\mu}' \in \Pc_2( (\R^d)^{N})$, $x,x' \in \R^d$, $\mu,\mu' \in \Pc_2(\R^d)$, $a,a' \in A$ and $t \in [0,T]$, and for any coupling,  i.e., for any pair of random vectors $(\bX, \bX') = ((X_1,\ldots,X_{N}),(X'_1,\ldots,X'_{N}))$ and any pair of random variables $(X,X')$ defined on a common probability space $(\Omega,\Fc,\P)$ with marginal laws $\bX \sim \boldsymbol{\mu}$, $\bX' \sim \boldsymbol{\mu}'$, $X \sim \mu$ and $X' \sim \mu'$ and where we recall the notation introduced in \eqref{eq : def_derivative}.
\end{Definition}

\begin{Remark}
    The notion of convexity in \eqref{eq : convexity_def_Hamiltonian} is the natural extension of the convexity assumption in \cite{carmona_delarue_2015}. Indeed, the additional terms are characterized by the path arguments $\bx$ and $\boldsymbol{\mu}$ and appear as contributions over the time grid.
\end{Remark}

\subsection{Hamiltonian and dual equations}

The Hamiltonian of the stochastic optimization problem is defined as the real-valued  measurable map $H$ given by
\begin{align}\label{eq : hamiltonian_map}
    H(t,\bx,\boldsymbol{\mu},x,\mu,y,z,a)  & := b(t,\bx,\boldsymbol{\mu},x,\mu,a) \cdot y + \sigma(t,\bx,\boldsymbol{\mu},x,\mu,a) \cdot z  + \;  f(t,\bx,\boldsymbol{\mu},x,\mu,a),  \end{align}
for any $(t,\bx,\boldsymbol{\mu},x,\mu,y,z,a) \in [0,T] \times (\R^d)^N \times \Pc_2((\R^d)^N) \times \R^d \times \Pc_2(\R^d) \times \R^d \times \R^{d \times n} \times A$, where the dot notation stands here for the inner product in a Euclidean space.
Consistently with the $\Pi$-non-anticipative convention introduced for the coefficients $b,\sigma, f$ in Section \ref{sec : problem}, we further notice that $H$ is also $\Pi$-non-anticipative.

\begin{Definition}\label{def : adjoint_processes}Suppose that $b,\sigma,f$ and $g$ are jointly differentiable with respect to $\bx,\boldsymbol{\mu},x$ and $\mu$.  Fix an admissible initial condition $\xi$ and  admissible control $\alpha \in \Ac$, and denote by $X=(X_t)_{0 \leq t \leq T}$ the solution to \eqref{eq : state_process_dynamics}. Suppose also  that 
{\footnotesize
\begin{align}
\begin{cases}
    &\E \bigg[ \int_{0}^{T} \Big[ | \partial_x f( t,\bX ,\P_{\bX} ,X_t,\P_{X_t},\alpha_t) |^2 + \tilde{\E} \Big[ | \partial_{\tilde{x}} \frac{\delta}{\delta m} {f}(t,\bX, \P_{\bX},X_t, \P_{X_t} , \alpha_t)(\tilde{X}_{t} )|^2 \Big] \d t  \\
    & \; +  \int_{0}^{T} \Sum_{k=1}^{N} \Big( |\partial_{x_k} f(t, \bX ,  \P_{\bX},X_t,\P_{X_t}, \alpha_t) |^2+ \tilde{\E} \big[  |\partial_{\tilde{x}_k} \frac{\delta}{\delta \bm} f(t,\bX , \P_{\bX},X_t,\P_{X_t} , \alpha_t)(\tilde{\bX}) |^2 \big] \Big)  \Big] \d t \\ 
    & \;  + \; \E \bigg[ \Sum_{k=1}^{N} \Big( | \partial_{x_k} g(\bX,\P_{\bX})|^2  + \tilde{\E} \big[ |\partial_{\tilde{x}_k} \frac{\delta}{\delta \bm} g(\bX, \P_{\bX})(\tilde{\bX}) |^2 \big] \Big) \bigg] < \infty,
\end{cases}
\end{align}}
where $(\tilde{\bX}, \tilde{X}, \tilde{\alpha}, \tilde{Y}, \tilde{Z})$ is an independent copy
of $(\bX,X,\alpha,Y,Z)$ defined on another probability space $(\tilde{\Omega},\tilde{\Fc},\tilde{\P})$ with expectation operator $\tilde{\E}$.

We call adjoint processes of $X$ any pair of processes $(Y,Z) \in \S^2([0,T];\R^d) \times \H^2([0,T]; \R^{d \times n})$ solution to the following adjoint equations
\begin{equation}
\resizebox{\textwidth}{!}{$
\begin{aligned}\label{defYadjoint} 
\begin{cases}
    \d Y_t & = \; - \bigg[ \partial_{x} {H}(t, \bX_{} , \P_{\bX_{}},X_t,\P_{X_t},Y_t,Z_t,\alpha_t) + \tilde{\E} \Big[ \partial_{\tilde{x}} \frac{\delta}{\delta m} {H}(t,\tilde{\bX}_{}, \P_{\bX_{}},\tilde{X}_t, \P_{X_t},\tilde{Y}_t, \tilde{Z}_t, \tilde{\alpha}_t)(X_t) \Big]\bigg] \d t \\
    & \qquad + \;  Z_t \d W_t,  \qquad \text{for } \; t_{k-1} \leq t < t_k,\ k \in \llbracket 1, N-1 \rrbracket,\ \text{and}\ t_{N-1} \leq t \leq t_N,  \\
    \Delta Y_{t_k} &:= \; Y_{t_k} - Y_{t_k^{-}} \; = \;  - \E \Bigg[ \int_{t_k}^{T}  \bigg(\partial_{x_k} {H}(t,\bX_{}, \P_{\bX_{}}, X_t,\P_{X_t},Y_t,Z_t,\alpha_t) + \;  \tilde{\E} \big[ \partial_{\tilde{x}_k} \frac{\delta}{\delta \bm} {H}(t,\tilde{\bX}_{}, \P_{\bX_{}},\tilde{X}_t,\P_{X_t}, \tilde{Y}_t,\tilde{Z}_t, \tilde{\alpha}_t)(\bX_{}) \big]  \bigg) \d t   \\
    & \qquad\qquad\qquad\qquad\qquad\qquad + \; 
    \Big( \partial_{x_k} g(\bX, \P_{\bX}) + \tilde{\E} \big[ \partial_{\tilde{x}_k} \frac{\delta}{\delta \bm} g(\tilde{\bX}, \P_{\bX})(\bX)  \big] \Big)
    \Bigg| \Fc_{t_k} \Bigg], \quad k\in \llbracket 1, N-1 \rrbracket,  \\
    Y_{t_N} &= \; \partial_{x_N} g(\bX,\P_{\bX}) + \tilde{\E} \Big[ \partial_{\tilde{x}_N} \frac{\delta}{\delta \bm}g(\tilde{\bX}, \P_{\bX})(\bX) \Big]. 
\end{cases}
\end{aligned}
$}
\end{equation}
\end{Definition}
\begin{Remark}\label{rem : N_adjoints}
It is worth mentioning that the adjoint process in Definition \ref{def : adjoint_processes} can be written equivalently in integral form as  
\begin{align*}
    Y_t =  Y_{T} + \int_{t}^{T} F_s \d s - \int_{t}^{T} Z_s \d W_s + \sum_{k \,:\, t < t_k \leq t_{N-1}} \xi_k, \quad t \in [0,T], \quad \P-\text{a.s},
\end{align*}
where we denoted for simplicity
\begin{align}\label{eq : system_FBSDE}
\begin{cases}
    F_s &:= \Big[ \partial_{x} {H}(s, \bX_{}, \P_{\bX_{}}, X_s, \P_{X_s}, Y_s, Z_s, \alpha_s) + \tilde{\E} \big[\partial_{\tilde{x}} \frac{\delta}{\delta m} {H}(s, \tilde{\bX}_{}, \P_{\bX_{}}, \tilde{X}_s, \P_{X_s} , \tilde{Y}_s , \tilde{Z}_s, \tilde{\alpha}_s)(X_s) \big] \Big], 
    \\
    \xi_{k} &:= \E \Bigg[ \int_{t_k}^{T}  \Big(\partial_{x_k} {H}(t,\bX_{}, \P_{\bX_{}}, X_t,\P_{X_t},Y_t,Z_t,\alpha_t) + \;  \tilde{\E} \big[ \partial_{\tilde{x}_k} \frac{\delta}{\delta \bm} {H}(t,\tilde{\bX}_{}, \P_{\bX_{}},\tilde{X}_t,\P_{X_t}, \tilde{Y}_t,\tilde{Z}_t, \tilde{\alpha}_t)(\bX_{}) \big]  \Big) \d t   \\
    & \quad \quad \quad \quad \quad \quad + \; 
    \Big( \partial_{x_k} g(\bX, \P_{\bX}) + \tilde{\E} \big[ \partial_{\tilde{x}_k} \frac{\delta}{\delta \bm} g(\tilde{\bX}, \P_{\bX})(\bX)  \big] \Big)
    \Bigg| \Fc_{t_k} \Bigg], \quad \text{\upshape for  $k \in \llbracket 1, N-1 \rrbracket,$} \\
    Y_T &= \partial_{x_N} g(\bX, \P_{\bX}) + \tilde{\E} \Big[ \partial_{\tilde{x}_N} \frac{\delta}{\delta \bm} g(\tilde{\bX}, \P_{\bX})(\bX) \Big].
\end{cases}
\end{align}
Moreover, the global BSDE $(Y,Z)$ of Equation \eqref{defYadjoint} can alternatively be formulated into a coupled system of $N $ forward backward. Starting from the terminal backward $(Y^{(N)},Z^{(N)})$ on $[t_{N-1}, t_N]$ given by
\begin{align}
\begin{cases}
    \d Y_t^{(N)} &=  -F_t^{(N)} \d t+ Z_t^{(N)} \d W_t, \quad t_{N-1} \leq t \leq t_N, \\
    Y_T^{(N)} &= \partial_{x_N} g(\bX, \mathbb{P}_{\bX}) + \tilde{\E} \big[ \partial_{\tilde{x}_N} \frac{\delta}{\delta \bm} g(\tilde{\bX}, \P_{\bX})(\bX) \big],
\end{cases}
\end{align}
we then proceed recursively  by backward induction  in time by solving $(Y^{(k)},Z^{(k)})$ on $[t_{k-1}, t_k]$, for $k \in \llbracket 1, N-1 \rrbracket$, we have 
\begin{align}
\begin{cases}
    \d Y_t{^{(k)}} &=  -F_t^{(k)} \d t + Z_t^{(k)} \d W_t, \quad t_{k-1} \leq t \leq t_{k}, \\
    Y_{t_{k}}^{(k)} &= Y_{t_{k}}^{(k+1)} + \xi_{k}^{(k+1)},
\end{cases}
\end{align}
where the process $(F_t^{(k)})_{t_{k-1} \leq t \leq t_k}$ is obtained by replacing $(Y_t,Z_t)$ in the first term of \eqref{eq : system_FBSDE} by $(Y_t^{(k)},Z_t^{(k)})$  and 
where $\xi_k^{(k+1)}$ is given by the second term \eqref{eq : system_FBSDE} where we replaced $(Y_t,Z_t)$  by $(Y_t^{(l+1)}, Z_t^{(l+1)})$ on each subinterval $t_{l} \leq t \leq t_{l+1}$,  for $k \leq l \leq N-1$.
Then, the global forward backward $(Y,Z)_{0 \leq t \leq T}$ is given by
\begin{align}
\begin{cases}
    Y_t &= \Sum_{k=1}^{N} \mathds{1}_{t_{k-1} \leq t < t_{k}} Y_{t}^{(k)} + \mathds{1}_{t=t_N} Y_{T}^{(N)}, \quad 0 \leq t \leq T, \quad \P-\text{a.s}, \\
    Z_t &= \Sum_{k=1}^{N}  \mathds{1}_{t_{k-1} \leq t < t_k} Z_t^{(k)}, \quad \d t \otimes \d \P-\text{a.e}.
\end{cases}
\end{align}

\end{Remark}

\section{Pontryagin principle for optimality}
\label{sec : pontryagin}
In this section, we discuss the necessary and sufficient conditions for optimality under suitable regularity assumptions on the Hamiltonian map $H$ and terminal condition map $g$. We start by introducing some regularity assumptions that will be used throughout this section and in the remainder of the paper. 

\begin{Assumption}\label{assumption : regularity_pontryagin_optimality}   

\noindent Assume the following holds:

\small

\begin{enumerate}
    \item [(1)] The functions $b$ and $\sigma$ are differentiable with respect to $(\bx,x,a)$ for any $t \in [0,T]$. Moreover, the maps $\partial_{\bx}(b,\sigma)$, $\partial_{x} (b,\sigma)$  and $\partial_a(b,\sigma)$ are assumed to be uniformly bounded and Lipschitz continuous and the maps
    \begin{align*}
            (\bx,\boldsymbol{\mu},x,\mu,a) \mapsto \partial_x(b,\sigma), \partial_{\bx}(b,\sigma), 
            \partial_a(b,\sigma)(t,\bx,\boldsymbol{\mu},x,\mu,a), 
    \end{align*}
    are assumed to be jointly continuous for any $t \in [0,T]$.
    \item [(2)] The functions $\phi$ $=$ $b,\sigma$ are assumed to have Fréchet differentiable linear functional derivatives $\partial_{\tilde{x}} \frac{\delta}{\delta m} \phi$ and $\partial_{\tilde{\bx}} \frac{\delta}{\delta \bm} \phi$, and there exist positive constants $L$ and $M$ such that        
\begin{align*}
\begin{cases}
\begin{aligned}
&\left|\partial_{\tilde{x}} \frac{\delta}{\delta m}
\phi(t,\bx,\boldsymbol{\mu},x,\mu,a)(\tilde{x})
-\partial_{\tilde{x}} \frac{\delta}{\delta m}
\phi(t,\bx',\boldsymbol{\mu}',x',\mu',a)(\tilde{x}')\right| \\
&\qquad \leq L\Big(
|\bx_{\eta(t)}-\bx'_{\eta(t)}|
+\Wc_2(\boldsymbol{\mu}_{\eta(t)},\boldsymbol{\mu}'_{\eta(t)})
+|x-x'| +\Wc_2(\mu,\mu')
+|\tilde{x}-\tilde{x}'|
\Big),
\end{aligned}
\\[0.5em]
\begin{aligned}
&\left|\partial_{\tilde{\bx}} \frac{\delta}{\delta \bm}
\phi(t,\bx,\boldsymbol{\mu},x,\mu,a)(\tilde{\bx})
-\partial_{\tilde{\bx}} \frac{\delta}{\delta \bm}
\phi(t,\bx',\boldsymbol{\mu}',x',\mu',a)(\tilde{\bx}')\right| \\
&\qquad \leq L\Big(
|\bx_{\eta(t)}-\bx'_{\eta(t)}|
+\Wc_2(\boldsymbol{\mu}_{\eta(t)},\boldsymbol{\mu}'_{\eta(t)})
+|x-x'|
+\Wc_2(\mu,\mu')
+|\tilde{\bx}_{\eta(t)}-\tilde{\bx}'_{\eta(t)}|
\Big),
\end{aligned}
\end{cases}
\end{align*}
    and
\begin{align*}
          &| \partial_{\tilde{x}} \tfrac{\delta}{\delta m} \phi(t, \bx ,\boldsymbol{\mu}, x,\mu, a)(\tilde{x})| + | \partial_{\tilde{\bx}} \tfrac{\delta}{\delta \bm} \phi(t, \bx ,\boldsymbol{\mu}, x,\mu, a)(\tilde{\bx}) | \;\leq\; M, 
      \end{align*}
        for any $t \in [0,T], \bx,\bx' \in (\R^d)^N$, $ \boldsymbol{\mu},\boldsymbol{\mu}' \in \Pc_2((\R^d)^N)$, $x, x' \in \R^d$, $\mu, \mu' \in \Pc_2(\R^d)$ and $a \in A$.
    
    \item [(3)] The functions $f$ and $g$ are assumed to be differentiable with respect to $(\bx,x,a)$ (resp. $\bx$), and  the maps
    \begin{align*}
        (\bx,\boldsymbol{\mu},x,\mu,a) &\mapsto \big( \partial_x f, 
        \partial_{\bx} f,  
    \partial_a f\big)(t,\bx,\boldsymbol{\mu},x,\mu,a), \quad
        (\bx,\boldsymbol{\mu}) \mapsto \partial_{\bx} g(\bx,\boldsymbol{\mu})
    \end{align*}
    are assumed to be jointly continuous for any $t \in [0,T]$ .
    Moreover, there exists a positive constant $L$ such that, for every $(t,\bx,\boldsymbol{\mu},x,\mu,a)$,
\begin{align*}
        \big( | \partial_{\bx} f | + | \partial_x f | + | \partial_a f | \big)(t,\bx,\boldsymbol{\mu},x,\mu,a) &\leq L \big( 1 + |\bx| + \Wc_2(\boldsymbol{\mu}, \boldsymbol{\delta}_0) + |x| + \Wc_2(\mu,\delta_0) + |a| \big), \\
        | \partial_{\bx} g(\bx,\boldsymbol{\mu}) | &\leq L \big( 1 + |\bx| + \Wc_2(\boldsymbol{\mu}, \boldsymbol{\delta}_0) \big).
    \end{align*}
    \item [(4)] The functions $f$ and $g$ are assumed to have Fréchet differentiable linear functional derivatives $\partial_{\tilde{x}} \frac{\delta}{\delta m} f$, $\partial_{\tilde{\bx}} \frac{\delta}{\delta \bm} f$ and $\partial_{\tilde{\bx}} \frac{\delta}{\delta \bm} g$. 
    Moreover, there exists a positive constant $L$ such that, for any $R \geq 0$, any $(t,\bx,\boldsymbol{\mu},x,\mu,a)$ with $|\bx| + \Wc_2(\boldsymbol{\mu},\boldsymbol{\delta}_0) + |x| + \Wc_2(\mu,\delta_0) + |a| \leq R$, and any $\tilde{X} \sim \mu$, $\tilde{\bX} \sim \boldsymbol{\mu}$,
\footnotesize{
\begin{align*}
          &\tilde{\E} \Big[ \big| \partial_{\tilde{x}} \tfrac{\delta}{\delta m}f(t,\bx,\boldsymbol{\mu},x,\mu,a)(\tilde{X})\big|^2 + \big| \partial_{\tilde{\bx}} \tfrac{\delta}{\delta \bm} f(t,\bx,\boldsymbol{\mu},x,\mu,a)(\tilde{\bX})\big|^2 + \big| \partial_{\tilde{\bx}} \tfrac{\delta}{\delta \bm} g(\bx,\boldsymbol{\mu})(\tilde{\bX})\big|^2 \Big] \;\leq\; L(1+R^2).
\end{align*}}
\end{enumerate}
\end{Assumption}

\subsection{A necessary condition}

Given $\alpha \in \Ac$ and an initial condition $\xi$, we denote by $X =X^{\alpha}$ the associated controlled state process solution to \eqref{eq : state_process_dynamics} and we  denote in the following  $\big(\boldsymbol{\theta}_t = (\bX, \P_{\bX},X_t,\P_{X_t},\alpha_t)\big)_{0 \leq t \leq T}$ . Moreover, given $\beta \in \Ac$, we define the variation process $V^{\beta}=V=(V_t)_{0 \leq t \leq T}$ as the solution to the following stochastic differential equation  valued in $\R^d$    
\small
\begin{align*}
\begin{cases}
       \d V_t &= \bigg[  \partial_{\bx} b(t,\boldsymbol{\theta}_t) \cdot  \bV+ \tilde{\E} \big[ \partial_{\tilde{\bx}} \frac{\delta}{\delta \bm} b(t, \boldsymbol{\theta}_t)(\tilde{\bX}) \cdot   \tilde{\bV} \big] \\
       &\qquad + \partial_{x} b(t,\boldsymbol{\theta}_t)  V_t + \tilde{\E} \big[ \partial_{\tilde{x}} \frac{\delta}{\delta m} b(t,\boldsymbol{\theta}_t)(\tilde{X}_t)  \tilde{V}_t \big] + \partial_a b(t,\boldsymbol{\theta}_t)   (\beta_t - \alpha_t)  \bigg] \d t ,\\
       &\quad + \bigg[  \partial_{\bx} \sigma(t,\boldsymbol{\theta}_t) \cdot  \bV + \tilde{\E} \big[ \partial_{\tilde{\bx}} \frac{\delta}{\delta \bm} \tilde{\sigma}(t,\boldsymbol{\theta}_t)(\tilde{\bX}) \cdot \tilde{\bV} \big] \\
       &\qquad +  \partial_{x} \sigma(t,\boldsymbol{\theta}_t)  V_t + \tilde{\E} \big[ \partial_{\tilde{x}} \frac{\delta}{\delta m} \sigma(t,\boldsymbol{\theta}_t)(\tilde{X}_t)  \tilde{V}_t \big]  +   \partial_{a} \sigma(t,\boldsymbol{\theta}_t) (\beta_t - \alpha_t) \bigg]  \d W_t, \\
    V_0 &= 0, 
\end{cases}
\end{align*}
where $\bV=(V_{t_1},\ldots,V_{t_N})$.
In fact, the pair process $(X,V)$ satisfies a SDE falling into the framework of \eqref{eq : state_process_dynamics} with control pair $(\alpha,\beta)$ and admissible initial condition $(\xi,0)$. Under Assumptions \ref{assumption : regularity_b_and_sigma} and \ref{assumption : regularity_pontryagin_optimality}, we get existence and uniqueness of $(X,V)$ hence $V$.
\begin{Lemma}\label{lemma : def_variation_process}
    Let $\epsilon \in (0,1)$ and $X^{\epsilon} = (X^{\epsilon}_t)_{0 \leq t \leq T}$ be the controlled state process with the control $\alpha^{\epsilon} := \big(\alpha_t + \epsilon (\beta_t- \alpha_t) \big)_{0 \leq t \leq T}$, Then, the following holds
    \normalfont 
    \begin{align*}
        \underset{\epsilon \to 0}{\text{ lim }} \E \Big[ \underset{0 \leq t \leq T}{\text{ sup }} |\frac{X_t^{\epsilon}-X_t}{\epsilon} - V_t|^2 \Big] = 0.
    \end{align*}
\end{Lemma}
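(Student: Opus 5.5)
We follow the classical argument of \cite{carmona_delarue_2015} (Lemma 4.1 there), adapted to the discrete path dependence. Set $\Delta^\epsilon_t := \frac{X^\epsilon_t - X_t}{\epsilon} - V_t$ and $\boldsymbol{\Delta}^\epsilon := (\Delta^\epsilon_{t_1},\ldots,\Delta^\epsilon_{t_N})$. We aim to show $\E[\sup_{t}|\Delta^\epsilon_t|^2]\to 0$.

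\emph{Step 1: a priori estimate.} Applying the stability estimate underlying Proposition \ref{thm : existence_unicity_X_SDE} to the difference of the two state equations, and using the Lipschitz property of Assumption \ref{assumption : regularity_b_and_sigma} (in particular $\Wc_2(\P_{\bX^\epsilon},\P_{\bX})^2 \le \E|\bX^\epsilon-\bX|^2$), we get
\[
\E\Big[\sup_t |X^\epsilon_t - X_t|^2\Big] \le C\epsilon^2\, \E\Big[\int_0^T|\beta_t-\alpha_t|^2\d t\Big].
\]
In particular $(X^\epsilon - X)/\epsilon$ is bounded in $\S^2$ uniformly in $\epsilon$.

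\emph{Step 2: Taylor expansion.} For $\phi=b,\sigma$, write $\theta^{\lambda,\epsilon}_t$ for the interpolation $\big(\bX+\lambda(\bX^\epsilon-\bX),\ \P_{\bX+\lambda(\bX^\epsilon-\bX)},\ X_t+\lambda(X^\epsilon_t-X_t),\ \ldots,\ \alpha_t+\lambda\epsilon(\beta_t-\alpha_t)\big)$. Along this path we apply the chain rule, using ordinary derivatives in $(\bx,x,a)$ and the linear functional derivatives for the measure arguments; the latter are differentiated along the random-variable interpolation, so the Lions-type formula $\tilde\E[\partial_{\tilde x}\frac{\delta}{\delta m}\phi(\cdot)(\tilde X^\lambda)\cdot(\tilde X^\epsilon-\tilde X)]$ applies. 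This yields
\[
\frac{\phi(t,\theta^\epsilon_t)-\phi(t,\theta_t)}{\epsilon} = \int_0^1 \Big[\partial_{\bx}\phi(\theta^{\lambda,\epsilon}_t)\cdot \tfrac{\bX^\epsilon-\bX}{\epsilon} + \tilde\E\big[\partial_{\tilde\bx}\tfrac{\delta}{\delta\bm}\phi(\theta^{\lambda,\epsilon}_t)(\tilde\bX^\lambda)\cdot\tfrac{\tilde\bX^\epsilon-\tilde\bX}{\epsilon}\big] + \cdots + \partial_a\phi(\theta^{\lambda,\epsilon}_t)(\beta_t-\alpha_t)\Big]\d\lambda .
\]
By $\Pi$-non-anticipativity, only the coordinates $x_1,\ldots,x_{\eta(t)}$ enter. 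We subtract the linear coefficients of $V$ and insert $\frac{X^\epsilon-X}{\epsilon}=\Delta^\epsilon+V$. The dynamics of $\Delta^\epsilon$ then become linear in $(\boldsymbol{\Delta}^\epsilon, \tilde{\boldsymbol\Delta}^\epsilon, \Delta^\epsilon_t, \tilde\Delta^\epsilon_t)$ with coefficients bounded by Assumption \ref{assumption : regularity_pontryagin_optimality}(1)--(2), plus a remainder $R^\epsilon_t$. This remainder consists of terms of the form
\[
\int_0^1\big[\partial\phi(\theta^{\lambda,\epsilon}_t)-\partial\phi(\theta_t)\big]\d\lambda \cdot \big(\bV,\ V_t,\ \beta_t-\alpha_t\big)
\]
and their measure analogues.

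\emph{Step 3: Gronwall over the grid.} Using BDG and the boundedness of the coefficients, and bounding $|\Delta^\epsilon_{t_i}|\le \sup_{s\le t}|\Delta^\epsilon_s|$ for $t_i\le t$ (this holds exactly because of non-anticipativity), we obtain
\[
\E\Big[\sup_{s\le t}|\Delta^\epsilon_s|^2\Big]\le C\int_0^t \E\Big[\sup_{r\le s}|\Delta^\epsilon_r|^2\Big]\d s + C\,\E\Big[\int_0^T |R^\epsilon_s|^2\d s\Big].
\]
The constant $C$ depends on $N$. Gronwall's lemma then reduces the claim to showing $\E\int_0^T|R^\epsilon_s|^2\d s\to 0$.

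\emph{Step 4: the remainder (main obstacle).} Consider first the state and control derivatives. Since $\theta^{\lambda,\epsilon}\to\theta$ in $L^2$ by Step 1, the joint continuity in (1) gives convergence in probability of $\partial\phi(\theta^{\lambda,\epsilon}_t)-\partial\phi(\theta_t)$ to $0$. These differences are bounded, and they multiply square-integrable factors $(\bV,V_t,\beta_t-\alpha_t)$, so dominated convergence applies (after passing to a subsequence if needed, then using a subsequence argument).

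For the measure derivatives, the Lipschitz bound in (2) gives directly
\[
\big|\partial_{\tilde\bx}\tfrac{\delta}{\delta\bm}\phi(\theta^{\lambda,\epsilon}_t)(\tilde\bX^\lambda)-\partial_{\tilde\bx}\tfrac{\delta}{\delta\bm}\phi(\theta_t)(\tilde\bX)\big| \le C\Big(|\bX^\epsilon-\bX|+|X^\epsilon_t-X_t|+\|\bX^\epsilon-\bX\|_{L^2}+|\tilde\bX^\epsilon-\tilde\bX|+\ldots\Big).
\]
This quantity is also bounded by $2M$. Multiplying by $\tilde\bV$ and taking $\tilde\E$, Cauchy--Schwarz combined with truncation at level $2M$ and Step 1 shows the term tends to $0$ in $L^2$. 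The delicate point is that the control $a$ is unbounded, so the continuity-in-$a$ argument must rely on the convergence in probability together with the uniform bound, rather than on a Lipschitz estimate in $a$. We handle this by dominated convergence, using the integrable dominating function $C(|\bV|^2+|V_t|^2+|\beta_t-\alpha_t|^2)$.
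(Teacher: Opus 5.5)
Your proposal is correct and follows essentially the same route as the paper's proof. The shared steps are: the interpolation $X+\lambda(X^\epsilon-X)$ (which the paper writes as $X+\lambda\epsilon(V^\epsilon+V)$); a first-order expansion giving a linear equation for $\Delta^\epsilon$ with bounded coefficients plus remainders $\int_0^1[\partial\phi(\boldsymbol{\theta}^{\lambda,\epsilon}_t)-\partial\phi(\boldsymbol{\theta}_t)]\,\d\lambda$ applied to $(\bV,V_t,\beta_t-\alpha_t)$; BDG and Gr\"onwall using $t_{\eta(t)}\le t$; and dominated convergence along subsequences for the remainders.

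The differences are cosmetic. You record an explicit $O(\epsilon)$ bound on $X^\epsilon-X$, and you use the Lipschitz bound of Assumption~\ref{assumption : regularity_pontryagin_optimality}(2) for the measure-derivative remainders where the paper appeals to continuity. Both arguments tacitly need continuity of $\partial_{\tilde x}\frac{\delta}{\delta m}\phi$ and $\partial_{\tilde{\bx}}\frac{\delta}{\delta\bm}\phi$ in the control variable $a$, which that assumption does not state.
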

\begin{proof}
Since $A$ is convex, we recall that $\alpha^{\epsilon} \in \Ac$. We denote by  $\boldsymbol{\theta}_{t}^{\epsilon} := (\bX^{\epsilon}, \P_{\bX^{\epsilon}}, X_t^{\epsilon},\P_{X_t^{\epsilon}}, \alpha_t + \epsilon (\beta_t - \alpha_t))$ for any $0 \leq t \leq T$ and we denote the process $V^{\epsilon}$ as $\big(V^{\epsilon}_t := \frac{1}{\epsilon}(X_t^{\epsilon} - X_t) - V_t\big)_{0 \leq t \leq T}$. Then, from Itô's dynamics, we have 
\begin{align}\label{eq : SDE_Vepsilon}
\begin{cases}
        \d V_t^{\epsilon} &= V_t^{\epsilon,1} \d t + V_t^{\epsilon,2} \d W_t, \\
        V_0^{\epsilon} &= 0,
\end{cases}
\end{align}
with
\begin{align*}
\begin{cases}
    V_t^{\epsilon,1} &= \frac{1}{\epsilon}  \big( b(t,\boldsymbol{\theta}_t^{\epsilon}) - b(t, \boldsymbol{\theta}_t) \big) -  \partial_{\bx}  b(t, \boldsymbol{\theta}_t) \cdot  \bV -  \tilde{\E} \big[ \partial_{\tilde{\bx}} \frac{\delta}{\delta \bm} b(t,\boldsymbol{\theta}_t)(\tilde{\bX})  \cdot \tilde{\bV} \big]  \\
    & \quad - \partial_{x} b(t,\boldsymbol{\theta}_t)  V_t - \tilde{\E} \big[ \partial_{\tilde{x}} \frac{\delta}{\delta m} b(t,\boldsymbol{\theta}_t)(\tilde{X}_t)  \tilde{V}_t \big] - \partial_a b(t,\boldsymbol{\theta}_t)   (\beta_t - \alpha_t),   \\
    V_t^{\epsilon,2} &=  \frac{1}{\epsilon}  \big( \sigma(t,\boldsymbol{\theta}_t^{\epsilon}) - \sigma(t, \boldsymbol{\theta}_t) \big) - \partial_{\bx}  \sigma(t,\boldsymbol{\theta}_t)  \cdot \bV -  \tilde{\E} \big[ \partial_{\tilde{\bx}} \frac{\delta}{\delta \bm} \sigma(t,\boldsymbol{\theta}_t)(\tilde{\bX}) \cdot \tilde{\bV} \big]  \\
    & \quad -  \partial_{x} \sigma(t,\boldsymbol{\theta}_t)  V_t - \tilde{\E} \big[ \partial_{\tilde{x}} \frac{\delta}{\delta m} \sigma(t,\boldsymbol{\theta}_t)(\tilde{X}_t)  \tilde{V}_t \big]  -  \partial_a \sigma(t,\boldsymbol{\theta}_t)   (\beta_t - \alpha_t).
\end{cases}
\end{align*}
We next define   the process $X^{\lambda,\epsilon}$, $\alpha^{\lambda,\epsilon}$ and $\boldsymbol{\theta}^{\lambda,\epsilon}$ as 
\begin{align*}
\begin{cases}
    X_t^{\lambda,\epsilon} &:= X_t + \lambda \epsilon (V_t^{\epsilon} + V_t) , \\
    \alpha_t^{\lambda,\epsilon} &:= \alpha_t + \lambda \epsilon (\beta_t - \alpha_t), \\
    \boldsymbol{\theta}_t^{\lambda,\epsilon} &:= (\bX^{\lambda,\epsilon}, \P_{\bX^{\lambda,\epsilon}}, X_t^{\lambda,\epsilon}, \P_{X_t^{\lambda,\epsilon}}, \alpha_t^{\lambda,\epsilon}),
\end{cases}
\end{align*}
for any $t \in [0,T]$ and any $\lambda \in [0,1]$. From the differentiability assumptions on $b$, it follows that 
\begin{align*}
    \frac{1}{\epsilon} \big[ b(t,\boldsymbol{\theta}_t^{\epsilon}) - b(t,\boldsymbol{\theta}_t) \big] = &\int_{0}^{1}  \partial_{\bx} b(t, \boldsymbol{\theta}_t^{\lambda,\epsilon}) \cdot (\bV^{\epsilon} + \bV) \d \lambda \\
    &\quad +  \int_{0}^{1} \partial_{a} b(t,\boldsymbol{\theta}_t^{\lambda,\epsilon}) (\beta_t- \alpha_t) \d \lambda  + \int_{0}^{1} \tilde{\E} \Big[  \partial_{\bx} \frac{\delta}{\delta \bm} b(t,\boldsymbol{\theta}_t^{\lambda,\epsilon})(\tilde{\bX}^{\lambda,\epsilon})  \cdot   \big( \tilde{\bV}^{\epsilon} + \tilde{\bV}\big) \Big] \d \lambda \\
    &\quad + \int_{0}^{1} \partial_x b(t,\boldsymbol{\theta}_t^{\lambda,\epsilon}) (V_t^{\epsilon} + V_t) \d \lambda + \int_{0}^{1} \tilde{\E} \big[ \partial_{\tilde{x}} \frac{\delta}{\delta m} b(t,\boldsymbol{\theta}_t^{\lambda,\epsilon})(\tilde{X}_t^{\lambda,\epsilon})  (\tilde{V}_t^{\epsilon} + \tilde{V}_t) \big] \d \lambda .
\end{align*}
Plugging this expression in $V_t^{\epsilon,1}$, we have
\small
\begin{align}\label{eq : def_V_epsilon_1}
    V_t^{\epsilon,1} &= \int_{0}^{1}  \partial_{\bx} b(t,\boldsymbol{\theta}_t^{\lambda,\epsilon}) \cdot \bV^{\epsilon} \d \lambda  + \int_{0}^{1} \tilde{\E} \Big[  \partial_{\tilde{\bx}} \frac{\delta}{\delta \bm} b(t,\boldsymbol{\theta}_t^{\lambda,\epsilon})(\tilde{\bX}^{\lambda,\epsilon}) \cdot  \tilde{\bV}^{\epsilon} \Big] \d \lambda  \\&\qquad+ \int_{0}^{1} \partial_x b(t,\boldsymbol{\theta}_t^{\lambda,\epsilon}) V_t^{\epsilon} \d \lambda  + \int_{0}^{1} \tilde{\E} \big[ \partial_{\tilde{x}} \frac{\delta}{\delta m} b(t,\boldsymbol{\theta}_t^{\lambda,\epsilon})(\tilde{X}_t^{\lambda,\epsilon}) \tilde{V}_t^{\epsilon} \big] \d \lambda  \\
    &\qquad+ I_t^{\epsilon,1} + I_t^{\epsilon,2} + I_t^{\epsilon,3} + I_t^{\epsilon,4} + I_t^{\epsilon,5},  \end{align}
where we denoted
\begin{align*}
\begin{cases}
    I_t^{\epsilon,1} := \int_{0}^{1} \Big( \partial_a b(t,\boldsymbol{\theta}_t^{\lambda,\epsilon}) - \partial_a b(t,\boldsymbol{\theta}_t) \Big) (\beta_t - \alpha_t) \d \lambda  , \\
    I_t^{\epsilon,2} := \int_{0}^{1} \Big( \partial_x b(t,\boldsymbol{\theta}_t^{\lambda,\epsilon}) - \partial_x b(t,\boldsymbol{\theta}_t) \Big) V_t \d \lambda , \\
    I_t^{\epsilon,3} :=  \int_{0}^{1}  \big( \partial_{\bx} b(t,\boldsymbol{\theta}_t^{\lambda,\epsilon}) - \partial_{\bx} b(t,\boldsymbol{\theta}_t) \big) \cdot \bV \d \lambda, \\
    I_t^{\epsilon,4} := \int_{0}^{1} \tilde{\E} \big[( \partial_{\tilde{x}} \frac{\delta}{\delta m} b(t,\boldsymbol{\theta}_t^{\lambda,\epsilon})(\tilde{X}_t^{\lambda,\epsilon}) - \partial_{\tilde{x}} \frac{\delta}{\delta m}b(t,\boldsymbol{\theta}_t)(\tilde{X}_t) \big) \tilde{V}_t \big] \d \lambda , \\
    I_t^{\epsilon,5} := \int_{0}^{1} \tilde{\E} \Big[ \big(\partial_{\tilde{\bx}} \frac{\delta}{\delta \bm}b(t,\boldsymbol{\theta}_t^{\lambda,\epsilon})(\tilde{\bX}^{\lambda,\epsilon}) - \partial_{\tilde{\bx}} \frac{\delta}{\delta \bm} b(t,\boldsymbol{\theta}_t)(\tilde{\bX}) \big) \cdot \tilde{\bV} \Big] \d \lambda .
\end{cases}
\end{align*}
Now, from \eqref{eq : SDE_Vepsilon}, and using Young's inequality, we have
\begin{align*}
    | V_t^{\epsilon}|^2 \leq  C \big( |\int_{0}^{t}  V_s^{\epsilon,1} \d s|^2 + |\int_{0}^{t} V_s^{\epsilon,2} \d W_s|^2 \big),
\end{align*}
where $C$ is a generic constant, depending on $T,N$ but not on $\epsilon$, which can vary from line to line. Taking the supremum and the expectation and then using Jensen's and BDG's inequality, we end up with
\begin{align}\label{eq : estimate_S2norm}
    \E \Big[ \underset{0 \leq t \leq r}{\text{ sup }} | V_t^{\epsilon}|^2 \Big] \leq C \Big( \int_{0}^{r} \E \big[ |V_t^{\epsilon,1}|^2 \big] \d t + \int_{0}^{r} \E \big[ | V_t^{\epsilon,2}|^2] \d t \Big),
\end{align}
for any $0 \leq r \leq T$. We now prove that $I^{\epsilon,1}, I^{\epsilon,2}, I^{\epsilon,3}, I^{\epsilon,4}$ and $I^{\epsilon,5}$ converge to $0$ in the space $L^2([0,T] \times \Omega)$, i.e.\ that
\begin{align*}
    \bar{C}_{\epsilon} := \sum_{i=1}^{5} \E \Big[ \int_{0}^{T} | I_t^{\epsilon,i}|^2 \d t \Big] \underset{\epsilon \to 0}{\longrightarrow} 0.
\end{align*}
First, since the coefficients are Lipschitz and $\alpha^{\epsilon} \to \alpha$ in $\H^2([0,T];\R^m)$, the standard stability estimate for the state equation gives $\E \big[ \sup_{0 \leq t \leq T} |X_t^{\epsilon} - X_t|^2 \big] \to 0$, so that, along any sequence $\epsilon_n \to 0$, there is a subsequence along which $\boldsymbol{\theta}_t^{\lambda,\epsilon} \to \boldsymbol{\theta}_t$ for a.e.\ $(t,\omega)$, uniformly in $\lambda \in [0,1]$. Since the maps $\partial_x b$, $\partial_a b$, $\partial_{\bx} b$, $\partial_{\tilde{x}} \frac{\delta}{\delta m} b$ and $\partial_{\tilde{\bx}} \frac{\delta}{\delta \bm} b$ are continuous, the integrands defining the $I^{\epsilon,i}$ then converge to $0$ pointwise along this subsequence and, these maps being uniformly bounded, they are dominated by $C \big( |V_t| + |\bV_{\eta(t)}| + |\beta_t - \alpha_t| \big)$, which is square integrable on $[0,T] \times \Omega$. The dominated convergence theorem therefore yields $\bar{C}_{\epsilon} \to 0$ along the subsequence and, the limit being $0$ regardless of the chosen sequence, along the whole family as $\epsilon \to 0$. Now, from the definition of $V^{\epsilon,1}$ in \eqref{eq : def_V_epsilon_1}, using the uniform boundedness of the maps $\partial_x b, \partial_{\bx} b$,  $\partial_{\tilde{x}} \frac{\delta}{\delta m} b$ and $\partial_{\tilde{\bx}} \frac{\delta}{\delta \bm} b$, together with $t_{\eta(t)} \leq t$ to bound $|\bV^{\epsilon}_{\eta(t)}|$ by the running supremum of $|V^{\epsilon}|$, we have
\begin{align*}
     \int_{0}^{r} \E \big[ |V_t^{\epsilon,1}|^2 \big] \d t   \leq C \bar{C}_{\epsilon} +  C \int_{0}^{r} \E \Big[ \underset{0 \leq s \leq t}{\text{ sup }} | V_s^{\epsilon}|^2 \Big] \d t, \quad \text{ for any $r \in [0,T]$.}
\end{align*}  The term $V^{\epsilon,2}$ is handled similarly using Jensen's and BDG's inequality and the similar assumptions on $\sigma$ from Assumption \ref{assumption : regularity_pontryagin_optimality} such that we end up with a similar estimate. Finally, plugging it into \eqref{eq : estimate_S2norm}, we have
\begin{align*}
     \E \Big[ \underset{0 \leq t \leq r}{\text{ sup }} | V_t^{\epsilon}|^2 \Big] \leq C \bar{C}_{\epsilon} + C \int_{0}^{r} \E \Big[ \underset{0 \leq s \leq t}{\text { sup }} |V_s^{\epsilon}|^2 \Big] \d  t,
\end{align*}
which is enough to conclude by applying Grönwall's Lemma to the map $r \mapsto \E \Big[ \underset{0 \leq t \leq r}{\text{ sup }} |V_t^{\epsilon}|^2 \Big]$ and sending $\epsilon \to 0$. 
\end{proof}
\begin{Lemma}\label{eq : Gateaux_derivative}
The cost functional map $J$ is Gâteaux differentiable in any $\alpha \in \Ac$ with Gâteaux derivative in the direction $\beta \in \Ac$ given by
\begin{align}\label{eq : lemma_variation_calculus}
    \underset{ \epsilon \to 0}{\text{ lim }} \frac{1}{\epsilon} \big( J(\alpha + \epsilon ( \beta- \alpha) ) - J(\alpha) \big) &= \E \bigg[ \int_{0}^{T} \Big(  \partial_{\bx} f(t,\boldsymbol{\theta}_t) \cdot \bV+  \partial_a f(t,\boldsymbol{\theta}_t) \cdot (\beta_t - \alpha_t)  \\
    &\qquad +\tilde{\E} \big[  \partial_{\tilde{\bx}} \frac{\delta}{\delta \bm} f(t,\boldsymbol{\theta}_t)(\tilde{\bX}) \cdot \tilde{\bV} \big] \Big) \d t \bigg]  \\
    &\qquad + \E  \bigg[ \int_{0}^{T} \Big(\partial_x f(t,\boldsymbol{\theta}_t) \cdot V_t  + \tilde{\E} \big[ \partial_{\tilde{x}} \frac{\delta}{\delta m} f(t,\boldsymbol{\theta}_t)(\tilde{X}_t) \cdot \tilde{V}_t \big] \Big) \d t \bigg]  \\
    &\qquad+ \E \bigg[  \partial_{\bx}g(\bX,\P_{\bX}) \cdot \bV  + \tilde{\E} \big[ \partial_{\tilde{\bx}} \frac{\delta}{\delta \bm} g(\bX,\P_{\bX})(\tilde{\bX}) \cdot \tilde{\bV} \big] \bigg].  \end{align}
\end{Lemma}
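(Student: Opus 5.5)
We reduce to Lemma \ref{lemma : def_variation_process} by writing the difference quotient of $J$ as a first-order Taylor expansion along the interpolation $\boldsymbol{\theta}^{\lambda,\epsilon}_t$ already used in that proof. Set $X^{\epsilon}$ as the state controlled by $\alpha^{\epsilon}=\alpha+\epsilon(\beta-\alpha)$, and $V^{\epsilon}_t := \frac{1}{\epsilon}(X^{\epsilon}_t-X_t)-V_t$. The plan is to split
\begin{align*}
\frac{1}{\epsilon}\big(J(\alpha^{\epsilon})-J(\alpha)\big)
= \E\Big[\int_0^T \tfrac{1}{\epsilon}\big(f(t,\boldsymbol{\theta}^{\epsilon}_t)-f(t,\boldsymbol{\theta}_t)\big)\d t\Big]
+ \frac{1}{\epsilon}\E\big[g(\bX^{\epsilon},\P_{\bX^{\epsilon}})-g(\bX,\P_{\bX})\big]
\end{align*}
and treat the running and terminal parts identically.

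First, for each fixed $(t,\omega)$, we write the increment of $f$ as an integral over $\lambda\in[0,1]$ of its derivatives at $\boldsymbol{\theta}^{\lambda,\epsilon}_t$. The $(\bx,x,a)$ part uses the classical chain rule. The measure arguments $\P_{\bX}$ and $\P_{X_t}$ use the linear functional derivative. To pass from $\frac{\delta}{\delta \bm}$ to the $\partial_{\tilde{\bx}}$ form, note that $\lambda\mapsto \P_{\bX^{\lambda,\epsilon}}$ is the law of a random variable moving linearly, so $\frac{d}{d\lambda}U(\P_{\bX^{\lambda,\epsilon}}) = \tilde{\E}[\partial_{\tilde{\bx}}\frac{\delta U}{\delta \bm}(\P_{\bX^{\lambda,\epsilon}})(\tilde{\bX}^{\lambda,\epsilon})\cdot \epsilon(\tilde{\bV}^{\epsilon}+\tilde{\bV})]$. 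This is the identity used implicitly for $b$ in the previous proof, and it is justified by Fréchet differentiability of the derivative together with the growth bound in Assumption \ref{assumption : regularity_pontryagin_optimality}(4). Non-anticipativity means only the coordinates $k\le\eta(t)$ contribute, consistent with \eqref{eq : def_derivative}. Dividing by $\epsilon$, we obtain the integrand of the claimed formula evaluated at $\boldsymbol{\theta}^{\lambda,\epsilon}_t$ and applied to $V^{\epsilon}+V$ and $\beta-\alpha$.

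Second, we subtract the claimed limit and bound two remainders.
\begin{itemize}
\item[(a)] Derivatives at $\boldsymbol{\theta}^{\lambda,\epsilon}$ applied to $V^{\epsilon}$. Cauchy--Schwarz handles this term using the linear growth of $\partial f,\partial g$ in Assumption \ref{assumption : regularity_pontryagin_optimality}(3)--(4). Proposition \ref{thm : existence_unicity_X_SDE} gives uniform-in-$\epsilon$ $\S^2$ bounds on $X^{\epsilon}$, hence on $\boldsymbol{\theta}^{\lambda,\epsilon}$. Lemma \ref{lemma : def_variation_process} then gives $\E[\sup_t|V^{\epsilon}_t|^2]\to 0$, so this remainder vanishes.
\item[(b)] The difference of derivatives at $\boldsymbol{\theta}^{\lambda,\epsilon}$ and at $\boldsymbol{\theta}$, applied to $V$ and $\beta-\alpha$.
\end{itemize}

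Remainder (b) is the main obstacle. Unlike the $b,\sigma$ case, the derivatives of $f,g$ are not bounded, only of linear growth, and the measure derivatives are controlled only in $L^2$. We therefore argue along subsequences, as in the previous proof, so that $\boldsymbol{\theta}^{\lambda,\epsilon}\to\boldsymbol{\theta}$ a.e., uniformly in $\lambda$, and in $\S^2$. Joint continuity makes the integrands converge pointwise. Instead of dominated convergence, we use uniform integrability: the products (derivative)$\times V$ are bounded in $L^1$ by $(1+|\boldsymbol{\theta}^{\lambda,\epsilon}|)|V|$, whose squares of the first factor converge in $L^1$ by $\S^2$ convergence. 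This gives Vitali convergence; equivalently, one can use the generalized dominated convergence theorem with dominating functions converging in $L^1$.

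For the $\tilde{\E}$ terms, we apply the same argument on the product space $\Omega\times\tilde\Omega$. The $L^2$ bound in Assumption \ref{assumption : regularity_pontryagin_optimality}(4) together with $\S^2$ convergence provides the required uniform integrability. Since every sequence has a subsequence with limit $0$, the whole family converges, which gives Gâteaux differentiability with the stated derivative.
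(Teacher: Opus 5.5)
Your proposal is correct and follows essentially the same route as the paper. The paper writes the difference quotient as the $\lambda$-integral of the derivatives of $f$ and $g$ along $\boldsymbol{\theta}^{\lambda,\epsilon}$, applied to $V^{\epsilon}+V$ and $\beta-\alpha$, and then passes to the limit using $\E[\sup_t |V^{\epsilon}_t|^2]\to 0$ together with the argument of Lemma 6.11 in Carmona--Delarue. Your split into remainders (a) and (b), handled respectively by Cauchy--Schwarz and by a subsequence plus uniform-integrability (Vitali) argument, is exactly the detail the paper delegates to that reference; like the paper, it implicitly relies on joint continuity of the measure derivatives of $f$ and $g$ to get the pointwise convergence in (b).
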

\begin{proof}
    We use the same notations as used in the proof of Lemma \ref{lemma : def_variation_process}, i.e.
\begin{align*}
\begin{cases}
    V_t^{\epsilon} &:= \frac{1}{\epsilon} \big( X_t^{\epsilon} - X_t) - V_t, \\
    \alpha_t^{\epsilon} &:= \alpha_t + \epsilon(\beta_t - \alpha_t), \\
    \bX^{\epsilon} &:= (X_{t_1}^{\epsilon},\ldots, X_{t_N}^{\epsilon}), \\
    \boldsymbol{\theta}^{\epsilon}_t &:=(\bX^{\epsilon},\P_{\bX^{\epsilon}}, X_t^{\epsilon}, \P_{X_t^{\epsilon}}, \alpha_t^{\epsilon} ), \\
    \bX^{\lambda,\epsilon} &:=  (X_{t_1}^{\lambda,\epsilon}, \ldots, X_{t_N}^{\lambda,\epsilon}),\\
    \alpha_t^{\lambda,\epsilon} &:= \alpha_t + \lambda \epsilon (\beta_t - \alpha_t), \\
\end{cases}
\end{align*}
for any $\lambda \in [0,1]$, $t \in [0,T]$ and where $X^{\epsilon}$ is the controlled state process by $\alpha^{\epsilon}$.
By definition of the cost functional $J$, we have 
\begin{align*}
    \underset{\epsilon \to 0}{\text{ lim }} \frac{1}{\epsilon} &\big( J\big(\alpha + \epsilon (\beta- \alpha) \big) - J(\alpha) \big) =   \underset{\epsilon \to 0}{\text{ lim }} \frac{1}{\epsilon} \E \Big[  \int_{0}^{T} \big[f(t,\boldsymbol{\theta}_t^{\epsilon}) - f(t,\boldsymbol{\theta}_t) \big] \d t + g(\bX^{\epsilon}, \P_{\bX^{\epsilon}}) - g(\bX, \P_{\bX}) \Big].
\end{align*}
We therefore have similarly to the computation done in the proof of Lemma \ref{lemma : def_variation_process}
\begin{align*}
    \frac{1}{\epsilon} \big( J\big(\alpha + \epsilon (\beta&- \alpha) \big) - J(\alpha) \big)  \\=  \E \bigg[ &\int_{0}^{T} \int_{0}^{1} \Big( \partial_{\bx} f(t,\boldsymbol{\theta}_t^{\lambda,\epsilon}) \cdot (\bV^{\epsilon} + \bV) + \partial_{x} f(t,\boldsymbol{\theta}_t^{\lambda,\epsilon}) \cdot (V_t^{\epsilon} + V_t) + \partial_{a} f(t,\boldsymbol{\theta}_t^{\lambda,\epsilon}) \cdot (\beta_t-\alpha_t) \\
    &\quad\quad\quad + \tilde{\E} \Big[  \partial_{\tilde{\bx}} \frac{\delta}{\delta \bm}f(t,\boldsymbol{\theta}_t^{\lambda,\epsilon})(\tilde{\bX}^{\lambda,\epsilon}) \cdot (\tilde{\bV}^{\epsilon} + \tilde{\bV}) \Big] + \tilde{\E} \Big[ \partial_{\tilde{x}} \frac{\delta}{\delta m} f(t,\boldsymbol{\theta}_t^{\lambda,\epsilon})(\tilde{X}_t^{\lambda,\epsilon}) \cdot (\tilde{V}_t^{\epsilon} + \tilde{V}_t) \Big] \Big) \d \lambda \d t \\
    &\quad + \int_{0}^{1} \Big( \partial_{\bx} g( \bX^{\lambda,\epsilon}, \P_{\bX^{\lambda,\epsilon}}) \cdot (\bV^{\epsilon} + \bV) + \tilde{\E} \Big[ \partial_{\tilde{\bx}} \frac{\delta}{\delta \bm} g(\bX^{\lambda,\epsilon} , \P_{\bX^{\lambda,\epsilon}})(\tilde{\bX}^{\lambda,\epsilon}) \cdot ( \tilde{\bV}^{\epsilon} + \tilde{\bV}) \Big] \Big) \d \lambda \bigg].
\end{align*}
Now, using the uniform convergence $\lVert \bV^{\epsilon} \rVert^2_{\S^2([0,T];\R^d)} \underset{\epsilon \to 0}{\to }0$ from Lemma \ref{lemma : def_variation_process} and the assumptions on $f$ and $g$ in Assumption \ref{assumption : regularity_pontryagin_optimality}, we conclude similarly to \cite[Lemma 6.11]{carmona2018probabilistic_I}.
\end{proof}
\begin{Lemma}\label{lemma : intermediate_computation}
    Let $(Y,Z)=(Y_t,Z_t)_{0 \leq t \leq T}$ be an adjoint process solution to Definition \ref{def : adjoint_processes}. We then have 
    \begin{align*}
        \sum_{i=1}^{N-1} \E \big[ - \Delta Y_{t_i} \cdot V_{t_i} \big] + \E \big[ Y_{t_N} \cdot V_{t_N} \big] &= \E \bigg[ \int_{0}^{T}  \bigg(\partial_{\bx} H(t,\boldsymbol{\Theta}_t) \cdot \bV + \tilde{\E} \big[ \partial_{\tilde{\bx}} \frac{\delta}{\delta \bm}H(t,\tilde{\boldsymbol{\Theta}}_t)(\bX) \cdot \bV \big]  \bigg) \d t  \bigg] \\
        &\qquad +  \E \bigg[  \partial_{\bx}g(\bX,\P_{\bX}) \cdot \bV + \tilde{\E} \big[  \partial_{\tilde{\bx}} \frac{\delta}{\delta \bm}g(\tilde{\bX},\P_{\bX})(\bX) \cdot \bV \big] \bigg],
    \end{align*}
    where we denote the process $\boldsymbol{\Theta}$ as $\big(\boldsymbol{\Theta}_t := (\bX, \P_{\bX}, X_t, \P_{X_t}, Y_t,Z_t,\alpha_t)\big)_{0 \leq t \leq T}$ and independent copy $\tilde{\boldsymbol{\Theta}}$ defined on another probability space $(\tilde{\Omega},\tilde{\Fc},\tilde{\P})$.
\end{Lemma}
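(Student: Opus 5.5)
The identity follows from the definition \eqref{defYadjoint} of the jumps together with the tower property; no It\^o calculus is needed at this stage. I would substitute the expression of $-\Delta Y_{t_i}$ from Definition \ref{def : adjoint_processes} for $i\in\llbracket 1,N-1\rrbracket$, and of $Y_{t_N}$, into the left-hand side. Then I would identify the result term by term with the right-hand side, read coordinate by coordinate along the grid.

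\emph{Step 1 (removing the conditional expectations).} Fix $i \in \llbracket 1, N-1\rrbracket$. The random variable $V_{t_i}$ is $\Fc_{t_i}$-measurable and square integrable, since $V\in\S^2([0,T];\R^d)$. Denote by $\xi_i$ the conditioned quantity in \eqref{eq : system_FBSDE}. Its square integrability follows from three facts.
\begin{itemize}
\item $\partial_{x_i}(b,\sigma)$ and $\partial_{\tilde x_i}\frac{\delta}{\delta \bm}(b,\sigma)$ are bounded (Assumption \ref{assumption : regularity_pontryagin_optimality}(1)--(2)), and $(Y,Z)\in\S^2\times\H^2$.
\item The derivatives of $f$ and $g$ have linear growth, and the corresponding $\tilde\E$-terms are $L^2$-bounded (Assumption \ref{assumption : regularity_pontryagin_optimality}(3)--(4)).
\item $X\in\S^2$ and $\alpha\in\H^2$ (Proposition \ref{thm : existence_unicity_X_SDE}).
\end{itemize}
By the tower property,
\begin{align*}
\E\big[-\Delta Y_{t_i}\cdot V_{t_i}\big] = \E\Big[\int_{t_i}^T\Big(\partial_{x_i}H(t,\boldsymbol{\Theta}_t)+\tilde\E\big[\partial_{\tilde x_i}\tfrac{\delta}{\delta\bm}H(t,\tilde{\boldsymbol{\Theta}}_t)(\bX)\big]\Big)\cdot V_{t_i}\,\d t + \Big(\partial_{x_i}g(\bX,\P_{\bX})+\tilde\E\big[\partial_{\tilde x_i}\tfrac{\delta}{\delta\bm}g(\tilde\bX,\P_{\bX})(\bX)\big]\Big)\cdot V_{t_i}\Big],
\end{align*}
where Fubini's theorem is used to move $V_{t_i}$ inside the $\d t$-integral; this is justified by the above integrability and Cauchy--Schwarz. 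The terminal term $\E[Y_{t_N}\cdot V_{t_N}]$ gives exactly the $i=N$ contributions of $g$.

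\emph{Step 2 (extending time integrals and summing).} By the $\Pi$-non-anticipativity of $H$ (inherited from $b,\sigma,f$) and the Remark following Definition \ref{def : non_anticipative}, we have $\partial_{x_i}H(t,\cdot)=0$ and $\partial_{\tilde x_i}\frac{\delta}{\delta\bm}H(t,\cdot)=0$ for $t<t_i$. This holds because $H$ depends on $(\bx,\boldsymbol{\mu})$ only through $(\bx_{\eta(t)},\boldsymbol{\mu}_{\eta(t)})$. Hence $\int_{t_i}^T$ may be replaced by $\int_0^T$. In addition, the $i=N$ coordinate contributes nothing to $H$ on $[0,T)$, a Lebesgue-null set being irrelevant. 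Summing over $i\in\llbracket 1,N\rrbracket$ and using the convention \eqref{eq : def_derivative}, namely $\partial_{\bx}H\cdot\bV=\sum_{i\le\eta(t)}\partial_{x_i}H\,V_{t_i}$, together with its analogue for the measure derivative, the $H$-terms become the first line of the right-hand side. The $g$-terms become $\E[\partial_{\bx}g\cdot\bV+\tilde\E[\partial_{\tilde\bx}\frac{\delta}{\delta\bm}g(\tilde\bX,\P_{\bX})(\bX)\cdot\bV]]$.

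\emph{Main obstacle.} The only delicate point is the justification of Step 1: the integrability needed for Fubini, and for exchanging $\E[\cdot\,|\,\Fc_{t_i}]$ with the product by $V_{t_i}$. For the $\tilde\E$-terms I would use Jensen's inequality $|\tilde\E[\cdot]|^2\le\tilde\E[|\cdot|^2]$ and the independence of the copy. With the stated bounds, every integrand is dominated by a square-integrable variable multiplied by $\sup_t|V_t|\in L^2$. The bookkeeping of the vanishing partial derivatives is then purely notational.
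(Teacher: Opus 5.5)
Your proposal is correct and follows the paper's argument. You substitute the jump and terminal conditions from Definition~\ref{def : adjoint_processes}, remove the conditional expectations by the tower property against the $\Fc_{t_i}$-measurable $V_{t_i}$, and regroup $\sum_{i=1}^{N-1}\int_{t_i}^T$ as $\int_0^T\sum_{i\le\eta(t)}$ via non-anticipativity, which the paper phrases as $i\le\eta(t)\iff t\ge t_i$. Your integrability check and your treatment of the null set $\{t=T\}$, where $\eta(T)=N$, only make explicit details the paper leaves implicit.
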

\begin{proof}
This follows from the definition of the jumps $(\Delta Y_{t_i})_{1 \leq i \leq N-1}$ in Definition \ref{def : adjoint_processes}, after noticing that $i \leq \eta(t)$ is equivalent to $t \geq t_i$. Indeed, we notice that
\begin{align}
    \sum_{i=1}^{N-1} \E \Big[ \int_{t_i}^{T} \partial_{x_i}H(t,\Theta_t)  V_{t_i} \d t \Big] &=  \E\Big[ \int_{0}^{T} \sum_{i=1}^{N-1} \mathds{1}_{\{t \geq t_i\}} \partial_{x_i} H(t,\Theta_t)  V_{t_i} \d t \Big] \\
    &=\E \Big[ \int_{0}^{T} \sum_{i=1}^{\eta(t)} \partial_{x_i} H(t,\Theta_t)  V_{t_i} \d t \Big] \\
    &= \E \Big[ \int_{0}^{T} \partial_{\boldsymbol{x}} H(t,\Theta_t) \cdot \boldsymbol{V} \d t \Big],
\end{align}
recalling the notation in \eqref{eq : def_derivative}. 
Proceeding similarly for the other term yields the result.

\end{proof}

\begin{Corollary}\label{corollary:calculus_of_variations}
    We have
\normalfont
{\footnotesize\begin{align*}
     \underset{\epsilon \to 0}{\text{ lim }} \frac{1}{\epsilon} \big( J\big(\alpha + \epsilon (\beta- \alpha) \big) - J(\alpha) \big) = \E \Big[ \int_{0}^{T} \partial_a H(t, \bX, \P_{\bX},X_t,\P_{X_t},Y_t,Z_t,\alpha_t) \cdot (\beta_t- \alpha_t) \d t \Big].
\end{align*}}
\end{Corollary}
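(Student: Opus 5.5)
The plan is the standard duality argument: apply Itô's product rule to $Y_t\cdot V_t$ on each interval between consecutive grid points, sum over the intervals, and compare the result with Lemma \ref{eq : Gateaux_derivative} and Lemma \ref{lemma : intermediate_computation}. Since $V$ is continuous and $Y$ is càdlàg with jumps only at $t_1,\ldots,t_{N-1}$, the product $Y\cdot V$ has jumps $\Delta Y_{t_i}\cdot V_{t_i}$ and no covariation jump term. Using $V_0=0$, I therefore expect
\begin{align*}
\E\big[Y_{t_N}\cdot V_{t_N}\big] = \sum_{i=1}^{N-1}\E\big[\Delta Y_{t_i}\cdot V_{t_i}\big] + \E\Big[\int_0^T \big( Y_t\cdot \d V_t + V_t\cdot \d Y_t + \d\langle Y,V\rangle_t\big)\Big].
\end{align*}
Moving the jump sum to the left gives exactly the left-hand side of Lemma \ref{lemma : intermediate_computation}.

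Next I expand the integrand using the dynamics of $V$ and the continuous part of \eqref{defYadjoint}. The contributions $Y_t\cdot \partial_x b\,V_t + Z_t\cdot \partial_x\sigma\, V_t - V_t\cdot \partial_x H$ cancel, leaving $-\partial_x f\cdot V_t$. The $\bV$-terms give $\partial_{\bx}H\cdot\bV - \partial_{\bx}f\cdot \bV$. The $a$-terms give $(\partial_a H-\partial_a f)\cdot(\beta_t-\alpha_t)$.

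For the measure terms I use Fubini together with the symmetry between $(\Omega,\P)$ and the independent copy $(\tilde\Omega,\tilde\P)$. This lets me rewrite
$\E[Y_t\cdot\tilde\E[\partial_{\tilde x}\tfrac{\delta}{\delta m}b(t,\boldsymbol\theta_t)(\tilde X_t)\tilde V_t]]$ as $\E[\tilde\E[\tilde Y_t\cdot \partial_{\tilde x}\tfrac{\delta}{\delta m} b(t,\tilde{\boldsymbol\theta}_t)(X_t) V_t]]$. It then cancels against the $\tilde\E[\partial_{\tilde x}\frac{\delta}{\delta m}H(\tilde{\boldsymbol\Theta}_t)(X_t)]\cdot V_t$ part of the driver, up to the $f$-contribution. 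The same swap applied to the $\tfrac{\delta}{\delta\bm}$ terms produces $\tilde\E[\partial_{\tilde\bx}\tfrac{\delta}{\delta\bm}H(t,\tilde{\boldsymbol\Theta}_t)(\bX)\cdot\bV]$ minus the corresponding $f$-term, again after swapping.

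Substituting everything into Lemma \ref{lemma : intermediate_computation}, the $\partial_{\bx}H$ and $\tfrac{\delta}{\delta\bm}H$ integrals cancel from both sides. What remains is
\begin{align*}
\E\Big[\partial_{\bx}g\cdot\bV+\tilde\E\big[\partial_{\tilde\bx}\tfrac{\delta}{\delta\bm}g(\tilde\bX,\P_{\bX})(\bX)\cdot\bV\big]\Big] = \E\Big[\int_0^T\big(\partial_aH\cdot(\beta_t-\alpha_t) - \text{all } f\text{-derivative terms}\big)\d t\Big].
\end{align*}
Swapping the roles of the two copies once more in the $g$-term matches it with the $g$-part of Lemma \ref{eq : Gateaux_derivative}. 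Rearranging, the right-hand side of Lemma \ref{eq : Gateaux_derivative} equals $\E[\int_0^T\partial_a H\cdot(\beta_t-\alpha_t)\d t]$, which is the claim.

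The main obstacle is justifying that the stochastic integrals $\int Y\cdot(\ldots)\d W$ and $\int V\cdot Z\,\d W$ have zero expectation. They are a priori only local martingales. I plan to handle this by BDG: $\E[\sup|Y|(\int|\text{diffusion of }V|^2)^{1/2}]$ and $\E[\sup|V|(\int|Z|^2)^{1/2}]$ are finite by Cauchy–Schwarz, since $Y,V\in\S^2$, $Z\in\H^2$, and the diffusion coefficient of $V$ is in $\H^2$ by the boundedness of the derivatives in Assumption \ref{assumption : regularity_pontryagin_optimality}. So these are true martingales. The integrability of the $f$-terms follows from the integrability condition in Definition \ref{def : adjoint_processes}, and the Fubini swaps are legitimate by the same bounds.
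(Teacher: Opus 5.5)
Your proposal is correct and follows essentially the same route as the paper. The paper also applies It\^o's product rule to $Y\cdot V$ on each subinterval $[t_k,t_{k+1})$, takes the left limit, adds the jumps $\Delta Y_{t_k}\cdot V_{t_k}$ to obtain \eqref{eq : identity_jumps}, expands $A_t,C_t,D_t$ with the independent-copy swaps, and compares with Lemmas \ref{lemma : intermediate_computation} and \ref{eq : Gateaux_derivative}. Your BDG argument that the stochastic integrals are true martingales is a useful addition, since the paper takes expectations without justifying this step.
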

\begin{proof}
Let the processes $\boldsymbol{\theta}$, $\tilde{\boldsymbol{\theta}}$, $\boldsymbol{\Theta}$ and $\tilde{\boldsymbol{\Theta}}$ defined as
\begin{align*}
\begin{cases}
    \boldsymbol{\theta}_t:= (\bX, \P_{\bX}, X_t, \P_{X_t}, \alpha_t) ,\\
    \tilde{\boldsymbol{\theta}}_t := (\tilde{\bX}, \P_{\bX}, \tilde{X}_t, \P_{X_t}, \tilde{\alpha}_t), \\
    \boldsymbol{\Theta}_t := (\bX, \P_{\bX}, X_t, \P_{X_t}, Y_t, Z_t, \alpha_t), \\
    \tilde{\boldsymbol{\Theta}}_t :=(\tilde{\bX}, \P_{\bX}, \tilde{X}_t, \P_{X_t}, \tilde{Y}_t, \tilde{Z}_t, \tilde{\alpha}_t),
\end{cases}
\end{align*}
for any $t \in [0,T]$ and  where $(\tilde{X},\tilde{\alpha}, \tilde{Y},\tilde{Z})$ is an independent copy of $(X,\alpha,Y,Z)$ defined on another probability space $(\tilde{\Omega}, \tilde{\Fc}, \tilde{\P})$. We recall that $V$ is a continuous process while $Y$ has jumps at deterministic times $(t_k)_{1 \leq k \leq N-1}$. For convenience, we denote
\begin{align*}
\begin{cases}
   \d Y_t &= - A_t \d t + Z_t \d W_t, \quad t_{k-1} \leq t < t_k, \quad k \in \llbracket 1, N-1 \rrbracket, \quad \text{ and $t_{N-1} \leq t \leq t_N$,} \\
    \d V_t &= C_t \d t + D_t \d W_t
\end{cases}
\end{align*}
where
\begin{align}\label{eq : def_dynamics_variation_process}
\begin{cases}
    A_t &= \partial_{x} H(t,\boldsymbol{\Theta}_t) + \tilde{\E} \Big[ \partial_{\tilde{x}} \frac{\delta}{\delta m} H(t,\boldsymbol{\tilde{\Theta}_t})(X_t) \Big], \\
    C_t &=   \partial_{\bx}  b(t, \boldsymbol{\theta}_t) \cdot  \bV+ \tilde{\E} \big[ \partial_{\tilde{\bx}} \frac{\delta}{\delta \bm} b(t,\boldsymbol{\theta}_t)(\tilde{\bX}) \cdot \tilde{\bV} \big] \\
    &\qquad + \partial_{x} b(t,\boldsymbol{\theta}_t)  V_t + \tilde{\E} \big[ \partial_{\tilde{x}} \frac{\delta}{\delta m} b(t,\boldsymbol{\theta}_t)(\tilde{X}_t)  \tilde{V}_t \big] \\
    &\qquad+ \partial_a b(t,\boldsymbol{\theta}_t)   (\beta_t - \alpha_t), \\
    D_t &=   \partial_{\bx} \sigma(t,\boldsymbol{\theta}_t) \cdot \bV + \tilde{\E} \Big[ \partial_{\tilde{\bx}} \frac{\delta}{\delta \bm} \sigma(t,\boldsymbol{\theta}_t) (\tilde{\bX}) \cdot\tilde{\bV} \Big] \\
    &\qquad +  \partial_{x} \sigma(t,\boldsymbol{\theta}_t)  V_t + \tilde{\E} \big[ \partial_{\tilde{x}} \frac{\delta}{\delta m} \sigma(t,\boldsymbol{\theta}_t)(\tilde{X}_t)  \tilde{V}_t \big] \\
    &\qquad+ \partial_{a} \sigma(t,\boldsymbol{\theta}_t) (\beta_t - \alpha_t).
\end{cases}
\end{align}
Applying It\^o's formula for semi-martingales  to the product process $Y\cdot V$ on each deterministic interval $t_k \leq t < t_{k+1}$, on which $Y$ has no jumps and $V$ is continuous, gives
\begin{align*}
    Y_t \cdot  V_t &= Y_{t_k} \cdot V_{t_k}  + \int_{t_k}^{t} V_s \cdot \d Y_s + \int_{t_k}^{t} Y_s \cdot \d V_s + \int_{t_k}^{t}  Z_s \cdot D_s \d s  \\
    &= Y_{t_k} \cdot V_{t_k} + \int_{t_k}^{t} \Big( - A_s \cdot V_s +  Y_s \cdot C_s + Z_s \cdot D_s \Big) \d s  + \int_{t_k}^{t}  \Big( V_s \cdot  (Z_s \d W_s) + Y_s  \cdot (D_s  \d W_s) \Big).
\end{align*}
Letting $t \nearrow t_{k+1}$ and using the continuity in time of the process $V$, together with the continuity of the maps $t \mapsto \int_{t_k}^{t} (- A_s \cdot V_s +Y_s \cdot C_s + Z_s \cdot D_s) \d s $  and $t \mapsto \int_{t_k}^{t} \big(V_s \cdot (Z_s \d W_s)+ Y_s \cdot( D_s \d W_s) \big)$, we can write
\begin{align*}
    Y_{t_{k+1}^{-}} \cdot V_{t_{k+1}} &= Y_{t_k} \cdot V_{t_k} + \int_{t_k}^{t_{k+1}} ( - A_s \cdot V_s +Y_s \cdot  C_s + Z_s \cdot D_s) \d s  + \int_{t_k}^{t_{k+1}} \big(V_s  \cdot (Z_s \d W_s) +Y_s \cdot (D_s  \d W_s) \big). 
\end{align*}
Since $Y_{t_k} = (Y_{t_k}^{-} + \Delta Y_{t_k})$ for any $k \in \llbracket 1, N-1 \rrbracket$, we therefore have
{\small\begin{align*}
    Y_{t_{k+1}} \cdot V_{t_{k+1}} &= Y_{t_k} \cdot  V_{t_k} + \int_{t_k}^{t_{k+1}} (- A_s \cdot V_s +  Y_s \cdot  C_s + Z_s \cdot D_s ) \d s  + \int_{t_k}^{t_{k+1}} \Big( V_s \cdot  (Z_s \d W_s) + Y_s  \cdot (D_s  \d W_s) \Big) + \Delta Y_{t_{k+1}} \cdot  V_{t_{k+1}}.
\end{align*}}
Now, taking the sum from $k=0$ to $N-1$ and since $\Delta Y_{t_N} =0$ (no jumps), we have
\begin{align*}
   Y_{T} \cdot V_{T} &=  Y_0 \cdot  V_0 + \int_{0}^{T} \big( - A_s \cdot V_s +  Y_s\cdot  C_s + Z_s \cdot D_s \big) \d s + \int_{0}^{T} \Big( V_s \cdot  (Z_s \d W_s) + Y_s  \cdot (D_s  \d W_s) \Big) + \sum_{k=1}^{N-1} \Delta Y_{t_k} \cdot  V_{t_k}.
\end{align*}
Recalling that $V_0=0$, and taking the expectation, we have
\begin{align}\label{eq : identity_jumps}
   -\sum_{k=1}^{N-1} \E \big[ \Delta Y_{t_k} \cdot V_{t_k}  \big]  + \E \big[ Y_{T} \cdot V_T \big] = \E \Big[ \int_{0}^{T} (- A_t \cdot V_t +  Y_t \cdot C_t + Z_t \cdot D_t) \d t \Big].
\end{align}
Now, from Lemma \ref{lemma : intermediate_computation} and recalling the definitions of the processes $A,C$ and $D$ in \eqref{eq : def_dynamics_variation_process}, we have
\begin{equation}
    \label{eq : intermediate_computation_SMP}\begin{split}
 &\E \Bigg[ \int_{0}^{T} \bigg( \Big( \partial_{\bx} f(t,\boldsymbol{\theta}_t) + \tilde{\E} \big[ \partial_{\tilde{\bx}} \frac{\delta}{\delta \bm} f(t,\tilde{\boldsymbol{\theta}}_t)(\bX) \big]\Big)  \cdot \bV + \Big( \partial_{x} f(t,\boldsymbol{\theta}_t) + \tilde{\E} \big[ \partial_{\tilde{x}} \frac{\delta}{\delta m} f(t,\tilde{\boldsymbol{\theta}}_t)(X_t) \big] \Big) \cdot V_t  \bigg) \d t \Bigg] \\
        &\quad +  \E \bigg[  \partial_{\bx}g(\bX,\P_{\bX}) \cdot \bV  + \tilde{\E} \big[ \partial_{\tilde{\bx}} \frac{\delta}{\delta \bm}g(\tilde{\bX},\P_{\bX})(\bX) \cdot \bV \big] \bigg]  \\
        &= \sum_{i=1}^{N-1} \E \big[ - \Delta Y_{t_i} \cdot V_{t_i} \big] + \E \big[ Y_T \cdot  V_T \big] + \E \Bigg[ \int_{0}^{T} \bigg( \partial_{x} f(t,\boldsymbol{\theta}_t) + \tilde{\E} \big[ \partial_{\tilde{x}} \frac{\delta}{\delta m} f(t,\tilde{\boldsymbol{\theta}}_t)(X_t) \big] \bigg) \cdot V_t  \d t  \Bigg]  \\
        &\quad - \E \Big[ \int_{0}^{T}  Y_t \cdot \big(\partial_{\bx}  b(t,\boldsymbol{\theta}_t) \cdot \bV \big)  +  \tilde{\E} \big[ Y_t \cdot \big( \partial_{\tilde{\bx}} \frac{\delta}{\delta \bm}b(t,\boldsymbol{\theta}_t)(\tilde{\bX})  \cdot \tilde{\bV} \big) \big] \d t \Big]  \\
        & \quad - \E \Big[ \int_{0}^{T}  Z_t \cdot \big(\partial_{\bx}  \sigma(t,\boldsymbol{\theta}_t) \cdot  \bV \big)+  \tilde{\E} \big[ Z_t \cdot \big( \partial_{\tilde{\bx}} \frac{\delta}{\delta \bm} \sigma(t,\boldsymbol{\theta}_t)(\tilde{\bX})  \cdot \tilde{\bV}\big) \big] \d t \Big]  \\
        &= \E \bigg[ \int_{0}^{T} \Big(  Y_t \cdot (\partial_{a} b(t,\boldsymbol{\theta}_t)    (\beta_t - \alpha_t)) + Z_t \cdot (\partial_{a} \sigma(t,\boldsymbol{\theta}_t)  (\beta_t - \alpha_t))  \Big) \d t \bigg],  \end{split}\end{equation}
where we used in the last equality the identity \eqref{eq : identity_jumps}.
Now plugging \eqref{eq : intermediate_computation_SMP} into \eqref{eq : lemma_variation_calculus}  and using Lemma \ref{eq : Gateaux_derivative}gives
{\footnotesize\begin{align*}
     \underset{ \epsilon \to 0}{\text{ lim }} \frac{1}{\epsilon} \big( J(\alpha + \epsilon ( \beta- \alpha) ) - J(\alpha) \big) = \E \Big[ \int_{0}^{T} \partial_{a} H(t,\bX, \P_{\bX},X_t,\P_{X_t},Y_t,Z_t,\alpha_t)\cdot(\beta_t- \alpha_t) \d t  \Big],
\end{align*}}
which ends the proof.
\end{proof}
\begin{Theorem}\label{thm : necessary_condition_optimality}
    Let Assumption \ref{assumption : regularity_pontryagin_optimality} hold and assume that the map $H$ is furthermore a convex map with respect to its last variable, i.e.\ the map
    \begin{align*}
        A \ni a \mapsto H(t,\bx,\boldsymbol{\mu},x,\mu,y,z,a),
    \end{align*}
    is convex for any $(t,\bx,\boldsymbol{\mu},x,\mu,y,z) \in [0,T] \times (\R^d)^N \times \Pc_2( (\R^d)^N) \times \R^d \times \Pc_2(\R^d) \times  \R^d \times \R^{d \times n}$. Let $\hat{\alpha}=(\hat{\alpha}_t)_{0 \leq t \leq T}$ be an optimal control and $(X_t,Y_t,Z_t)_{0 \leq t \leq T}$ the associated forward and adjoint processes. Then, we have
    \begin{align*}
        H(t,\bX, \P_{\bX},X_t,\P_{X_t}, Y_t,Z_t, \hat{\alpha}_t) \leq H(t,\bX,\P_{\bX},X_t,\P_{X_t},Y_t,Z_t,a) \quad \d t \otimes \d \P-\text{a.e},
    \end{align*}
    for every $a \in A$.
\end{Theorem}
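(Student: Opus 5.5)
The plan is the classical route of Carmona--Delarue, adapted to the jump adjoint. Fix the optimal control $\hat\alpha$ and the associated $X$, together with the adjoint pair $(Y,Z)$ of Definition \ref{def : adjoint_processes}. For any $\beta\in\Ac$ and $\epsilon\in(0,1)$, the perturbed control $\hat\alpha+\epsilon(\beta-\hat\alpha)$ is admissible, because $A$ is convex. Optimality then gives $J(\hat\alpha+\epsilon(\beta-\hat\alpha))-J(\hat\alpha)\ge 0$. Dividing by $\epsilon$ and letting $\epsilon\to0$, Corollary \ref{corollary:calculus_of_variations} (itself built on Lemmas \ref{lemma : def_variation_process}, \ref{eq : Gateaux_derivative} and \ref{lemma : intermediate_computation}) yields
\begin{align*}
\E\Big[\int_0^T \partial_a H(t,\boldsymbol{\Theta}_t)\cdot(\beta_t-\hat\alpha_t)\,\d t\Big]\ \ge\ 0 \qquad \text{for all } \beta\in\Ac .
\end{align*}

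The next step localizes this inequality. Fix $a\in A$ and an arbitrary progressively measurable set $B\subset[0,T]\times\Omega$. Take $\beta_t := a\mathds{1}_B(t,\omega)+\hat\alpha_t\mathds{1}_{B^c}(t,\omega)$. This $\beta$ is $A$-valued, progressively measurable and square integrable, so $\beta\in\Ac$. The inequality then reads $\E[\int_0^T \mathds{1}_B\,\partial_a H(t,\boldsymbol{\Theta}_t)\cdot(a-\hat\alpha_t)\,\d t]\ge0$.

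The integrand is integrable. By the growth bounds of Assumption \ref{assumption : regularity_pontryagin_optimality}, the quantity $\partial_a H=\partial_a b^\top Y+\partial_a\sigma^\top Z+\partial_a f$ lies in $L^2(\d t\otimes\d\P)$, since $\partial_a b$ and $\partial_a\sigma$ are bounded, $Y\in\S^2$, $Z\in\H^2$, and $\partial_a f$ has linear growth. Because $B$ is arbitrary, we obtain $\partial_a H(t,\boldsymbol{\Theta}_t)\cdot(a-\hat\alpha_t)\ge0$ for $\d t\otimes\d\P$-a.e. $(t,\omega)$. Some care is needed because the exceptional set depends on $a$. To handle this, I will take a countable dense subset $D$ of $A$ and intersect the full-measure sets over $D$. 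Continuity of $a\mapsto\partial_a H(\cdot)\cdot(a-\hat\alpha_t)$, which is linear in $a$, then extends the inequality to all $a\in A$ simultaneously on a single full-measure set.

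Finally, convexity of $a\mapsto H(t,\bx,\boldsymbol\mu,x,\mu,y,z,a)$ gives
\begin{align*}
H(t,\boldsymbol{\Theta}_t^{a}) \ \ge\ H(t,\boldsymbol{\Theta}_t)+\partial_a H(t,\boldsymbol{\Theta}_t)\cdot(a-\hat\alpha_t)\ \ge\ H(t,\boldsymbol{\Theta}_t),
\end{align*}
where $\boldsymbol{\Theta}^a_t$ denotes $\boldsymbol{\Theta}_t$ with $\hat\alpha_t$ replaced by $a$. This is the claimed minimality, $\d t\otimes\d\P$-a.e. and for every $a\in A$.

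The main analytical work, namely the Gâteaux derivative and the duality with the jumps of $Y$, is already contained in Corollary \ref{corollary:calculus_of_variations}. The remaining delicate point is the localization step. There I must check that the spike-in-space choice of $\beta$ is admissible and that $\partial_a H(t,\boldsymbol{\Theta}_t)$ is integrable enough for the indicator argument to apply. I will also handle the a.e.\ quantifier over $a$ via the countable dense subset. I expect no further obstacle.
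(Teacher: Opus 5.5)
Your proposal is correct and takes essentially the same route as the paper, which simply defers to \cite[Theorem 6.14]{carmona2018probabilistic_I}: optimality together with Corollary~\ref{corollary:calculus_of_variations} gives the integrated variational inequality, the localization with $\beta = a\mathds{1}_B + \hat\alpha\mathds{1}_{B^c}$ turns it into $\partial_a H\cdot(a-\hat\alpha_t)\ge 0$ almost everywhere, and convexity in $a$ then gives minimality. Your extra countable-dense-subset step, which produces a single null set valid for all $a\in A$, is a harmless strengthening of what the statement requires.
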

\begin{proof}
The proof is similar to the one in \cite[Theorem 6.14]{carmona2018probabilistic_I}.
\end{proof}

\subsection{A sufficient condition}

\begin{Theorem}\label{thm : sufficient_condition_optimality}
    Let $\alpha =(\alpha_t)_{0 \leq t \leq T} \in \Ac$ and $(X_t,Y_t,Z_t)_{0 \leq t \leq  T}$ be the associated controlled state and adjoint processes. If we assume
    \begin{enumerate}
        \item [(1)] $(\R^d)^N \times \Pc_2((\R^d)^N) \ni (\bx,\boldsymbol{\mu}) \mapsto g(\bx,\boldsymbol{\mu})$ is convex in the sense of Definition \ref{def: convexity_maps}.
        \item [(2)] $(\R^d)^N \times \Pc_2((\R^d)^N) \times \R^d \times \Pc_2(\R^d) \times A \ni (\bx,\boldsymbol{\mu},x,\mu,a) \mapsto H(t,\bx,\boldsymbol{\mu},x,\mu, Y_t,Z_t,a)$ is convex $\d t \otimes \d \P$-\text{a.e} in the sense of Definition \ref{def: convexity_maps}.
    \end{enumerate}
    If we assume furthermore
    {\footnotesize\begin{align*}
        H(t,\bX, \P_{\bX},X_t,\P_{X_t},Y_t,Z_t,\alpha_t) = \underset{a \in A}{\text{ inf }} H(t,\bX, \P_{\bX},X_t,\P_{X_t},Y_t,Z_t,a), \quad \d t \otimes \d \P-\text{a.e},
    \end{align*}}
    then $\alpha$ is an optimal control, i.e.\ $J(\alpha) = \underset{\alpha' \in \Ac}{\text{ inf }} J(\alpha')$.
\end{Theorem}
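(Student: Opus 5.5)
The plan is to follow the classical sufficiency argument of \cite{carmona_delarue_2015}, adapted to the jumps of the adjoint process. Fix an arbitrary $\alpha' \in \Ac$ with state process $X'$, and write $\bX' = (X'_{t_1},\ldots,X'_{t_N})$, $\boldsymbol{\theta}_t$, $\boldsymbol{\theta}'_t$ and $\boldsymbol{\Theta}_t$, $\boldsymbol{\Theta}'_t$ as in the proof of Corollary \ref{corollary:calculus_of_variations}. We split
\begin{align*}
J(\alpha) - J(\alpha') = \E\big[g(\bX,\P_{\bX}) - g(\bX',\P_{\bX'})\big] + \E\Big[\int_0^T \big(f(t,\boldsymbol{\theta}_t) - f(t,\boldsymbol{\theta}'_t)\big)\d t\Big].
\end{align*}

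For the terminal term, apply the convexity of $g$ (Definition \ref{def: convexity_maps}) with the natural coupling $(\bX,\bX')$ defined on the same probability space. Taking expectations and using Fubini to swap the roles of $\bX$ and $\tilde{\bX}$ in the measure-derivative term gives
\begin{align*}
\E\big[g(\bX,\P_{\bX}) - g(\bX',\P_{\bX'})\big] \leq -\E\Big[\sum_{k=1}^N\Big(\partial_{x_k} g(\bX,\P_{\bX}) + \tilde\E\big[\partial_{\tilde x_k}\tfrac{\delta}{\delta\bm} g(\tilde\bX,\P_{\bX})(\bX)\big]\Big)\cdot (X'_{t_k}-X_{t_k})\Big].
\end{align*}
The $k=N$ contribution equals $-\E[Y_T\cdot(X'_T-X_T)]$. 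For $k\le N-1$, the $g$-part is the $g$-portion of $-\Delta Y_{t_k}$ before conditioning. Since $X'_{t_k}-X_{t_k}$ is $\Fc_{t_k}$-measurable, the tower property lets us reinsert the conditional expectation in \eqref{defYadjoint}.

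Next, apply It\^o's formula to $Y\cdot(X'-X)$ piecewise on $[t_k,t_{k+1})$, exactly as in the proof of Corollary \ref{corollary:calculus_of_variations}, replacing $V$ by $X'-X$. This produces
\begin{align*}
\E[Y_T\cdot(X'_T-X_T)] - \sum_{k=1}^{N-1}\E[\Delta Y_{t_k}\cdot(X'_{t_k}-X_{t_k})] = \E\int_0^T\Big(-A_t\cdot(X'_t-X_t) + Y_t\cdot(b'_t-b_t) + Z_t\cdot(\sigma'_t-\sigma_t)\Big)\d t,
\end{align*}
where $b'_t - b_t = b(t,\boldsymbol{\theta}'_t) - b(t,\boldsymbol{\theta}_t)$, and similarly for $\sigma$. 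The local martingale terms have zero expectation: $Y\in\S^2$, $Z\in\H^2$ and $X,X'\in\S^2$, so a standard localization and BDG argument shows they are true martingales.

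Using the formula for $\Delta Y_{t_k}$ and the Fubini manipulation of Lemma \ref{lemma : intermediate_computation}, the $H$-part of the jumps reassembles into
\begin{align*}
\E\int_0^T\Big(\partial_{\bx}H(t,\boldsymbol{\Theta}_t)\cdot(\bX'-\bX) + \tilde\E\big[\partial_{\tilde\bx}\tfrac{\delta}{\delta\bm}H(t,\tilde{\boldsymbol{\Theta}}_t)(\bX)\big]\cdot(\bX'-\bX)\Big)\d t.
\end{align*}
Combining everything, together with $H = b\cdot y + \sigma\cdot z + f$, we obtain
\begin{align*}
J(\alpha)-J(\alpha') \le \E\int_0^T\Big(H(t,\boldsymbol{\Theta}_t) - H(t,\boldsymbol{\Theta}^{\prime,Y}_t) + \mathcal{D}_t\Big)\d t.
\end{align*}
Here $\boldsymbol{\Theta}^{\prime,Y}_t$ denotes $(\bX',\P_{\bX'},X'_t,\P_{X'_t},Y_t,Z_t,\alpha'_t)$, and $\mathcal{D}_t$ collects the first-order terms in $\bX'-\bX$, $\P_{\bX'}$, $X'_t-X_t$ and $\P_{X'_t}$ from $\partial_{\bx}H$, $\partial_{\tilde\bx}\frac{\delta}{\delta\bm}H$, $\partial_xH$ and $\partial_{\tilde x}\frac{\delta}{\delta m}H$.

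Convexity (2) of $H$ in the sense of Definition \ref{def: convexity_maps}, applied with the coupling $(\bX,\bX')$, $(X_t,X'_t)$, bounds the integrand by $-\partial_a H(t,\boldsymbol{\Theta}_t)\cdot(\alpha'_t-\alpha_t)$. The minimality of $\alpha_t$ over the convex set $A$ gives $\partial_aH(t,\boldsymbol{\Theta}_t)\cdot(\alpha'_t-\alpha_t)\ge 0$. Hence $J(\alpha)\le J(\alpha')$.

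The main obstacle is the bookkeeping step: checking that the jump terms $\Delta Y_{t_k}$, summed against $X'_{t_k}-X_{t_k}$, exactly reproduce the path-derivative terms required by the convexity inequalities. This must hold both for the time integrals, via $\mathds{1}_{t\ge t_i}$ and $\eta(t)$ as in Lemma \ref{lemma : intermediate_computation}, and for the mean-field terms, where the roles of $\bX$ and $\tilde\bX$ are exchanged by Fubini. Integrability of all products, needed to justify conditioning and Fubini, follows from the square-integrability conditions in Definition \ref{def : adjoint_processes} and Proposition \ref{thm : existence_unicity_X_SDE}.
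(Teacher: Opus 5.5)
Your proposal is correct and follows essentially the same route as the paper. Both arguments apply the convexity of $g$ pointwise with the synchronous coupling of $(\bX,\bX')$ and relabel by Fubini. Both then identify the $g$-terms with $-\Delta Y_{t_k}$ and $Y_T$ via the tower property, and apply It\^o's formula to $Y\cdot(X'-X)$ across the deterministic jump times as in Corollary~\ref{corollary:calculus_of_variations}. Both conclude with the convexity of $H$, where $(Y_t,Z_t)$ of the candidate $\alpha$ is frozen, together with the first-order condition $\partial_a H(t,\boldsymbol{\Theta}_t)\cdot(a-\alpha_t)\ge 0$.
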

\begin{proof}
    Let $\hat{\alpha} = (\hat{\alpha}_t)_{0 \leq t \leq T} \in \Ac$ and we denote $\hat{\boldsymbol{\theta}}_t := (\hat{\bX}, \P_{\hat{\bX}},\hat{X}_t, \P_{\hat{X}_t},\hat{\alpha}_t)$ and $\hat{\boldsymbol{\Theta}}_t := (\hat{\bX}, \P_{\hat{\bX}}, \hat{X}_t, \P_{\hat{X}_t},Y_t,Z_t,\hat{\alpha}_t)$ for any $0 \leq t \leq T$ where $(\hat{X}_t)_{0 \leq t \leq T}$ denotes the controlled state process by $\hat{\alpha}$. We then have
    \begin{align}\label{eq : optimality_J}
        J(\alpha) -J(\hat{\alpha}) =  \E \Big[ g(\bX,\P_{\bX}) - g(\hat{\bX},\P_{\hat{\bX}}) \Big] + \E \Big[ \int_{0}^{T} \big( f(t,\boldsymbol{\theta}_t) - f(t,\hat{\boldsymbol{\theta}}_t) \big) \d t \Big].
    \end{align}
    From the convexity assumption on $g$ applied pointwise in $\omega$ to the pair $((\bx,\boldsymbol{\mu}),(\bx',\boldsymbol{\mu}')) = ((\bX(\omega),\P_{\bX}),(\hat{\bX}(\omega),\P_{\hat{\bX}}))$, with the iid tilde-copy coupling $(\tilde{\bX},\tilde{\hat{\bX}}) \sim (\P_{\bX},\P_{\hat{\bX}})$ with $(\tilde{\P}_{\tilde{\bX}}, \tilde{\P}_{\tilde{\hat{\bX}}}) = (\P_{\bX}, \P_{\hat{X}})$ on $(\tilde\Omega,\tilde\Fc,\tilde\P)$ playing the role of $(\bX,\bX')$ in Definition \ref{def: convexity_maps}, we obtain
    \begin{align}\label{eq : conv_g_pointwise}
        g(\bX,\P_{\bX}) - g(\hat{\bX}, \P_{\hat{\bX}}) \leq  \partial_{\bx} g(\bX, \P_{\bX}) \cdot (\bX - \hat{\bX}) + \tilde{\E} \Big[  \partial_{\tilde{\bx}} \frac{\delta}{\delta \bm} g(\bX, \P_{\bX})(\tilde{\bX}) \cdot (\tilde{\bX} - \tilde{\hat{\bX}}) \Big].
    \end{align}
    Taking the expectation and using Fubini's theorem we have
    \begin{align}\label{eq : relabel_g}
        \E  \bigg[\tilde{\E}\Big[ \partial_{\tilde{\bx}} \tfrac{\delta}{\delta \bm} g(\bX, \P_{\bX})(\tilde{\bX}) \cdot (\tilde{\bX} - \tilde{\hat{\bX}}) \Big] \bigg] = \E \bigg[ \tilde{\E} \Big[ \partial_{\tilde{\bx}} \tfrac{\delta}{\delta \bm} g(\tilde{\bX}, \P_{\bX})(\bX) \cdot (\bX - \hat{\bX}) \Big] \bigg],
    \end{align}
    so that recalling the notation \eqref{eq : derivative_notations}, we have
\begin{equation}
        \label{eq : ineq_convexity_g}\begin{split}
        \E \Big[  g(\bX,\P_{\bX}) - g(\hat{\bX}, \P_{\hat{\bX}})  \Big] &\leq \E \Big[ \sum_{i=1}^{N-1} - \Delta Y_{t_i} \cdot (X_{t_i} - \hat{X}_{t_i}) \Big] + \E \Big[ Y_{t_N} \cdot (X_{t_N} - \hat{X}_{t_N}) \Big] \\
        & \qquad- \E \Bigg[ \int_{0}^{T} \bigg(\partial_{\bx}  H(t,\boldsymbol{\Theta}_t) + \tilde{\E} \big[ \partial_{\tilde{\bx}} \frac{\delta}{\delta \bm} H(t,\tilde{\boldsymbol{\Theta}}_t)(\bX) \big] \bigg) \cdot  (\bX - \hat{\bX}) \d t \Bigg], \end{split}\end{equation}
    where we have used the i.i.d. relabeling identity \eqref{eq : relabel_g} (applied to $\tilde{H}$ in place of $g$) to rewrite the Lions-measure-derivative term in terms of the outer increment $(X_{t_i} - \hat{X}_{t_i})$; this is precisely the shape produced by expanding the adjoint jump $-\Delta Y_{t_i}$ in Definition \ref{def : adjoint_processes}.
By Itô's formula, and following the same computations as done in the proof of Corollary \ref{corollary:calculus_of_variations}, we have
{\small\begin{align*}
    \E \Big[\sum_{i=1}^{N-1} &- \Delta Y_{t_i} \cdot (X_{t_i} - \hat{X}_{t_i}) \Big] + \E \Big[ Y_{t_N} \cdot (X_{t_N} - \hat{X}_{t_N}) \Big] \\
    &=\E \Big[ \int_{0}^{T} - A_t \cdot (X_t - \hat{X}_t) +Y_t \cdot \big( b(t,\boldsymbol{\theta}_t ) - b(t,\hat{\boldsymbol{\theta}}_t) \big) + Z_t \cdot \big(\sigma(t,\boldsymbol{\theta}_t) - \sigma(t,\hat{\boldsymbol{\theta}}_t)\big) \d t \Big],
\end{align*}}
where the process $A$ has been defined in \eqref{eq : def_dynamics_variation_process}.
Recalling the definition of the Hamiltonian map $H$ in \eqref{eq : hamiltonian_map} and plugging \eqref{eq : ineq_convexity_g} into \eqref{eq : optimality_J} gives
\begin{align*}
    J(\alpha) - J(\hat{\alpha}) &\leq \E \Big[ \int_{0}^{T} \Big(H(t,\boldsymbol{\Theta}_t) - H(t,\hat{\boldsymbol{\Theta}}_t) \Big) \d t \Big] \\
    &\quad - \E \Big[ \int_{0}^{T}  \Big(\partial_{\bx} H(t,\boldsymbol{\Theta}_t) + \tilde{\E} \big[ \partial_{\tilde{\bx}} \frac{\delta}{\delta \bm} H(t,\tilde{\boldsymbol{\Theta}}_t)(\bX) \big] \Big) \cdot (\bX - \hat{\bX}) \d t \Big] \\
    &\quad - \E \Big[ \int_{0}^{T} \Big(  \partial_{x} H(t,\boldsymbol{\Theta}_t) + \tilde{\E} \big[ \partial_{\tilde{x}} \frac{\delta}{\delta m} H(t,\tilde{\boldsymbol{\Theta}}_t)(X_t) \big] \Big) \cdot (X_t - \hat{X}_t) \d t  \Big]
    \\
    &\leq \E \Big[ \int_{0}^{T} \partial_{a} H(t,\boldsymbol{\Theta}_t) \cdot  (\alpha_t - \hat{\alpha}_t) \d t \Big],
\end{align*}
where the last inequality follows from the convexity assumption on $H$ in Definition \ref{def: convexity_maps}: the pointwise convexity inequality is applied at $\omega \in \Omega$ to the pair $((\bx,\boldsymbol{\mu},x,\mu,a),$ $(\bx',\boldsymbol{\mu}',x',\mu',a'))$ $=(\boldsymbol{\theta}_t(\omega),\hat{\boldsymbol{\theta}}_t(\omega))$ with the i.i.d. tilde-copy coupling $(\tilde{\bX},\tilde{\hat{\bX}},\tilde{X},\tilde{\hat{X}})$ on $(\tilde\Omega,\tilde\Fc,\tilde\P)$, and the Lions-measure-derivative terms are then turned into the outer-increment shape above via the relabeling identity \eqref{eq : relabel_g} applied to $\tilde{H}_t$ in place of $g$ (and, for the $\frac{\delta}{\delta m}$ term, to $(X_t,\hat{X}_t)$ in place of $(\bX,\hat{\bX})$). From the convexity of $H$ with respect to its control argument, together with the minimality assumption on $\alpha$, the first-order optimality condition yields
\begin{align*}
    \partial_{a} H(t,\boldsymbol{\Theta}_t) \cdot (\alpha_t-a) \leq 0 \quad \d t \otimes \d \P-\text{a.e},
\end{align*}
for every $a \in A$, which yields the result.
\end{proof}

\section{Solvability of the Pontryagin FBSDE}
\label{sec : solvability}
We now propose two applications of this extension of McKean--Vlasov optimal control to the path-dependent setting. We first extend the classical linear-quadratic control problem to the path-dependent McKean--Vlasov setting, and  propose a natural formulation for the generation of time series.

\subsection{Linear dynamics and \texorpdfstring{$L$}{L}-convexity of \texorpdfstring{$f$}{f} and \texorpdfstring{$g$}{g}}

\begin{Assumption}\label{assumption:solvability_LQ}
There exists two constants $L \geq 0$ and $\lambda > 0$ such that:

\begin{enumerate}
    \item [(1)] The drift and volatility functions $b$ and $\sigma$ are linear in $\boldsymbol{x}$, $\boldsymbol{\mu}$, $x$, $\mu$ and $a$, i.e.\ for any $(t,\bx,\boldsymbol{\mu},x,\mu,a) \in [0,T] \times (\R^d)^N \times \Pc_2((\R^d)^N) \times \R^d \times \Pc_2(\R^d) \times A$, we assume that
    \begin{align*}
    \begin{cases}
        b(t,\bx,\boldsymbol{\mu},x,\mu,a) &:= b_0(t) + \sum_{j=1}^{\eta(t)} B_j(t) x_j + \sum_{j=1}^{\eta(t)} \bar{B}_j(t) \bar{\mu}_j+ C(t) x + \bar{C}(t) \bar{\mu} + D(t) a , \\
        \sigma(t,\bx,\boldsymbol{\mu},x,\mu,a) &:= \sigma_0(t) + \sum_{j=1}^{\eta(t)} E_j(t)x_j + \sum_{j=1}^{\eta(t)} \bar{E}_j(t) \bar{\mu}_j + F(t) x + \bar{F}(t) \bar{\mu} + G(t) a,
    \end{cases}
    \end{align*}
where all coefficients are deterministic  functions of time, with
$b_0$ valued in $\R^d$,
$B_i,\bar B_i,C,\bar C$ in $\R^{d\times d}$,
$D$ in $\R^{d\times m}$,
$\sigma_0$ in $\R^{d\times n}$,
$E_i,\bar E_i,F,\bar F$ in $\R^{(d\times n)\times d}$,
and $G$ in $\R^{(d\times n)\times m}$,
for $i \in \llbracket 1, N \rrbracket$ and where we used 
    \begin{align*}
        \bar{\mu} := \int_{\R^d} x \mu(\d x), \quad \bar{\mu}_{j} :=  \int_{\R^d} x \mu_j(\d x) , \quad j \in \llbracket 1, N \rrbracket,
    \end{align*}
   for, respectively, the mean of $\mu$ and the mean of $\mu_j= \text{pr}_j \sharp \boldsymbol{\mu}$.
    \item [(2)] The derivatives of $f$ and $g$ with respect to $(\bx,x,a)$ and $\bx$ respectively are $L$-Lipschitz continuous with respect to $(\bx,\boldsymbol{\mu},x,\mu,a)$ and $(\bx,\boldsymbol{\mu})$ respectively. Moreover, the following holds
{
\begin{align*}
\begin{cases}
&\E\Big[
\big|
\partial_{\tilde{x}} \frac{\delta}{\delta m} f(t,\boldsymbol{\theta}')(X')
-
\partial_{\tilde{x}} \frac{\delta}{\delta m} f(t,\boldsymbol{\theta})(X)
\big|^2
\Big] \\
&\qquad \leq L\Big(
|\bx'_{\eta(t)}-\bx_{\eta(t)}|^2
+\Wc_2^2(\boldsymbol{\mu}'_{\eta(t)},\boldsymbol{\mu}_{\eta(t)})
+|x'-x|^2
+\Wc_2^2(\mu',\mu)
+|a'-a|^2
\Big), \\[0.4em]
&\E\Big[
\big|
\partial_{\tilde{\bx}} \frac{\delta}{\delta \bm} f(t,\boldsymbol{\theta}')(\bX')
-
\partial_{\tilde{\bx}} \frac{\delta}{\delta \bm} f(t,\boldsymbol{\theta})(\bX)
\big|^2
\Big] \\
&\qquad \leq L\Big(
|\bx'_{\eta(t)}-\bx_{\eta(t)}|^2
+\Wc_2^2(\boldsymbol{\mu}'_{\eta(t)},\boldsymbol{\mu}_{\eta(t)})
+|x'-x|^2 +\Wc_2^2(\mu',\mu)
+|a'-a|^2
\Big), \\[0.4em]
&\E\Big[
\big|
\partial_{\tilde{\bx}} \frac{\delta}{\delta \bm} g(\bx',\boldsymbol{\mu}')(\bX')
-
\partial_{\tilde{\bx}} \frac{\delta}{\delta \bm} g(\bx,\boldsymbol{\mu})(\bX)
\big|^2
\Big] \\
&\qquad \leq L\Big(
|\bx'-\bx|^2
+\Wc_2^2(\boldsymbol{\mu}',\boldsymbol{\mu})
\Big),
\end{cases}
\end{align*}
}

    for any $t \in [0,T]$, $\boldsymbol{\theta}=(\bx,\boldsymbol{\mu}, x,\mu,a), \boldsymbol{\theta}' =(\bx',\boldsymbol{\mu}',x',\mu',a') \in (\R^d)^N \times \Pc_2((\R^d)^N) \times \R^d \times \Pc_2(\R^d) \times A$ and where $\bX =(X_{1},\ldots,X_N) \sim \boldsymbol{\mu}$ and $\bX' =(X'_1,\ldots, X'_N) \sim \boldsymbol{\mu}'$ and $X \sim \mu$ and $X' \sim \mu'$. 
    \item [(3)] The function $f$ satisfies the $L$-convexity property
\begin{align*}
    &f(t,\boldsymbol{\theta}') - f(t,\boldsymbol{\theta})
    - \partial_x f(t,\boldsymbol{\theta}) \cdot (x'-x)
    - \E\Big[
        \partial_{\tilde{x}} \frac{\delta}{\delta m} f(t,\boldsymbol{\theta})(X)
        \cdot (X'-X)
    \Big] \\
    &\qquad
    - \partial_{\bx} f(t,\boldsymbol{\theta}) \cdot (\bx' - \bx)
    - \E\Big[
        \partial_{\tilde{\bx}} \frac{\delta}{\delta \bm} f(t,\boldsymbol{\theta})(\bX)
        \cdot (\bX' - \bX)
    \Big]
    - \partial_a f(t,\boldsymbol{\theta}) \cdot (a'-a)
    \geq \lambda |a'-a|^2 ,
\end{align*}
for any $t \in [0,T]$, $\boldsymbol{\theta}=(\bx,\boldsymbol{\mu}, x,\mu,a), \boldsymbol{\theta}' =(\bx',\boldsymbol{\mu}',x',\mu',a') \in (\R^d)^N \times \Pc_2((\R^d)^N) \times \R^d \times \Pc_2(\R^d) \times A$ and where $\bX =(X_{1},\ldots,X_N) \sim \boldsymbol{\mu}$ and $\bX' =(X'_1,\ldots, X'_N) \sim \boldsymbol{\mu}'$ and $X \sim \mu$ and $X' \sim \mu'$. Moreover, the function $g$ is also assumed to be $L$-convex, i.e.
\begin{align*}
    g(\bx',\boldsymbol{\mu}') - g(\bx,\boldsymbol{\mu}) -  \partial_{\bx} g(\bx,\boldsymbol{\mu}) \cdot (\bx' - \bx) -  \E \Big[ \partial_{\tilde{\bx}} \frac{\delta}{\delta \bm} g(\bx,\boldsymbol{\mu})(\bX) \cdot(\bX' - \bX) \Big]  \geq 0.
\end{align*}
\end{enumerate}
\end{Assumption}
\begin{Remark}\label{rmk : hamiltonian_L_convexity}
    Under the specific dynamics of the state process, it is straightforward to see that the Hamiltonian map $H$ satisfies the $L$-convexity property, i.e.
    \begin{align*}
    &H(t,\boldsymbol{\Theta}') - H(t,\boldsymbol{\Theta})
    - \partial_x H(t,\boldsymbol{\Theta}) \cdot (x'-x)
    - \E\big[
        \partial_{\tilde{x}} \frac{\delta}{\delta m} H(t,\boldsymbol{\Theta})(X)
        \cdot (X'-X)
    \big] \\
    &\qquad
    - \partial_{\bx} H(t,\boldsymbol{\Theta}) \cdot (\bx' - \bx)
    - \E\big[
        \partial_{\tilde{\bx}} \frac{\delta}{\delta \bm} H(t,\boldsymbol{\Theta})(\bX)
        \cdot (\bX' - \bX)
    \big]
    - \partial_a H(t,\boldsymbol{\Theta}) \cdot (a'-a)
    \geq \lambda |a'-a|^2 ,
\end{align*}
    where we used that $\partial_{a} H(t,\boldsymbol{\Theta}') - \partial_a H(t,\boldsymbol{\Theta}) = \partial_a f(t,\boldsymbol{\theta}') - \partial_a f(t,\boldsymbol{\theta})$
    for any $t \in [0,T]$, $\boldsymbol{\Theta}=(\bx,\boldsymbol{\mu},x,\mu,y,z,a), \boldsymbol{\Theta}'=(\bx',\boldsymbol{\mu}',x',\mu',y,z,a') \in (\R^d)^N \times \Pc_2((\R^d)^N) \times \R^d \times \Pc_2(\R^d) \times \R^d \times \R^{d \times n} \times A$ and where $(X_1,\ldots,X_N) \sim \boldsymbol{\mu}$, $(X'_1, \ldots, X'_N) \sim \boldsymbol{\mu}'$, $X \sim \mu$ and $X' \sim \mu'$. 
\end{Remark}
The Hamiltonian map $H$ takes the following form
\begin{equation}  \label{eq:Hamiltonian_map_def_L_convex}
\begin{split}
    H(t,\bx,\boldsymbol{\mu},x,\mu,y,z,a) &= \big[  b_0(t) + \sum_{j=1}^{\eta(t)} B_j(t) x_j + \sum_{j=1}^{\eta(t)} \bar{B}_j(t) \bar{\mu}_j+ C(t) x + \bar{C}(t) \bar{\mu} + D(t) a ] \cdot y  \\
    &\quad \; + \big[ \sigma_0(t) + \sum_{j=1}^{\eta(t)} E_j(t)x_j + \sum_{j=1}^{\eta(t)} \bar{E}_j(t) \bar{\mu}_j + F(t) x + \bar{F}(t) \bar{\mu} + G(t) a \big ] \cdot z  \\
    &\quad \; + f(t,\bx,\boldsymbol{\mu},x,\mu,a).  \end{split}
\end{equation}

\noindent It is clear that $H$ is a $\Pi$-non-anticipative map. Moreover, given $(t,\bx,\boldsymbol{\mu},x,\mu,y,z)$ $\in [0,T] \times (\R^d)^N \times \Pc_2((\R^d)^N) \times \R^d \times$ $\Pc_2(\R^d) \times \R^d \times \R^{d \times n}$, the map
$A \ni a \mapsto H(t,\bx,\boldsymbol{\mu},x,\mu,y,z,a)$ is strictly convex so that there exists a unique minimizer\linebreak $\hat{a}(t,\bx,\boldsymbol{\mu},x,\mu,y,z)$ $= \underset{a \in A}{\text{ arg min }} H(t,\bx,\boldsymbol{\mu},x,\mu,y,z,a)$. Moreover, following the same arguments as in  \cite[Lemma 6.18]{carmona2018probabilistic_I}, the map  $[0,T] \times (\R^d)^N \times \Pc_2((\R^d)^N)  \times \R^d \times \Pc_2(\R^d) \times \R^d \times \R^{d \times n} \ni (t,\bx,\boldsymbol{\mu},x,\mu,y,z) \mapsto \hat{a}(t,\bx,\boldsymbol{\mu},x,\mu,y,z) \in A$
is measurable, $\Pi$-non-anticipative, locally bounded, and Lipschitz continuous with respect to $(\bx,\boldsymbol{\mu},x,\mu,y,z)$, uniformly in $t \in [0,T]$.

Now, given the necessary and sufficient conditions from Theorem \ref{thm : necessary_condition_optimality} and Theorem \ref{thm : sufficient_condition_optimality}, we are looking to find a solution to the following FBSDE with jumps where we use the control $\hat{\alpha} =(\hat{\alpha}_t)_{0 \leq t \leq T}$ defined by $\hat{\alpha}_t = \hat{a}(t,\bX, \P_{\bX},X_t,\P_{X_t}, Y_t,Z_t)$ where $\hat{a}$ is the unique minimizer map of the Hamiltonian $H$ defined in \eqref{eq:Hamiltonian_map_def_L_convex}. 

\begin{align}\label{eq : dynamics_linear_cost_alpha_convex}
\left\{
\begin{array}{rcl}
\d X_t
&=&
\displaystyle
\Big[
b_0(t)
+ \sum_{j=1}^{\eta(t)} B_j(t) X_{t_j}
+ \sum_{j=1}^{\eta(t)} \bar{B}_j(t)\E[X_{t_j}]
+ C(t)X_t
+ \bar{C}(t)\E[X_t]
+ D(t)\hat{\alpha}_t
\Big]\d t
\\[0.2cm]
&&\displaystyle 
+
\Big[
\sigma_0(t)
+ \sum_{j=1}^{\eta(t)} E_j(t) X_{t_j}
+ \sum_{j=1}^{\eta(t)} \bar{E}_j(t)\E[X_{t_j}]
+ F(t)X_t
+ \bar{F}(t)\E[X_t]
+ G(t)\hat{\alpha}_t
\Big]\d W_t,
\\[0.3cm]
X_0
&=&
\xi,
\\[0.3cm]
\d Y_t
&=&
\displaystyle
-\Big[
\partial_x f(t,\bX,\P_{\bX},X_t,\P_{X_t},\hat{\alpha}_t)
+ C(t)^\top Y_t
+ F(t)^\top Z_t
\Big]\d t
+ Z_t\d W_t
\\[0.2cm]
&&\displaystyle
-\Big[
\tilde{\E}\big[
\partial_{\tilde{x}}\frac{\delta}{\delta m}
f(t,\tilde{\bX},\P_{\bX},\tilde{X}_t,\P_{X_t},\tilde{\hat{\alpha}}_t)(X_t)
\big]
+ \bar{C}(t)^\top \E[Y_t]
+ \bar{F}(t)^\top \E[Z_t]
\Big]\d t,
\\[0.2cm]
&&\displaystyle \hspace{3 cm}
t_{k-1}\le t<t_k,\qquad
k\in\llbracket 1,N-1\rrbracket,
\qquad
t_{N-1}\le t\le t_N,
\\[0.3cm]
Y_{t_k}-Y_{t_k^-}
&=&
\displaystyle
-\E\Big[
\int_{t_k}^{T}
\Big(
\partial_{x_k}f(t,\bX,\P_{\bX},X_t,\P_{X_t},\hat{\alpha}_t)
+ B_k(t)^\top Y_t
+ E_k(t)^\top Z_t
\Big)\d t
\,\Big|\,
\Fc_{t_k}
\Big]
\\[0.2cm]
&&\displaystyle \hspace{0.1 cm}
-\E\Big[
\int_{t_k}^{T}
\Big(
\tilde{\E}\big[
\partial_{\tilde{x}_k}\frac{\delta}{\delta \bm}
f(t,\tilde{\bX},\P_{\bX},\tilde{X}_t,\P_{X_t},\tilde{\hat{\alpha}}_t)(\bX)
\big]
+ \bar{B}_k(t)^\top \E[Y_t]
+ \bar{E}_k(t)^\top \E[Z_t]
\Big)\d t
\,\Big|\,
\Fc_{t_k}
\Big],
\\[0.2cm]
&&\displaystyle \hspace{0.1 cm}
-\E\Big[
\partial_{x_k}g(\bX,\P_{\bX})
+
\tilde{\E}\big[
\partial_{\tilde{x}_k}\frac{\delta}{\delta \bm}
g(\tilde{\bX},\P_{\bX})(\bX)
\big]
\,\Big|\,
\Fc_{t_k}
\Big], \qquad k \in \llbracket 1, N-1 \rrbracket,
\\[0.3cm]
Y_{t_N}
&=&
\displaystyle
\partial_{x_N}g(\bX,\P_{\bX})
+
\tilde{\E}\Big[
\partial_{\tilde{x}_N}\frac{\delta}{\delta \bm}
g(\tilde{\bX},\P_{\bX})(\bX)
\Big].
\end{array}
\right.
\end{align}
From now, the notation $(\boldsymbol{\Theta}_t)_{0 \leq t \leq T}$ will stand for $(\bX_{\eta(t)}, \P_{\bX_{\eta(t)}}, X_t, \P_{X_t}, Y_t, Z_t,\alpha_t)_{0 \leq t \leq T}$ with values in $(\R^d)^{\eta(t)} \times \Pc_2((\R^d)^{\eta(t)}) \times \R^d \times \Pc_2(\R^d) \times \R^d \times \R^{d \times n} \times A$. We will denote by $\mathfrak{S}$ the space of processes $\boldsymbol{\Theta} = (\boldsymbol{\Theta}_t)_{0 \leq t \leq T}$ such that $(\bX_{\eta(t)} , \P_{\bX_{\eta(t)}}, X_t, \P_{X_t}, Y_t,Z_t,\alpha_t)$ is $\F$-progressively measurable, $X=(X_t)_{0 \leq t \leq T}$ and $Y=(Y_t)_{0 \leq t \leq T}$ have càdlàg paths and
\begin{align*}
    \lVert \boldsymbol{\Theta} \rVert_{\mathfrak{S}} := \E \Big[ \underset{0 \leq t \leq T}{\text{ sup }} \big[ |X_t|^2 + |Y_t|^2 \big] + \int_{0}^{T} \big[ |Z_t|^2 + |\alpha_t|^2 \big] \d t  \Big]^{\frac{1}{2}} < \infty.
\end{align*}
Similarly, the notations $(\boldsymbol{\theta}_t)_{0 \leq t \leq T}$ will stand for processes $(\bX_{\eta(t)} , \P_{\bX_{\eta(t)}}, X_t, \P_{X_t}, \alpha_t)_{0 \leq t \leq T}$ with values in $(\R^d)^{\eta(t)} \times \Pc_2(( \R^d)^{\eta(t)}) \times \R^d \times \Pc_2(\R^d) \times A$. In the following, the processes $(\boldsymbol{\theta}_t)_{0 \leq t \leq T}$ will be the restriction of the processes $(\boldsymbol{\Theta}_t)_{0 \leq t \leq T} \in \mathfrak{S}$.
An input for \eqref{eq : dynamics_linear_cost_alpha_convex} will be a four-tuple $\Ic := \big( (\Ic_t^b, \Ic_t^{\sigma}, \Ic_t^f)_{0 \leq t \leq T}, (\Ic_{t_k}^g)_{1 \leq k \leq N} \big)$ where $(\Ic^b_t)_{0 \leq t \leq T}, (\Ic_t^{\sigma})_{0 \leq t \leq T}$ and $(\Ic_t^f)_{0 \leq t \leq T}$ are three square-integrable progressively measurable, processes with values respectively in $\R^d$, $\R^{d \times n}$ and $\R^d$ and $\Ic_{t_k}^g$ is an $\Fc_{t_k}$-measurable random variable with values in $\R^d$ for any $1 \leq k \leq N$. Such an input $\Ic$ is meant to be injected into the dynamics of \eqref{eq : dynamics_linear_cost_alpha_convex}, with $\Ic^b$ being plugged into the drift of the forward equation, $\Ic^{\sigma}$ into the volatility of the forward equation, $\Ic^f$ into the bounded variation term  of the backward process and $\Ic^g$ into the jump terms and  terminal condition of the backward process. The space of inputs is denoted by $\I$ and is endowed with the norm:
\begin{align*}
    \lVert \Ic \rVert_{\I} := \E \Big[ \int_{0}^{T} \big(| \Ic_t^b|^2 + | \Ic_t^{\sigma}|^2 + |\Ic_t^f|^2\big) \d t + \sum_{k=1}^{N} |\Ic_{t_k}^g|^2 \Big]^{\frac{1}{2}}.
\end{align*}
By misuse of notation, we shall denote $\phi(t,\bX, \P_{\bX}, X_t, \P_{X_t}, \alpha_t) = \phi(t,\boldsymbol{\theta}_t)$ and $H(t,\bX, \P_{\bX},X_t,\P_{X_t},Y_t,Z_t,\alpha_t) = H(t,\boldsymbol{\Theta}_t)$ for any $0 \leq t \leq T$ since the coefficients $\phi=(b,\sigma,f)$  of  the dynamics, the running cost and the Hamiltonian are $\Pi$-non-anticipative maps.
\begin{Definition}
    For any $\gamma \in [0,1], \xi \in L^2(\Omega;\Fc_0;\P;\R^d)$ and $\Ic \in \I$, the FBSDE
    {\footnotesize\begin{align*}
    \begin{cases}
        \d X_t  &= (\gamma b(t, \boldsymbol{\theta}_t) + \Ic_t^b \big) \d t + (\gamma \sigma(t,\boldsymbol{\theta}_t) + \Ic_t^{\sigma}) \d W_t, \\
        X_0 &= \xi, \\
        \d Y_t &= - \Big( \gamma \big( \partial_x H(t,\boldsymbol{\Theta}_t) + \tilde{\E} \big[ \partial_{\tilde{x}} \frac{\delta}{\delta m} H(t, \tilde{\boldsymbol{\Theta}}_t)(X_t) \big] \big)  + \Ic_t^{f} \Big) \d t + Z_t \d W_t, ~t_{k-1} \leq t < t_k, ~ k \in \llbracket 1, N-1 \rrbracket, \text{ and } t_{N-1} \leq t \leq t_N, \\
        Y_{t_k} - Y_{t_k^{-}} &=  - \bigg( \gamma \E \Big[ \int_{t_k}^{T} \Big( \partial_{x_k} H(t,\boldsymbol{\Theta}_t) + \tilde{\E} \big[ \partial_{\tilde{x}_k} \frac{\delta}{\delta \bm} H(t, \tilde{\boldsymbol{\Theta}}_t)(\bX) \big] \Big) \d t + \partial_{x_k} g(\bX,\P_{\bX}) + \tilde{\E} \big[ \partial_{\tilde{x}_k} \frac{\delta}{\delta \bm} g(\tilde{\bX}, \P_{\bX})(\bX) \big]  \Big| \Fc_{t_k} \Big] + \Ic_{t_k}^g \bigg), \\
        Y_{t_N} &= \gamma \Big( \partial_{x_N} g(\bX, \P_{\bX}) + \tilde{\E} \big[ \partial_{\tilde{x}_N} \frac{\delta}{\delta \bm} g(\tilde{\bX}, \P_{\bX})(\bX) \big] \Big) + \Ic_{t_N}^g,
    \end{cases}
    \end{align*}}
    with $\alpha_t = \hat{a}(t,\bX,\P_{\bX},X_t,$ $\P_{X_t},Y_t,Z_t)$, $0 \leq t \leq T$, is referred to as $\Ec(\gamma,\xi, \Ic)$. Whenever $(X_t,Y_t,Z_t)_{0 \leq t \leq T}$ is a solution, the full process
    $\boldsymbol{\Theta} := \big(\boldsymbol{\Theta}_t = (\bX_{\eta(t)}, \P_{\bX_{\eta(t)}}, X_t, \P_{X_t} , Y_t,Z_t, \alpha_t)\big)_{0 \leq t \leq  T} $ is referred to as the extended solution.
\end{Definition}

\begin{Definition}
    For any $\gamma \in [0,1]$, we say that $(\Sc_{\gamma})$ holds if for any $\xi \in L^2(\Omega;\Fc_0;\P;\R^d)$ and any $\Ic \in \I$, the FBSDE $\Ec(\gamma,\xi,\Ic)$ has a unique extended solution in $\mathfrak{S}$.
\end{Definition}
\begin{Lemma}\label{lemma : stability_solution}
    Let $\gamma \in [0,1]$ such that $(\Sc_{\gamma})$ holds. Then, there exists a constant $C$ independent of $\gamma$ such that for any $\xi,\xi' \in L^2(\Omega;\Fc_0,\P;\R^d)$ and $\Ic,\Ic' \in \I$,  the respective extended solutions $\boldsymbol{\Theta}$ and $\boldsymbol{\Theta}'$ of $\Ec(\gamma,\xi,\Ic)$ and $\Ec(\gamma,\xi', \Ic')$ satisfy
    \begin{align*}
        \lVert \boldsymbol{\Theta} - \boldsymbol{\Theta}' \rVert_{\mathfrak{S}}  \leq C \Big( \E \big[ | \xi - \xi'|^2 \big]^{\frac{1}{2}} + \lVert \Ic - \Ic' \rVert_{\I} \Big).
    \end{align*}
\end{Lemma}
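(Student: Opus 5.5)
This is the stability estimate of the continuation method, adapted from \cite[Lemma 3.2]{carmona_delarue_2015} and \cite{peng1999fully}. The plan is to combine a duality identity between the forward and backward differences with the $L$-convexity of Assumption \ref{assumption:solvability_LQ}. This yields an a priori bound on $\E\int_0^T|\alpha_t-\alpha'_t|^2\d t$, and standard forward and backward estimates then control everything else.

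Write $\delta X=X-X'$, $\delta Y=Y-Y'$, $\delta Z=Z-Z'$ and $\delta\alpha=\alpha-\alpha'$.

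\textbf{Step 1 (duality).} I will apply It\^o's product formula to $\delta Y\cdot\delta X$ on each interval $[t_k,t_{k+1})$ and sum over $k$, exactly as in the proof of Corollary \ref{corollary:calculus_of_variations}. The jumps of $\delta Y$ at $t_k$ produce terms $-\E[\Delta\delta Y_{t_k}\cdot\delta X_{t_k}]$. Since $\delta X_{t_k}$ is $\Fc_{t_k}$-measurable, the conditional expectations in the jump terms can be removed. The Fubini argument of Lemma \ref{lemma : intermediate_computation}, combined with the relabeling identity \eqref{eq : relabel_g}, then rewrites these terms as time integrals of $\partial_{\bx}H\cdot\delta\bX$ plus the $\tilde\E[\partial_{\tilde\bx}\tfrac{\delta}{\delta\bm}H]$ terms, together with the $g$-terms. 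The result is an identity
\begin{align*}
\E[\delta Y_T\cdot\delta X_T]-\sum_{k=1}^{N-1}\E[\Delta\delta Y_{t_k}\cdot\delta X_{t_k}]=\E[\delta Y_0\cdot\delta\xi]+\E\int_0^T(\cdots)\,\d t.
\end{align*}
Here $(\cdots)$ collects $\gamma$ times the first-order expansion differences of $H$ in the $(\bx,\boldsymbol{\mu},x,\mu)$ variables, plus the cross terms involving $\Ic-\Ic'$.

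\textbf{Step 2 (convexity).} Because $b$ and $\sigma$ are linear, the $y,z$-parts of $H$ cancel, as noted in Remark \ref{rmk : hamiltonian_L_convexity}. I will apply the $L$-convexity of $H$ twice, once from $\boldsymbol{\Theta}$ to $\boldsymbol{\Theta}'$ and once from $\boldsymbol{\Theta}'$ to $\boldsymbol{\Theta}$, and add the two inequalities, using the i.i.d.\ copies for the measure terms. I will do the same for $g$, which is convex. Using the minimality conditions $\partial_aH(\boldsymbol{\Theta}_t)\cdot(a-\alpha_t)\ge0$ evaluated at $a=\alpha'_t$, and symmetrically for $\alpha'$, this should give
\begin{align*}
2\gamma\lambda\,\E\int_0^T|\delta\alpha_t|^2\d t\le \E[\delta Y_0\cdot\delta\xi]+\text{(terms bilinear in $\delta\Theta$ and $\Ic-\Ic'$)}.
\end{align*}
Young's inequality then bounds the right-hand side by $\epsilon\|\delta\boldsymbol{\Theta}\|_{\mathfrak S}^2+C_\epsilon(\E|\delta\xi|^2+\|\Ic-\Ic'\|_{\I}^2)$.

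\textbf{Step 3 (closing the estimate).} The forward equation has Lipschitz coefficients, with $\Pi$-dependence handled through the running supremum as in Proposition \ref{thm : existence_unicity_X_SDE}. A Gronwall argument therefore gives
\begin{align*}
\E\sup|\delta X|^2\le C\Big(\E|\delta\xi|^2+\gamma\E\int|\delta\alpha|^2+\|\Ic-\Ic'\|^2\Big).
\end{align*}
For the backward part I will proceed interval by interval from $t_N$ down to $t_1$. On each interval, BSDE estimates bound $\E\sup|\delta Y|^2+\E\int|\delta Z|^2$ by the terminal value at $t_k$ plus the driver. The terminal value at $t_k$ equals $\delta Y_{t_k}$ minus the jump, and the jump is a conditional expectation of quantities already controlled on $[t_k,T]$, through Jensen and the Lipschitz bounds of Assumption \ref{assumption:solvability_LQ}(2). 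The driver is Lipschitz in $(\delta X,\delta Y,\delta Z,\delta\alpha)$ and in their means. Finitely many steps yield
\begin{align*}
\|\delta\boldsymbol{\Theta}\|_{\mathfrak S}^2\le C\Big(\E|\delta\xi|^2+\gamma\E\int|\delta\alpha|^2+\|\Ic-\Ic'\|^2\Big),
\end{align*}
with $C$ independent of $\gamma\in[0,1]$.

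The main obstacle is the $\gamma$-weighting. Step 2 only controls $\gamma\E\int|\delta\alpha|^2$, and since $\alpha=\hat a(\cdot,Y,Z)$ the estimate of Step 3 also involves $\delta\alpha$. I would follow Carmona–Delarue. Plug the Step 3 bound into Step 2 with $\epsilon$ small relative to $\lambda$, which bounds $\gamma\E\int|\delta\alpha|^2$ by the data. Then use the Lipschitz property of $\hat a$ to express the remaining $\delta\alpha$ through $(\delta X,\delta Y,\delta Z)$ inside the backward Gronwall, and absorb. Keeping this absorption uniform in $\gamma$, and handling the jump cross terms in Step 1 carefully, is the delicate point.
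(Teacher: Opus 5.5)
Your proposal is correct and follows essentially the same route as the paper. The shared steps are: It\^o duality across the jump times; convexity of $g$ and $H$ used in both directions; Young's inequality to bound $\gamma\,\E\int_0^T|\alpha_t-\alpha'_t|^2\,\d t$; a forward Gr\"onwall estimate; a backward recursion over the subintervals; and a final absorption with $\epsilon$ small. The paper writes a one-sided inequality for $\gamma(J(\alpha')-J(\alpha))$, swaps the roles of $\boldsymbol{\Theta}$ and $\boldsymbol{\Theta}'$ and sums, which is algebraically the same as your direct use of $\delta Y\cdot\delta X$. One small simplification: the backward driver and jump differences already carry the factor $\gamma$ in front of $\delta\alpha$, so you only need the Lipschitz property of $\hat a$ at the very end, to control the unweighted $\alpha$-component of $\lVert\boldsymbol{\Theta}-\boldsymbol{\Theta}'\rVert_{\mathfrak S}$.
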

\begin{proof}
    We use the same notation as introduced before, i.e. $$\boldsymbol{\Theta} = (\bX_{\eta(t)}, \P_{\bX_{\eta(t)}},X_t,\P_{X_t}, Y_t,Z_t,\alpha_t)_{0 \leq t \leq T} \text{ and } \boldsymbol{\theta}_t = (\bX_{\eta(t)}, \P_{\bX_{\eta(t)}}, X_t,\P_{X_t}, \alpha_t).$$ We have similarly to  computations in the proof of Theorem \ref{thm : sufficient_condition_optimality}
    \begin{align}\label{eq : intermediate_compute_1}
        \E \Big[\sum_{i=1}^{N-1} &- (X'_{t_i} - X_{t_i}) \cdot \Delta Y_{t_i} \Big] + \E \Big[ Y_{t_N} \cdot (X'_{t_N} - X_{t_N}) \Big] \\
        &=  \E \Big[ (\xi'-\xi) \cdot Y_0 \Big] - \gamma  \bigg( \E \Big[ \int_{0}^{T} \big( \partial_x H(t, \boldsymbol{\Theta}_t) + \tilde{\E} \big[ \partial_{\tilde{x}} \frac{\delta}{\delta m} H(t,\tilde{\boldsymbol{\Theta}}_t)(X_t) \big] \big) \cdot (X'_t - X_t) \d t \Big]  \\
        & \quad \quad \quad \quad - \E \Big[ \int_{0}^{T} ( b(t,\boldsymbol{\theta}'_t) - b(t, \boldsymbol{\theta}_t) ) \cdot Y_t + Z_t \cdot (\sigma(t,\boldsymbol{\theta}'_t) - \sigma(t, \boldsymbol{\theta}_t)  ) \d t \Big] \bigg)  \\ 
        &\quad \quad \quad \quad -  \bigg( \E \Big[ \int_{0}^{T} \Ic_t^f  \cdot (X_t' - X_t) + (\Ic_t^{b} - \Ic_t^{b'}) \cdot Y_t + Z_t \cdot (\Ic_t^{\sigma} - \Ic_t^{\sigma'}) \d t  \Big] \bigg)  \\ 
        &= T_0 - \gamma T_1 - T_2,  \end{align}
where we  denoted
\begin{align*}
\begin{cases}
    T_0 := \E \Big[ (\xi'- \xi) \cdot Y_0 \Big], \\
    T_1 :=   \E \bigg[ \int_{0}^{T} \Big( \big( \partial_x H(t, \boldsymbol{\Theta}_t) + \tilde{\E} \big[ \partial_{\tilde{x}} \frac{\delta}{\delta m} H(t,\tilde{\boldsymbol{\Theta}}_t)(X_t) \big] \big) \cdot (X'_t - X_t) - \big( b(t,\boldsymbol{\theta}'_t) - b(t, \boldsymbol{\theta}_t) \big) \cdot Y_t - Z_t \cdot \big(\sigma(t,\boldsymbol{\theta}'_t) - \sigma(t, \boldsymbol{\theta}_t)  \big) \Big) \d t \bigg] , \\
    T_2 :=  \E \Big[ \int_{0}^{T} \Ic_t^f  \cdot (X_t' - X_t) + (\Ic_t^{b} - \Ic_t^{b'}) \cdot Y_t + Z_t \cdot (\Ic_t^{\sigma} - \Ic_t^{\sigma'}) \d t  \Big]  . \\
\end{cases}
\end{align*}
Using the convexity assumption on $g$, we have
\begin{align}\label{eq : intermediate_compute_2}
  \E &\Big[ \sum_{i=1}^{N-1} - \Delta Y_{t_i} \cdot (X'_{t_i} - X_{t_i}) + Y_{t_N} \cdot (X'_{t_N} -X_{t_N})\Big]  \\&= \gamma  \E \Big[ \sum_{i=1}^{N}  \Big(\partial_{x_i} g(\bX, \P_{\bX}) + \tilde{\E} \big[ \partial_{\tilde{x}_i} \frac{\delta}{\delta \bm} g(\tilde{\bX},\P_{\bX})(\bX) \big] \Big) \cdot ( X'_{t_i} - X_{t_i}) \Big] + \E \Big[ \sum_{i=1}^{N} \Ic_{t_i}^g \cdot (X'_{t_i} - X_{t_i}) \Big] ,  \\
  &\quad +  \gamma \E \bigg[ \sum_{k=1}^{N-1} \int_{t_k}^{T} \Big( \partial_{x_k} H(t,\boldsymbol{\Theta}_t) + \tilde{\E} \big[ \partial_{\tilde{x}_k} \frac{\delta}{\delta \bm} H(t, \tilde{\boldsymbol{\Theta}}_t)(\bX) \big] \Big) \cdot (X'_{t_k} - X_{t_k}) \d t \bigg]  \\
  &\leq \gamma \E \Big[ g({\bX'}, \P_{{\bX'}}) - g(\bX, \P_{\bX}) \big] + \E \Big[ \sum_{i=1}^{N} \Ic_{t_i}^g \cdot (X'_{t_i} - X_{t_i}) \Big]  \\
  &\quad + \gamma \E \bigg[ \sum_{k=1}^{N-1} \int_{t_k}^{T} \Big( \partial_{x_k} H(t,\boldsymbol{\Theta}_t) + \tilde{\E} \big[ \partial_{\tilde{x}_k} \frac{\delta}{\delta \bm} H(t, \tilde{\boldsymbol{\Theta}}_t)(\bX) \big] \Big) \cdot (X'_{t_k} - X_{t_k}) \d t \bigg].  \end{align}
Now, combining \eqref{eq : intermediate_compute_1} and \eqref{eq : intermediate_compute_2} , we end up with
\begin{align*}
   \gamma \E \big[ g({\bX'},\P_{{\bX'}}) - g(\bX, \P_{\bX}) \big] &\geq  - \gamma \E \Big[ \int_{0}^{T} \big( \partial_{x} H(t,\boldsymbol{\Theta}_t) + \tilde{\E} \big[ \partial_{\tilde{x}} \frac{\delta}{\delta m} H(t, \tilde{\boldsymbol{\Theta}}_t)(X_t) \big] \big) \cdot (X'_t - X_t) \d t  \Big] \\
   & \quad -  \gamma \E \bigg[ \sum_{k=1}^{N-1} \int_{t_k}^{T} \Big( \partial_{x_k} H(t,\boldsymbol{\Theta}_t) + \tilde{\E} \big[ \partial_{\tilde{x}_k} \frac{\delta}{\delta \bm} H(t, \tilde{\boldsymbol{\Theta}}_t)(\bX) \big] \Big) \cdot  (X'_{t_k} - X_{t_k}) \d t \bigg] \\
   &\quad +  \gamma \E \Big[ \int_{0}^{T} \big(b(t,\boldsymbol{\theta}_t') - b(t,\boldsymbol{\theta}_t) \big) \cdot Y_t + Z_t \cdot \big(\sigma(t,\boldsymbol{\theta}'_t) - \sigma(t,\boldsymbol{\theta}_t) \big) \d t \Big] \\
   &\quad + T_0 - T_2 + \E \Big[ \sum_{i=1}^{N} \Ic_{t_i}^g \cdot (X_{t_i} - X'_{t_i}) \Big].
\end{align*}
Recalling the definition of $J(\alpha)$, and the $L$-convexity assumption of the Hamiltonian in Remark \ref{rmk : hamiltonian_L_convexity} combined with the optimality condition of $(\alpha_t)_{0 \leq t \leq T}$ and the strict convexity of $H$   and using similar arguments as one used in the proof of Theorem \ref{thm : sufficient_condition_optimality}, we have 
\begin{align}\label{eq : ineq_cost_functional_J}
    \gamma \big(J(\alpha') - J(\alpha) \big) \geq \gamma \lambda \E  \Big[ \int_{0}^{T} |\alpha_t - \alpha'_t|^2 \d t \Big] + T_0 - T_2 + \E \Big[ \sum_{i=1}^{N} \Ic_{t_i}^g \cdot (X_{t_i}  - X'_{t_i}) \Big].
\end{align}
Now, by reversing the roles of $\alpha$ and $\alpha'$ in \eqref{eq : ineq_cost_functional_J} and by denoting $T_0'$ and $T_2'$ the associated terms and summing the inequalities, we end up with
\begin{align*}
    2 \gamma \lambda \E \Big[ \int_{0}^{T} |\alpha_t - \alpha'_t|^2 \d t \Big] +  T_0 + T_0'  - (T_2 + T_2') + \E \Big[ \sum_{i=1}^{N} ( \Ic_{t_i}^g - \Ic_{t_i}^{g'}) \cdot (X_{t_i} - X'_{t_i}) \Big] \leq 0,
\end{align*}
and where the sums $T_0 + T_0'$ and $T_2 + T_2'$ read
\begin{align*}
\begin{cases}
    T_2 +  T_2' &= \E \Big[ \int_{0}^{T} - (\Ic_t^f - \Ic_t^{f'}) \cdot (X_t - X'_t) + (\Ic_t^b - \Ic_t^{b'}) \cdot (Y_t - Y'_t) + (\Ic_t^{\sigma} - \Ic_t^{\sigma'}) \cdot (Z_t - Z'_t) \d t \Big], \\
    T_0 + T_0' &= - \E \Big[ (\xi-\xi') \cdot (Y_0 - Y_0') \Big].
\end{cases}
\end{align*}
Now, using Young's inequality, there exists a constant $C$ independent of $\gamma$ such that for any $\epsilon > 0$
\begin{align*}
    \gamma  \E \Big[ \int_{0}^{T} |\alpha_t - \alpha'_t|^2 \d t \Big] \leq \epsilon \lVert \boldsymbol{\Theta} - \boldsymbol{\Theta}' \rVert^2_{\mathfrak{S}} + \frac{C}{\epsilon} \big( \E\big[|\xi - \xi'|^2 \big] + \lVert \Ic - \Ic' \rVert^2_{\I} \big).
\end{align*}
\noindent \textbf{Forward estimate.} Applying It\^o's formula to $|X_t - X'_t|^2$, using the Lipschitz property of $b$ and $\sigma$ (Assumption \ref{assumption:solvability_LQ}) and Young's inequality, we obtain for any $t \in [0,T]$
\begin{align*}
    |X_t - X'_t|^2 &\leq |\xi - \xi'|^2 + C \int_{0}^{t} \Big( \gamma^2 |\bX_{\eta(s)} - \bX'_{\eta(s)}|^2 + \gamma^2 |X_s - X'_s|^2 + \gamma^2 |\alpha_s - \alpha'_s|^2  + |\Ic_s^b - \Ic_s^{b'}|^2 + |\Ic_s^{\sigma} - \Ic_s^{\sigma'}|^2 \Big) \d s \\
    &\quad + 2 \int_{0}^{t} \big( X_s - X'_s \big) \cdot \Big( \sigma(s,\boldsymbol{\theta}_s) - \sigma(s,\boldsymbol{\theta}'_s) + \Ic_s^{\sigma} - \Ic_s^{\sigma'} \Big) \d W_s.
\end{align*}
Taking the supremum over $[0,t]$, using Doob's maximal and BDG inequalities on the stochastic integral term, then Young's inequality to absorb the resulting $\E\big[\sup_{0\leq s \leq t}|X_s - X'_s|^2\big]$ on the left-hand side, and the bound $$\E\big[|\bX_{\eta(s)} - \bX'_{\eta(s)}|^2\big] \leq N \E\big[\sup_{0 \leq r \leq s} |X_r - X'_r|^2 \big],$$ we get using $\gamma^2 \leq \gamma$ since $\gamma \in [0,1]$, and
\begin{align*}
    \E \Big[ \underset{0 \leq s \leq t}{\text{sup }} |X_s - X'_s|^2 \Big] &\leq \E \big[ |\xi - \xi'|^2 \big] + C \int_{0}^{t} \E \Big[ \underset{0 \leq r \leq s}{\text{sup}} |X_r - X'_r|^2 \Big] \d s + C \gamma \E \Big[ \int_{0}^{T} |\alpha_s - \alpha'_s|^2 \d s \Big] + C \lVert \Ic - \Ic' \rVert^2_{\I}.
\end{align*}
Grönwall's inequality then yields
\begin{align}\label{eq : forward_stability}
    \E \Big[ \underset{0 \leq t \leq T}{\text{sup }} |X_t - X'_t|^2 \Big] \leq  C \bigg( \E \big[ |\xi - \xi'|^2 \big] + \gamma \E \Big[ \int_{0}^{T} | \alpha_t - \alpha'_t|^2  \d t \Big] + \lVert \Ic - \Ic' \rVert^2_{\I} \bigg),
\end{align}
where $C$ can be chosen independent of $\gamma$ since $0 \leq \gamma \leq 1$.

\noindent \textbf{Backward estimate.} Applying It\^o's formula to $|Y_t - Y'_t|^2$ separately on each subinterval $[t_i,t_{i+1})$ and on $[t_{N-1},t_N]$, summing the jump contributions at the deterministic times $(t_k)_{1 \leq k \leq N-1}$ and using the Lipschitz property of the driver (through the $L$-convex Hamiltonian, see Remark \ref{rmk : hamiltonian_L_convexity}), Young's inequality, Doob's maximal inequality applied to the martingale term $\int_0^{\cdot} (Y_s - Y'_s) \cdot (Z_s - Z'_s) \d W_s$, and the BDG inequality, we deduce, by the backward recursion over the subintervals detailed below, the stability estimate
\begin{align}\label{eq : backward_stability}
    &\E \Big[ \underset{0 \leq t \leq T}{\text{sup }} | Y_t - Y'_t|^2 + \int_{0}^{T} | Z_t - Z'_t|^2 \d t \Big] \leq C \bigg(\gamma \E \Big[ \underset{0 \leq t \leq T}{\text{ sup }} |X_t - X'_t|^2  + \int_{0}^{T} |\alpha_t - \alpha'_t|^2 \d t \Big] + \lVert \Ic - \Ic' \rVert^2_{\I} \bigg),  \end{align}
for a constant $C$ independent of $\gamma$. The jump terms are controlled via Jensen's inequality applied to the conditional expectations together with the Lipschitz property of $\partial_{\bx} g$ and of $\partial_{\bx} H + \tilde{\E}[\partial_{\tilde{\bx}}\frac{\delta}{\delta \bm} H(\tilde{\boldsymbol{\Theta}}_t)(\bX)]$, see  Assumption \ref{assumption:solvability_LQ} (2). Writing $H = b\cdot y + \sigma \cdot z + f$ and using the boundedness of $\partial_{x_k} b$ and $\partial_{x_k} \sigma$ together with the Lipschitz property of $\partial_{x_k}(b,\sigma,f)$ in $(\bx,\boldsymbol{\mu},x,\mu,a)$, we obtain for each $1 \leq k \leq N$
\begin{align*}
    \E \big[|\Delta Y_{t_k} - \Delta Y'_{t_k}|^2 \big] &\leq C \Big( \gamma \E\big[ |\bX - \bX'|^2\big] + \gamma \E\Big[ \int_{t_k}^{T} \big( |\bX_{\eta(t)} - \bX'_{\eta(t)}|^2 + |X_t - X'_t|^2 \\
    &\qquad + |Y_t - Y'_t|^2 + |Z_t - Z'_t|^2 + |\alpha_t - \alpha'_t|^2 \big) \d t \Big] + \E\big[|\Ic_{t_k}^g - \Ic_{t_k}^{g'}|^2\big] \Big),
\end{align*}
and similarly for the terminal condition. Note that the jump at $t_k$ is a conditional expectation of the solution on $[t_k,T]$, so \eqref{eq : backward_stability} does not follow from a stability estimate with exogenous jump data; it is instead obtained by a backward recursion over the subintervals. On $[t_{N-1}, t_N]$, the terminal data and the Lipschitz-in-$(y,z)$ driver give the standard bound for $\E \big[ \sup_{t_{N-1} \leq t \leq t_N} |Y_t - Y'_t|^2 + \int_{t_{N-1}}^{t_N} |Z_t - Z'_t|^2 \d t \big]$ in terms of $X - X'$, $\alpha - \alpha'$ and $\Ic - \Ic'$, as in \cite[Lemma 6.23]{carmona_delarue_2015}. The jump bound above at $t_{N-1}$ then involves only quantities that are already estimated, and provides the terminal data $Y_{t_{N-1}^-} - Y'_{t_{N-1}^-}$ for the interval $[t_{N-2}, t_{N-1})$. Iterating backward over the $N$ subintervals, whose number is fixed, yields \eqref{eq : backward_stability} with a constant $C = C(N)$ independent of $\gamma$.

\noindent \textbf{Conclusion.} Combining \eqref{eq : forward_stability} and \eqref{eq : backward_stability} with \eqref{eq : forward_stability} inserted in the right-hand side of \eqref{eq : backward_stability}, and recalling the definition of $\lVert \cdot \rVert_{\mathfrak{S}}$, we obtain
\begin{align}\label{eq : stability_Theta_intermediate}
    \lVert \boldsymbol{\Theta} - \boldsymbol{\Theta}' \rVert^2_{\mathfrak{S}} \leq C \bigg( \E \big[|\xi - \xi'|^2\big] + \gamma \E \Big[ \int_{0}^{T} |\alpha_t - \alpha'_t|^2 \d t \Big] + \lVert \Ic - \Ic' \rVert^2_{\I} \bigg).
\end{align}
Combining \eqref{eq : stability_Theta_intermediate} with the Young-inequality bound derived above from the $L$-convexity of the Hamiltonian, namely
\begin{align*}
    \gamma  \E \Big[ \int_{0}^{T} |\alpha_t - \alpha'_t|^2 \d t \Big] \leq \epsilon \lVert \boldsymbol{\Theta} - \boldsymbol{\Theta}' \rVert^2_{\mathfrak{S}} + \frac{C}{\epsilon} \big( \E[|\xi - \xi'|^2] + \lVert \Ic - \Ic' \rVert^2_{\I} \big),
\end{align*}
we obtain
\begin{align*}
    \big( 1 - C \epsilon \big) \lVert \boldsymbol{\Theta} - \boldsymbol{\Theta}' \rVert^2_{\mathfrak{S}} \leq \frac{C}{\epsilon} \Big( \E \big[ |\xi - \xi'|^2 \big] + \lVert \Ic - \Ic' \rVert^2_{\I} \Big).
\end{align*}
Choosing $\epsilon > 0$ such that $C \epsilon < 1$, we conclude that
\begin{align*}
    \lVert \boldsymbol{\Theta} - \boldsymbol{\Theta}' \rVert^2_{\mathfrak{S}} \leq C \Big( \E \big[ |\xi - \xi'|^2 \big] + \lVert \Ic - \Ic' \rVert^2_{\I} \Big),
\end{align*}
for a constant $C$ independent of $\gamma \in [0,1]$, which is the desired stability estimate.
\end{proof}
\begin{Lemma}\label{lemma : Sgammarecursive}
    There exists $\delta_0 > 0$ such that if $(\Sc_{\gamma})$ holds for some $\gamma \in [0,1)$, then $(\Sc_{\gamma + \eta})$ also holds for any $\eta \in (0,\delta_0]$ satisfying $\gamma + \eta \leq 1$.
\end{Lemma}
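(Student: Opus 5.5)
The plan is the standard continuation (Picard contraction) argument of \cite{peng1999fully,carmona_delarue_2015}, built on the stability estimate of Lemma \ref{lemma : stability_solution}. Fix $\gamma$ such that $(\Sc_\gamma)$ holds, $\eta \in (0,\delta_0]$ with $\gamma+\eta\le 1$, an initial condition $\xi$ and an input $\Ic \in \I$. We must show that $\Ec(\gamma+\eta,\xi,\Ic)$ has a unique extended solution in $\mathfrak{S}$.

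Given $\boldsymbol{\Theta}=(\bX_{\eta(t)},\P_{\bX_{\eta(t)}},X_t,\P_{X_t},Y_t,Z_t,\alpha_t)_{0\le t\le T}\in\mathfrak{S}$, we build a modified input $\Ic^{\boldsymbol{\Theta}}$ that moves the extra $\eta$-part of the coefficients into the input:
\begin{align*}
\Ic^{b,\boldsymbol{\Theta}}_t &= \eta\, b(t,\boldsymbol{\theta}_t) + \Ic^b_t, \qquad \Ic^{\sigma,\boldsymbol{\Theta}}_t = \eta\, \sigma(t,\boldsymbol{\theta}_t) + \Ic^\sigma_t,\\
\Ic^{f,\boldsymbol{\Theta}}_t &= \eta\big(\partial_x H(t,\boldsymbol{\Theta}_t)+\tilde{\E}[\partial_{\tilde x}\tfrac{\delta}{\delta m}H(t,\tilde{\boldsymbol{\Theta}}_t)(X_t)]\big) + \Ic^f_t,
\end{align*}
and, for $k\le N-1$, $\Ic^{g,\boldsymbol{\Theta}}_{t_k}$ equal to $\eta$ times the conditional expectation appearing in the jump at $t_k$, plus $\Ic^g_{t_k}$. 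At $t_N$ we take $\Ic^{g,\boldsymbol{\Theta}}_{t_N} = \eta(\partial_{x_N}g + \tilde{\E}[\partial_{\tilde x_N}\tfrac{\delta}{\delta \bm} g])+\Ic^g_{t_N}$.

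First we check that $\Ic^{\boldsymbol{\Theta}}\in\I$. Each $\Ic^{g,\boldsymbol{\Theta}}_{t_k}$ is $\Fc_{t_k}$-measurable because it is a conditional expectation, and square integrability follows from the linear growth of $b,\sigma,\partial H,\partial g$ and of their measure derivatives under Assumption \ref{assumption:solvability_LQ}. By $(\Sc_\gamma)$, $\Ec(\gamma,\xi,\Ic^{\boldsymbol{\Theta}})$ has a unique extended solution, which we call $\Phi(\boldsymbol{\Theta})$. A fixed point of $\Phi$ is exactly an extended solution of $\Ec(\gamma+\eta,\xi,\Ic)$, and conversely; note that the control is recomputed as $\hat a$ of the output, and at a fixed point it coincides with the input control. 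Existence and uniqueness for $(\Sc_{\gamma+\eta})$ therefore reduce to showing that $\Phi$ is a contraction on $\mathfrak{S}$.

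For $\boldsymbol{\Theta},\boldsymbol{\Theta}'\in\mathfrak{S}$, Lemma \ref{lemma : stability_solution}, with the same $\xi$ and a constant $C$ independent of $\gamma$, gives
\[
\lVert\Phi(\boldsymbol{\Theta})-\Phi(\boldsymbol{\Theta}')\rVert_{\mathfrak{S}}\le C\lVert \Ic^{\boldsymbol{\Theta}}-\Ic^{\boldsymbol{\Theta}'}\rVert_{\I}.
\]
It remains to show $\lVert \Ic^{\boldsymbol{\Theta}}-\Ic^{\boldsymbol{\Theta}'}\rVert_{\I}\le C'\eta\lVert\boldsymbol{\Theta}-\boldsymbol{\Theta}'\rVert_{\mathfrak{S}}$.

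\begin{itemize}
\item The drift and volatility differences are linear in $(\bX_{\eta(t)},\E[\cdot],X_t,\E[X_t],\alpha_t)$ with bounded coefficients. Their norm is therefore bounded by $\eta$ times $\E[\sup|X-X'|^2]$ times a factor $N$ from the grid values, plus the $\alpha$ term.
\item The $f$-input difference is handled by Lipschitz continuity of $\partial_x f$, the $L^2$-Lipschitz bound on $\partial_{\tilde x}\tfrac{\delta}{\delta m}f$ from Assumption \ref{assumption:solvability_LQ}(2), and the linearity in $(y,z,\E[Y],\E[Z])$. We use $\Wc_2\le L^2$-distance through the coupling given by the two solutions.
\item For the jump inputs, conditional Jensen gives $\E|\E[\zeta|\Fc_{t_k}]|^2\le\E|\zeta|^2$. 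Cauchy--Schwarz in time on $\int_{t_k}^T$ then reduces each term to the same Lipschitz bounds together with those on $\partial_{\bx}g$ and $\partial_{\tilde{\bx}}\tfrac{\delta}{\delta\bm}g$.
\end{itemize}
The constant $C'$ depends only on $L,T,N$ and the bounds on the linear coefficients.

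Finally, set $\delta_0 = 1/(2CC')$, which does not depend on $\gamma$. Then $\Phi$ is a $\tfrac12$-contraction on the complete space $\mathfrak{S}$, and Banach's fixed point theorem gives a unique fixed point, hence $(\Sc_{\gamma+\eta})$. Completeness of $\mathfrak{S}$ with càdlàg $Y$ holds because $\S^2$ with càdlàg paths is complete.

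The main obstacle is the jump part of the input. The jump at $t_k$ involves $(Y,Z)$ on the whole of $[t_k,T]$ through a conditional expectation, so we must check that absorbing it into the exogenous $\Fc_{t_k}$-measurable datum $\Ic^g_{t_k}$ is legitimate. This is where it matters that Lemma \ref{lemma : stability_solution} was proved allowing arbitrary $\Fc_{t_k}$-measurable jump inputs with a $\gamma$-uniform constant.

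A secondary point is the $Z$-dependence. The norm of $\mathfrak{S}$ only controls $Z$ in $\H^2$, while the term $\eta\int_{t_k}^T E_k^\top Z_t\,\d t$ enters the jump input. This is fine because Cauchy--Schwarz bounds it by $\sqrt T\,\lVert Z-Z'\rVert_{\H^2}$.
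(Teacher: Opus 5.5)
Your proposal is correct and follows the same continuation argument as the paper. You move the extra $\eta$-portion of the drift, volatility, driver, jumps and terminal condition into a modified input, identify fixed points of the resulting map $\Phi$ with extended solutions of $\Ec(\gamma+\eta,\xi,\Ic)$, and get a $\gamma$-independent contraction from Lemma \ref{lemma : stability_solution}; the one addition is that you explicitly justify $\lVert \Ic^{\boldsymbol{\Theta}}-\Ic^{\boldsymbol{\Theta}'}\rVert_{\I}\le C'\eta\lVert\boldsymbol{\Theta}-\boldsymbol{\Theta}'\rVert_{\mathfrak{S}}$ (including the conditional-Jensen treatment of the jump inputs), which the paper leaves implicit.
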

\begin{proof}
    The proof is similar in spirit to the continuation-method argument of \cite{carmona_delarue_2015}. Indeed, we define the map $\Phi$ from $\mathfrak{S}$ into $\mathfrak{S}$ as follows. Given a process $\boldsymbol{\Theta} \in \mathfrak{S}$, we denote by $\boldsymbol{\Theta}'$ the extended solution to the FBSDE $\Ec(\gamma,\xi,\Ic')$ with
    \begin{align*}
    \begin{cases}
          \Ic_t^{b,'} &:= \eta b(t,\boldsymbol{\theta}_t) + \Ic_t^{b}, \\
    \Ic_t^{\sigma,'} :&= \eta \sigma(t,\boldsymbol{\theta}_t)+ \Ic_t^{\sigma}, \\
    \Ic_t^{f,'} &:= \eta \partial_x H(t,\boldsymbol{\Theta}_t ) + \eta  \tilde{\E} \Big[ \partial_{\tilde{x}} \frac{\delta}{\delta m} H(t,\tilde{\boldsymbol{\Theta}}_t)(X_t) \Big] + \Ic_t^f, \\
    \Ic_{t_k}^{g,'} &:= \E \bigg[ \int_{t_k}^{T} \eta \Big( \partial_{x_k} H(t,\boldsymbol{\Theta}_t) + \tilde{\E} \big[ \partial_{\tilde{x}_k} \frac{\delta}{\delta \bm} H(t,\tilde{\boldsymbol{\Theta}}_t)(\bX) \big] \Big) \d t +  \eta \Big(\partial_{x_k} g(\bX,\P_{\bX}) +  \tilde{\E} \Big[ \partial_{\tilde{x}_k} \frac{\delta}{\delta \bm}g(\tilde{\bX},\P_{\bX})(\bX)  \Big] \Big) \bigg| \Fc_{t_k} \bigg] + \Ic_{t_k}^{g}.
    \end{cases}
    \end{align*}
    It is clear that $\Ic' \in \I$ and by assumption $\boldsymbol{\Theta}' \in \mathfrak{S}$ and is uniquely defined. Therefore, the map $\Phi$ is a well defined map from the complete metric space $(\mathfrak{S}, \lVert \cdot \rVert_{\mathfrak{S}})$ into itself. Moreover, by construction, it is clear that $\boldsymbol{\Theta} \in \mathfrak{S}$ is a fixed point for $\Phi$ if and only if $\boldsymbol{\Theta}$ is an extended solution to $\Ec(\gamma + \eta, \xi,\Ic)$. Now, the proof is completed by noticing that $\Phi$ is a contraction for $\eta$ small enough: indeed, by Lemma \ref{lemma : stability_solution},
    \begin{align*}
        \lVert \Phi(\boldsymbol{\Theta}') - \Phi(\boldsymbol{\Theta}) \rVert_{\mathfrak{S}} \leq C \eta \lVert \boldsymbol{\Theta}' - \boldsymbol{\Theta} \rVert_{\mathfrak{S}}.
    \end{align*}
    Hence, for $\eta \leq \delta_0 := 1/(2C)$, the map $\Phi$ is a strict contraction on the complete metric space $(\mathfrak{S}, \lVert \cdot \rVert_{\mathfrak{S}})$; its unique fixed point is the unique extended solution to $\Ec(\gamma + \eta, \xi, \Ic)$, so that $(\Sc_{\gamma + \eta})$ holds.
\end{proof}
\begin{Theorem}
    Under Assumption \ref{assumption:solvability_LQ} and for any initial condition $\xi \in L^2(\Omega;\Fc_0;\P;\R^d)$, the FBSDE \eqref{eq : dynamics_linear_cost_alpha_convex} is uniquely solvable.
\end{Theorem}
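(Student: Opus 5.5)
The plan is the standard continuation argument built on Lemma \ref{lemma : stability_solution} and Lemma \ref{lemma : Sgammarecursive}. The FBSDE \eqref{eq : dynamics_linear_cost_alpha_convex} is exactly $\Ec(1,\xi,0)$, the system at parameter $\gamma=1$ with zero input $\Ic=0$. Indeed, the linear structure of $b,\sigma$ in Assumption \ref{assumption:solvability_LQ} gives $\partial_{x_k}H = \partial_{x_k} f + B_k^\top y + E_k^\top z$. It also gives $\tilde{\E}[\partial_{\tilde x_k}\frac{\delta}{\delta \bm} H(t,\tilde{\boldsymbol{\Theta}}_t)(\bX)] = \tilde{\E}[\partial_{\tilde x_k}\frac{\delta}{\delta \bm} f(\cdots)(\bX)] + \bar B_k^\top \E[Y_t] + \bar E_k^\top \E[Z_t]$, and analogously for the $x$ and $\mu$ derivatives. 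It therefore suffices to prove that $(\Sc_1)$ holds.

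\textbf{Step 1: the base case $(\Sc_0)$.} For $\gamma=0$, the system $\Ec(0,\xi,\Ic)$ decouples completely.
\begin{itemize}
\item The forward equation reads $\d X_t = \Ic^b_t \d t + \Ic^\sigma_t \d W_t$ with $X_0=\xi$. It has the explicit unique solution in $\S^2$.
\item The backward part reads $\d Y_t = -\Ic^f_t \d t + Z_t \d W_t$ on each subinterval, with $Y_{t_N} = \Ic^g_{t_N}$ and jumps $\Delta Y_{t_k} = -\Ic^g_{t_k}$. I would solve it by backward recursion over $[t_{k-1},t_k)$ using the martingale representation theorem in the Brownian filtration augmented by $\Fc_0$. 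This is legitimate because $\Ic^g_{t_k}$ is $\Fc_{t_k}$-measurable and square integrable.
\item This gives $Y_t = \E[\Ic^g_{t_N} + \int_t^T \Ic^f_s \d s + \sum_{t<t_k\le t_{N-1}} \Ic^g_{t_k} \mid \Fc_t]$, which is càdlàg. The process $Z$ is unique in $\H^2$.
\item The control $\alpha_t = \hat a(t,\bX,\P_{\bX},X_t,\P_{X_t},Y_t,Z_t)$ is then determined. It lies in $\H^2$ because $\hat a$ is Lipschitz and locally bounded, with linear growth; this follows from the analogue of \cite[Lemma 6.18]{carmona2018probabilistic_I} recalled above.
\end{itemize}
Hence the extended solution exists, is unique, and belongs to $\mathfrak{S}$, so $(\Sc_0)$ holds.

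\textbf{Step 2: induction.} By Lemma \ref{lemma : Sgammarecursive}, the constant $\delta_0$ does not depend on $\gamma$. This independence comes from the constant in Lemma \ref{lemma : stability_solution}, which is independent of $\gamma$. Starting from $(\Sc_0)$, I would apply the lemma successively with steps $\eta=\delta_0$. This yields $(\Sc_{k\delta_0})$ for every $k$ with $k\delta_0<1$. A final step of size $1-k\delta_0\le\delta_0$ then yields $(\Sc_1)$.

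\textbf{Step 3: conclusion.} Specializing $(\Sc_1)$ to $\Ic=0$ gives existence and uniqueness of the extended solution of \eqref{eq : dynamics_linear_cost_alpha_convex}. Since the extended solution contains $(X,Y,Z)$ and $\alpha=\hat\alpha$, this is the unique solvability claimed.

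\textbf{Main obstacle.} The main point to check is that the fixed-point map in Lemma \ref{lemma : Sgammarecursive} is well defined. This requires the perturbed input $\Ic'$ to lie in $\I$ whenever $\boldsymbol{\Theta}\in\mathfrak{S}$, which in turn needs two facts.
\begin{itemize}
\item Square integrability of $\partial_x H$, $\partial_{x_k}H$, and their measure-derivative counterparts along $\boldsymbol{\Theta}$. This follows from the Lipschitz assumptions on $\partial f$ and $\partial g$ in Assumption \ref{assumption:solvability_LQ}(2), giving linear growth, together with the bounded deterministic coefficients $B_k, E_k, \bar B_k, \bar E_k, C, F$.
\item $\Fc_{t_k}$-measurability of $\Ic^{g,'}_{t_k}$. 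This holds by construction, since $\Ic^{g,'}_{t_k}$ is defined as a conditional expectation given $\Fc_{t_k}$.
\end{itemize}
Once these are verified, Steps 1 to 3 go through.
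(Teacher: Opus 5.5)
Your proposal is correct and follows the paper's own continuation argument. You establish $(\Sc_0)$ from the decoupled system $\Ec(0,\xi,\Ic)$, iterate Lemma \ref{lemma : Sgammarecursive} finitely many times with the $\gamma$-independent step $\delta_0$ to reach $(\Sc_1)$, and then specialize to $\Ic\equiv 0$; the identification of \eqref{eq : dynamics_linear_cost_alpha_convex} with $\Ec(1,\xi,0)$, the martingale-representation construction for $(\Sc_0)$, and the check that $\Ic'\in\I$ only fill in details the paper treats as clear.
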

\begin{proof}
    The proof is immediate and follows from Lemma \ref{lemma : Sgammarecursive} since $\delta_0$ is independent of $\gamma$ and after noticing that $(\Sc_0)$ clearly holds and since, by iterating Lemma \ref{lemma : Sgammarecursive} finitely many times, we can choose a partition $0 = \gamma_0 < \gamma_1 < \ldots < \gamma_n = 1$ with $\gamma_{k+1} - \gamma_k \leq \delta_0$ for any $0 \leq k \leq n-1$, which yields that $(\Sc_1)$ holds. Taking $\Ic \equiv 0$, the FBSDE \eqref{eq : dynamics_linear_cost_alpha_convex} is then uniquely solvable.
\end{proof}


\paragraph*{Case of linear quadratic control problems.}

We specify the class of linear dynamics and $L$-convexity for $f$ and $g$ for the subclass of quadratic cost functional. The dynamics of the system is hence given by 
\begin{align*}
\begin{cases}
    \d X_t &=  \Big[ \beta + \sum_{j=1}^{\eta(t)} B_j X_{t_j} + \sum_{j=1}^{\eta(t)} \bar{B}_j \E[ X_{t_j}] + C X_t + \bar{C} \E[X_t] + D \alpha_t  \Big] \d t +  \sigma \d W_t , \\
    X_0 &= \xi,
\end{cases}
\end{align*}
where $\beta \in \R^d$, $ B_1, \bar{B}_1, \ldots, B_N, \bar{B}_N \in \R^{d \times d}$, $C,\bar{C} \in \R^{d \times d}$, $D \in \R^{d \times m}$ , and $\sigma \in \R^{d \times n}$. The associated cost functional takes the form
\begin{align*}
    J(\alpha) = &\E \bigg[ \int_{0}^{T} \Big(\ P_{\eta(t)} \cdot ( \bX_{\eta(t)}^{\top} \bX_{\eta(t)})  + \bar{P}_{\eta(t)} \cdot  (\E[\bX_{\eta(t)}]^{\top} \E[ \bX_{\eta(t)}]) + X_t \cdot Q X_t   +\E[X_t] \cdot  \bar{Q} \E[X_t]   \\
    & \qquad \quad + \tfrac{1}{2}  \alpha_t \cdot R \alpha_t  \Big)\d t  +  G \cdot  (\bX^{\top} \bX)  + \bar{G} \cdot (\E[\bX]^{\top} \E[\bX] )  \bigg],
\end{align*}
where $\bX$ is viewed as the $d \times N$ matrix with $i$-th column $X_{t_i}$ and $\bX_{\eta(t)}$ as its $d \times \eta(t)$ truncation, $P, \bar{P}, G, \bar{G} \in \S^N_{+}$ are symmetric positive semi-definite (with $P_{\eta(t)}, \bar{P}_{\eta(t)}$ their leading $\eta(t) \times \eta(t)$ blocks), $Q, \bar{Q} \in \S^d_{+}$, and $R \in \S^m_{>+}$  is positive definite.
\begin{Remark} Note that, with this convention, the quantity $\E[\bX^{\top} \bX]$ is the classical auto-correlation matrix associated to the time series $(X_{t_1},\ldots,X_{t_N})$, and $\E[ \bX_{\eta(t)}^{\top} \bX_{\eta(t)}]$ to that of the truncated series, for any $0 \leq t \leq T$.
\end{Remark}
This framework falls in our setting, with control set $A = \R^m$, through the coefficients
\begin{align*}
\begin{cases}
    b(t,\bx,\boldsymbol{\mu},x,\mu,a) &= \beta + \sum_{j=1}^{\eta(t)} B_j x_j + \sum_{j=1}^{\eta(t)} \bar{B}_j \bar{\mu}_j + Cx + \bar{C} \bar{\mu} + Da , \\
    \sigma(t,\bx,\boldsymbol{\mu},x,\mu,a) &= \sigma, \\
    f(t,\bx,\boldsymbol{\mu},x,\mu,a) &= P_{\eta(t)} \cdot( \bx_{\eta(t)}^{\top} \bx_{\eta(t)}) +  \bar{P}_{\eta(t)} \cdot (\bar{\boldsymbol{\mu}}_{\eta(t)}^{\top} \bar{\boldsymbol{\mu}}_{\eta(t)})  + x \cdot Qx + \bar{\mu} \cdot \bar{Q} \bar{\mu} + \tfrac{1}{2}  a \cdot  Ra , \\
    g(\bx,\boldsymbol{\mu}) &=  G \cdot ( \bx^{\top} \bx ) +  \bar{G} \cdot ( \bar{\boldsymbol{\mu}}^{\top} \bar{\boldsymbol{\mu}}  ).
\end{cases}
\end{align*}
We then  compute
\begin{align*}
\begin{cases}
        \partial_{x} H(t,\bx, \boldsymbol{\mu}, x ,\mu,y,z,a) &= C^{\top} y + 2Q x , \\
        \partial_{\tilde{x}} \frac{\delta}{\delta m} H(t,\bx,\boldsymbol{\mu},x,\mu,y,z,a)(\tilde{x}) &= \bar{C}^{\top} y + 2\bar{Q} \bar{\mu}, \\
        \partial_{x_k}  H(t,\bx,\boldsymbol{\mu},x,\mu,y,z,a) &= B_k^{\top} y +2\sum_{l=1}^{\eta(t)} P_{lk}  \bx_l , \\
        \partial_{\tilde{x}_k} \frac{\delta}{\delta \bm} H(t,\bx, \boldsymbol{\mu},x,\mu,y,z,a)(\tilde{\bx}_{\eta(t)}) &= \bar{B}_k^{\top} y + 2\sum_{l=1}^{\eta(t)} \bar{P}_{lk}  \bar{\mu}_l, \\
        \partial_{x_k} g(\bx, \boldsymbol{\mu}) &=  2\sum_{l=1}^{N}  G_{lk} \bx_l  \in \R^d , \\
        \partial_{\tilde{x}_k} \frac{\delta}{\delta \bm} g(\bx,\boldsymbol{\mu})( \tilde{\bx}) &=  2\sum_{l=1}^{N}  \bar{G}_{lk} \bar{\boldsymbol{\mu}}_l \in \R^d.
\end{cases}
\end{align*}
Following Theorem \ref{thm : necessary_condition_optimality} and since the Hamiltonian map $H$ is clearly strictly convex in $a$ with $R \in \S^m_{>+}$, it admits a unique minimizer, which suggests looking for an optimal control candidate $\alpha$ defined as follows
\begin{align*}
    \alpha_t = - R^{-1}D^{\top} Y_t \in \R^m, \quad 0 \leq t \leq T.
\end{align*}

Therefore, the FBSDE \eqref{eq : dynamics_linear_cost_alpha_convex} reads as
{\footnotesize\begin{align}\label{eq : FBSDE_LQ}
\begin{cases}
    \d X_t &= \Big[\beta + \Sum_{j=1}^{\eta(t)} B_j X_{t_j} + \Sum_{j=1}^{\eta(t)} \bar{B}_j \E[X_{t_j}]  +C X_t + \bar{C} \E[X_t] - DR^{-1} D^{\top} Y_t \Big] \d t + \sigma \d W_t, \\
    X_0 &= \xi, \\
    \d Y_t &= - \Big[C^{\top} Y_t + 2Q X_t   + \bar{C}^{\top} \E[Y_t] + 2 \bar{Q} \E[X_t] \Big ] \d t + Z_t \d W_t, \\
    Y_{t_k} - Y_{t_k^-} &= -  \E \Big[ \int_{t_k}^{T}  \Big(B_k^{\top} Y_t + \bar{B}_k^{\top} \E[Y_t] + 2 \Sum_{l=1}^{\eta(t)} P_{lk}  X_{t_l}  + 2 \Sum_{l=1}^{\eta(t)} \bar{P}_{lk} \E[X_{t_l}] \Big)  \d t + 2 \Sum_{l=1}^{N} G_{lk} X_{t_l} + 2 \Sum_{l=1}^{N} \bar{G}_{lk} \E[X_{t_l}] \Big| \Fc_{t_k} \Big]   , \\
    Y_{t_N} &=   2\Sum_{l=1}^{N} G_{lN} X_{t_l} + 2\Sum_{l=1}^{N} \bar{G}_{lN} \E[X_{t_l}]. 
\end{cases}
\end{align}}
Due to the structure of the FBSDE above, we make the following ansatz for $Y$ as follows
\begin{align}\label{eq : LQ_ansatz}
    Y_t = \sum_{i=1}^{\eta(t)} \Lambda_i(t) X_{t_i}+ \sum_{i=1}^{\eta(t)} \bar{\Lambda}_{i}(t) \E[ X_{t_i}] + \Gamma(t) X_t + \bar{\Gamma}(t) \E[X_t] + \chi(t), \quad 0 \leq t \leq T,
\end{align}
where $(\Lambda_i, \bar{\Lambda}_i) : [t_i, T] \to \R^{d \times d}$, $(\Gamma, \bar{\Gamma}) : [0,T] \to \R^{d \times d}$ and $\chi : [0,T] \to \R^d$ for any $1 \leq i \leq N-1$, and $\Lambda_N \equiv \bar{\Lambda}_N \equiv 0$. In order to find the system characterizing the coefficients of the adjoint process, we will proceed in two steps. We first characterize the Ricatti equation over each subinterval $[t_{k-1},t_{k})$ for any $1 \leq k \leq N-1 $ and over $[t_{N-1},t_N]$ and we carefully look at the contributions due to the jumps in \eqref{eq : FBSDE_LQ} . Applying Itô's formula on each subinterval $t \in [t_{k-1}, t_k)$ for $k \in \llbracket 1, N-1 \rrbracket$ and over $t \in [t_{N-1},  t_N]$, we have denoting $\Sigma := D R^{-1} D^{\top}$
\begin{align*}
    \d Y_t = \,& \Big\{ \sum_{i=1}^{\eta(t)}\dot\Lambda_{i}(t) X_{t_i} + \sum_{i=1}^{\eta(t)} \dot{\bar\Lambda}_{i}(t) \E[X_{t_i}] + \dot\Gamma(t) X_t + \dot{\bar\Gamma}(t) \E[X_t] + \dot\chi(t) + (\Gamma(t) + \bar\Gamma(t))\beta \\
    &\quad + \Gamma(t) \Big( \sum_{i=1}^{N-1} B_i X_{t_i} + \sum_{i=1}^{N-1} \bar B_i \E[X_{t_i}] + C X_t + \bar C \E[X_t] - \Sigma Y_t\Big) \\
    &\quad +  \bar\Gamma(t) \Big( \sum_{i=1}^{N-1}(B_i + \bar B_i)\E[X_{t_i}] + (C + \bar C)\E[X_t] - \Sigma \E[Y_t] \Big) \Big\} \d t + \Gamma(t) \sigma \d W_t. 
\end{align*}
Hence, matching the terms $(X_{t_i})_{  1 \leq i \leq \eta(t)}, (\E[X_{t_i}])_{1 \leq i \leq \eta(t)}, (X_t) , \E[X_t]$ and the constant term in the continuous part of \eqref{eq : FBSDE_LQ}  yields the following triangular Riccati system 
\begin{align}\label{eq : Riccati_system}
\begin{cases}
    \dot{\Gamma}(t) + C^{\top} \Gamma(t) + \Gamma(t) C - \Gamma(t) \Sigma  \Gamma(t)  + 2Q &= 0, \\
    \dot{\bar{\Gamma}}(t) + \bar{C}^{\top} \bar{\Gamma}(t)+ \Gamma(t) \bar{C} - \Gamma(t) \Sigma \bar{\Gamma}(t) + \bar{\Gamma}(t) (C+ \bar{C}) - \bar{\Gamma}(t) \Sigma (\Gamma(t) + \bar{\Gamma}(t)) + \bar{C}^{\top}(\Gamma(t) + \bar{\Gamma}(t)) + 2 \bar{Q} &=0 ,\\
\dot{\Lambda}_{i}(t) + \Gamma(t) B_i + C^{\top} \Lambda_{i}(t) - \Gamma_t \Sigma \Lambda_{i}(t) &=0 ,\\
\dot{\bar{\Lambda}}_{i}(t) + \Gamma(t) \bar{B}_i - \Gamma(t) \Sigma \bar{\Lambda}_{i}(t) + \bar{\Gamma}(t) (B_i + \bar{B}_i) - \bar{\Gamma}(t) \Sigma (\Lambda_{i}(t) + \bar{\Lambda}_{i}(t)) + C^{\top} \bar{\Lambda}_{i}(t) + \bar{C}^{\top} (\Lambda_{i}(t) + \bar{\Lambda}_{i}(t))  &=0,\\
\dot{\chi}(t) + (\Gamma(t) + \bar{\Gamma}(t)) \beta - (\Gamma(t) + \bar{\Gamma}(t))  \Sigma \chi(t) + (C+ \bar{C})^{\top}   \chi(t) &=0,
\end{cases}
\end{align}
for $1 \leq i <k$ when $t \in [t_{k-1},t_k)$ whenever $k \in \llbracket 1, N-1 \rrbracket$ and for $1 \leq i < N$ when $t \in [t_{N-1},t_N]$,
with the following terminal conditions
\begin{align}\label{eq : terminal_condition}
\begin{cases}
    \Gamma(T) &= 2 G_{NN} I_d, \\
    \bar{\Gamma}(T) &=2 \bar{G}_{NN} I_d, \\
    \Lambda_{i}(T) &= 2 G_{iN} I_d, \quad 1 \leq i \leq N-1 \\
    \bar{\Lambda}_{i}(T) &= 2 \bar{G}_{iN} I_d, \quad 1 \leq i \leq N-1, \\
    \chi(T) &=0.
\end{cases}
\end{align}
In order to fully characterize the system, it is now enough to characterize the terminal conditions $t \uparrow t_k$ for any $1 \leq k \leq N-1$  for the coefficients of the adjoint process in \eqref{eq : FBSDE_LQ}. To do so, we shall rely on the following stability Lemma.
\begin{Lemma}\label{lemma : stability}
    For any $k \in \llbracket 1, N-1 \rrbracket$ and for any $t \in [t_k,T]$, it holds 
    \begin{align}\label{eq : conditional_expectation}
        \E[X_t|\Fc_{t_k}] = \sum_{j=1}^{k} \Bc_{j,k}(t) X_{t_j} + \sum_{j=1}^{k} \bar{\Bc}_{j,k}(t) \E[X_{t_j}] + \rho_k(t), \quad \P-\text{a.s},
    \end{align}
for some analytic  deterministic maps  $(\Bc_{j,k}, \bar{\Bc}_{j,k})_{1 \leq j \leq k,}$ and $\rho_k$ defined from $[t_k,T]$ into respectively $\R^{d \times d}, \R^{d \times d}$ and $\R^d$.
\end{Lemma}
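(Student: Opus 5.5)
Within the linear-quadratic setting, where the state equation \eqref{eq : FBSDE_LQ} is closed through the affine ansatz \eqref{eq : LQ_ansatz} for $Y$, the plan is to close the equation satisfied by the conditional mean $m_k(t) := \E[X_t|\Fc_{t_k}]$ for $t \in [t_k,T]$ and solve it as a linear ODE.

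\textbf{Step 1: reduce to a closed linear system.} Substitute $Y_t = \sum_{i\le \eta(t)}\Lambda_i(t)X_{t_i} + \sum_{i\le\eta(t)}\bar\Lambda_i(t)\E[X_{t_i}] + \Gamma(t)X_t + \bar\Gamma(t)\E[X_t] + \chi(t)$ into the forward drift. The forward state then satisfies, on each subinterval $[t_{l},t_{l+1})$, a linear SDE with additive noise $\sigma\,\d W_t$. Its drift is affine in $X_t$, $\E[X_t]$, $(X_{t_i})_{i\le l}$ and $(\E[X_{t_i}])_{i\le l}$, with deterministic, piecewise-continuous (indeed piecewise-analytic) coefficients.

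\textbf{Step 2: take conditional expectations.} Fix $k$ and $t \ge t_k$. Conditioning on $\Fc_{t_k}$ kills the stochastic integral, which is a true martingale since $X \in \S^2$. The means $\E[X_t]$ and $\E[X_{t_i}]$ are deterministic. Hence $m_k$ solves on $[t_k,T]$ the equation
\begin{align*}
\dot m_k(t) = \big(C - \Sigma\Gamma(t)\big) m_k(t) + \sum_{i=1}^{\eta(t)} \big(B_i - \Sigma\Lambda_i(t)\big)\, m_k(t_i) + \big(\text{deterministic terms in } \E[X_t], \E[X_{t_i}]\big),
\end{align*}
with $m_k(t_i) = X_{t_i}$ for $i\le k$ by measurability. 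The terms $m_k(t_i)$ with $k < i \le \eta(t)$ are new unknowns. They are handled by marching forward over the grid. On $[t_k,t_{k+1})$ the equation is a linear ODE with initial value $X_{t_k}$ and forcing that is linear in $X_{t_1},\dots,X_{t_k}$ and in deterministic means. On the next interval, $m_k(t_{k+1})$ is already known in this affine form and simply joins the forcing.

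\textbf{Step 3: conclude by induction.} By induction on the subintervals, using the variation-of-constants formula with the resolvent of $C-\Sigma\Gamma(\cdot)$, one obtains $m_k(t) = \sum_{j\le k}\Bc_{j,k}(t)X_{t_j} + \sum_{j\le k}\bar\Bc_{j,k}(t)\E[X_{t_j}] + \rho_k(t)$. The means $\E[X_t]$ themselves solve a closed linear ODE, obtained by taking full expectations. They are therefore expressible through $\E[X_{t_j}]$, $j\le k$, plus a deterministic term, which are absorbed into $\bar\Bc_{j,k}$ and $\rho_k$. The coefficients solve linear matrix ODEs whose coefficients come from the Riccati system \eqref{eq : Riccati_system}.

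\textbf{Main obstacle.} I expect the main difficulty to be the regularity claim, namely analyticity. The Riccati solutions $\Gamma,\bar\Gamma,\Lambda_i,\bar\Lambda_i,\chi$ are analytic on each open subinterval, because they solve polynomial ODEs with constant coefficients. Consequently the linear ODEs for $\Bc_{j,k}$, $\bar\Bc_{j,k}$, $\rho_k$ have analytic coefficients there, and so do their solutions. This gives analyticity on each $[t_l,t_{l+1})$, with matching at the grid points, i.e.\ the maps are piecewise analytic. This requires assuming that the Riccati system is well posed on the whole interval, without blow-up, which I would take from the solvability theorem above.
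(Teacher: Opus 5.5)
Your proposal is correct and follows essentially the paper's proof. You substitute the affine ansatz into the forward drift, condition to remove the additive-noise martingale, solve the resulting linear ODE with the resolvent of $C-\Sigma\Gamma(\cdot)$ on each subinterval, and propagate across the grid points. You condition directly on $\Fc_{t_k}$ and march forward, while the paper conditions on $\Fc_{t_i}$ interval by interval and concatenates through the tower property; this gives the same recursion for $\Bc_{j,k}$, $\bar{\Bc}_{j,k}$, $\rho_k$.

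Your caveat that the maps are only piecewise analytic is accurate: the closed-loop coefficients jump at the grid points, and the paper does not argue analyticity beyond the ODE representation. Like the paper, you presuppose that the ansatz coefficients exist on $[t_k,T]$. Note that this does not follow directly from the continuation-method solvability theorem; an additional decoupling-field argument would be needed.
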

\begin{proof} 
Fix $k$ $\in$  $\llbracket 1, N-1 \rrbracket$. 
From the ansatz form of $Y$ in \eqref{eq : LQ_ansatz}, we see that the dynamics of $X$ can be rewritten over each interval $[t_i, t_{i+1})$, $k\leq i\leq N-1$, as 
\begin{align}\label{eq : SDE_sub_interval}
    \d X_t = [A(t) X_t + \bar{A}(t) \E[X_t] + \sum_{j=1}^{i} F_{j}(t) X_{t_j} + \sum_{j=1}^{i} \bar{F}_j(t) \E[X_{t_j}] + \kappa(t) ] \d t + \sigma \d W_t, \quad t_i \leq t < t_{i+1},
\end{align}
for some continuous deterministic maps $(A, \bar{A},(F_{j})_{1 \leq j \leq i}, (\bar{F}_j)_{1 \leq j \leq i}, \kappa)$ and where we notice that the SDE can be extended to $t_{i+1}$ due to the drift term beeing absolutely continuous with respect to the Lebesgue measure.  Taking the conditional expectation with respect to $\Fc_{t_i}$ in \eqref{eq : SDE_sub_interval} yields that
\begin{align}
    \d \E[X_t|\Fc_{t_i}]= \big(A(t) \E[X_t|\Fc_{t_i}] + \bar{A}(t)\E[X_t] +\sum_{j=1}^{i} F_{j}(t) X_{t_j} + \sum_{j=1}^{i} \bar{F}_j(t) \E[X_{t_j}] +  \kappa(t)  \big) \d t , \quad \P-\text{a.s},
\end{align}
with $\E[X_{t_{\red{i}}}|\Fc_{t_i}] = X_{t_i}$. Then, from standard ODE analysis (see, eg \cite[Theorem 4.1]{sideris2013ordinary}) applied to the map $t \mapsto \E[X_t|\Fc_{t_i}](\omega)$ for $\P(\d \omega)-\text{a.s}$, it follows that the unique solution of the map  is given by
\begin{footnotesize}
\begin{align}\label{eq : conditional_expectation_form}
    \E[X_t|\Fc_{t_i}] = \Phi(t,t_i) X_{t_i} + \int_{t_i}^{t} \Phi(t,s)\big(\sum_{j=1}^i F_{j}(s) X_{t_j} + \bar{A}(s) \E[X_s] + \sum_{j=1}^{i} \bar{F}_j(s) \E[X_{t_j}] + \kappa(s) \big) \d s, \quad \P-\text{a.s}, \quad t_i \leq t \leq t_{i+1}.
\end{align}
\end{footnotesize}
where $\Phi$ is solution to
\begin{align}
\begin{cases}
    \partial_t \Phi(t,s) &= A(t) \Phi(t,s), \\
    \Phi(s,s) &= I_d,
\end{cases}
\end{align}
for any $t_i \leq s \leq t \leq t_{i+1}$. 
Finally, by concatenating \eqref{eq : conditional_expectation_form}  on each interval $[t_i,t_{i+1}]$ for $k \leq i \leq N-1$, 
and  using the tower property, we obtain the relation \eqref{eq : conditional_expectation} with coefficients $(\Bc_{j,k}, \bar{\Bc}_{j,k})_{1 \leq j \leq k}$, $\rho_k$ explicitly given as
\begin{align}
\begin{cases}
\Bc_{j,k}(t)
&=
\Phi(t,t_i) \Bc_{j,k}(t_i)
+
\displaystyle\int_{t_i}^{t}
\Phi(t,s)
\left(
F_j(s)
+
\displaystyle\sum_{\ell=k+1}^{i}
F_\ell(s) \Bc_{j,k}(t_\ell)
\right)
\d s,
\qquad t_i \leq t \leq t_{i+1},
\\[2mm]
\Bc_{j,k}(t_k)
&=
\mathds{1}_{j=k} I_d, 
\qquad j \in \llbracket 1,k \rrbracket,
\end{cases}
\end{align}
and
\begin{align}
\begin{cases}
\bar{\Bc}_{j,k}(t)
&=
\Phi(t,t_i) \bar{\Bc}_{j,k}(t_i)
+
\displaystyle\int_{t_i}^{t}
\Phi(t,s)
\Big(
\bar{F}_j(s)
+
\bar{A}(s) \big( \Bc_{j,k}(t) + \bar{\Bc}_{j,k}(t) \big)
\\
&\hspace{3.2cm}
+
\displaystyle\sum_{\ell=k+1}^{i}
\left(
F_\ell(s) \bar{\Bc}_{j,k}(t_\ell)
+
\bar{F}_\ell(s) \big( \Bc_{j,k} + \bar{\Bc}_{j,k} \big)(t_\ell)
\right)
\Big)
\d s,
\qquad t_i \leq t \leq t_{i+1},
\\[2mm]
\bar{\Bc}_{j,k}(t_k)
&=
0,
\qquad j \in \llbracket 1,k \rrbracket,
\end{cases}
\end{align}
and
\begin{align}
\begin{cases}
\rho_k(t)
&=
\Phi(t,t_i) \rho_k(t_i)
+
\displaystyle\int_{t_i}^{t}
\Phi(t,s)
\Big(
\kappa(s)
+
\bar{A}(s) \rho_k(s)
\\
&\hspace{3.2cm}
+
\displaystyle\sum_{\ell=k+1}^{i}
\left(
(F_\ell(s) + \bar{F}_l(s) ) \rho_k(t_\ell)
\right)
\Big)
\d s,
\qquad t_i \leq t \leq t_{i+1},
\\[2mm]
\rho_k(t_k)
&=
0,
\end{cases}
\end{align}
\end{proof}
Hence, Lemma \ref{lemma : stability} makes the ansatz  consistent with \eqref{eq : FBSDE_LQ} and we can therefore identify the jump terms associated to $\Lambda_i, \bar{\Lambda}_i$,$\Gamma$, $\bar{\Gamma}$ and $\chi$. Combining this, we are able to compute $\Lambda_i(t_k^{-}), \bar{\Lambda}_i(t_k^{-})$, $\Gamma(t_k^{-})$, $\bar{\Gamma}(t_k^{-})$ and $\chi(t_k^{-})$ for any $k \in \llbracket 1, N-1 \rrbracket$.  We first write that
\begin{align}
    \E[Y_t | \Fc_{t_k}]= \sum_{j=1}^{k} \Cc_{j,k}(t) X_{t_j} +  \sum_{j=1}^{k} \bar{\Cc}_{j,k}(t) \E[X_{t_j}] + c_k(t), \quad t \in [t_k,T],
\end{align}
where we set
\begin{align}
\begin{cases}
    \Cc_{j,k}(t) &:=  \Lambda_j(t) + \Sum_{l=k+1}^{\eta(t) } \Lambda_l(t) \Bc_{j,k}(t_l) + \Gamma(t) \Bc_{j,k}(t), \\
    \bar{\Cc}_{j,k}(t) &:= \bar{\Lambda}_j(t) + \Sum_{l=k+1}^{\eta(t)} \Lambda_l(t) \bar{\Bc}_{j,k}(t_l) + \bar{\Lambda}_l(t) (\Bc_{j,k} + \bar{\Bc}_{j,k})(t_l) + \Gamma(t) \bar{\Bc}_{j,k}(t) + \bar{\Gamma}(t) (\Bc_{j,k} + \bar{\Bc}_{j,k})(t), \\
    c_k(t) &:=  \chi(t) + \Sum_{l=k+1}^{\eta(t)} \big(\Lambda_l(t) + \bar{\Lambda}_l(t) \big)\rho_k(t_l)   + \big(\Gamma(t) + \bar{\Gamma}(t) \big)\rho_k(t) .
\end{cases}
\end{align}
Indeed, it follows directly that the jump term can be rewritten as 
\begin{align}
    Y_{t_k^{-}} = Y_{t_k} + \sum_{j=1}^{k} J_{j,k} X_{t_j} + \sum_{j=1}^{k} \bar{J}_{j,k} \E[X_{t_j}] + J_k^0,
\end{align}
where we have that
\begin{align}\label{eq : def_jump_terms}
\begin{cases}
    J_{j,k} &= \displaystyle\int_{t_k}^{T} \big(B_k^{\top} \Cc_{j,k}(t) + 2 P_{jk} + 2 \sum_{l=k+1}^{\eta(t)} P_{l,k} \Bc_{j,k}(t_l) \big) \d t  + 2 G_{j,k} + 2 \sum_{l=k+1}^{N} G_{l,k} \Bc_{j,k}(t_l) , \\
    \bar{J}_{j,k} &= \int_{t_k}^{T} \Big(\big(B_k^{\top} \bar{\Cc}_{j,k}(t) + \bar{B}_k^{\top} (\Cc_{j,k}(t) + \bar{\Cc}_{j,k}(t) \big) + 2 \displaystyle\sum_{l=k+1}^{\eta(t)} P_{l,k} \bar{\Bc}_{j,k}(t_l) + 2 \bar{P}_{jk} + 2 \sum_{l=k+1}^{\eta(t)} \bar{P}_{l,k} (\Bc_{j,k}(t_l) + \bar{\Bc}_{j,k}(t_l)) \Big) \d t , \\
    &\qquad \displaystyle+ 2 \sum_{l=k+1}^{N} G_{lk} \bar{\Bc}_{j,k}(t_l) + 2 \bar{G}_{jk} + 2 \sum_{l=k+1}^{N} \bar{G}_{lk} (\Bc_{j,k}(t_l) + \bar{\Bc}_{j,k}(t_l)), \\
    J_k^0 &= \displaystyle\int_{t_k}^{T} \Big( (B_k + \bar{B}_k)^{\top} c_k(t) + 2 \sum_{l=k+1}^{\eta(t)} \big(P_{lk} + \bar{P}_{lk} \big) \rho_k(t_l) \big) \Big)\d t + 2 \sum_{l=k+1}^{N} \big(G_{lk}  + \bar{G}_{lk} \big)\rho_k(t_l) .
\end{cases}
\end{align}
Therefore, the terms are characterized for any $k \in \llbracket 1, N-1 \rrbracket$ by
\begin{align}\label{eq : jump_terms}
\begin{cases}
\Lambda_j(t_k^{-} ) &= \Lambda_j(t_k)  + J_{j,k}, \quad 1 \leq j \leq k-1, \\
\bar{\Lambda}_j(t_k^{-}) &= \bar{\Lambda}_j(t_k) + \bar{J}_{j,k}, \quad 1 \leq j \leq k-1, \\ 
\Gamma(t_k^{-}) &= \Gamma(t_k) + \Lambda_k(t_k) +J_{k,k}, \\
    \bar{\Gamma}(t_k^{-}) &= \bar{\Gamma}(t_k) + \bar{\Lambda}_k(t_k) + \bar{J}_{k,k},  \\
     \chi(t_k^{-}) &= \chi(t_k) + J_k^0,
\end{cases}
\end{align}
where the terms have been defined in \eqref{eq : def_jump_terms}. 
Hence, the Riccati system is fully characterized through \eqref{eq : Riccati_system}, \eqref{eq : terminal_condition} and \eqref{eq : jump_terms}. Indeed, starting from \eqref{eq : terminal_condition}, we compute for any $1 \leq j \leq N-1$  the Riccati system \eqref{eq : Riccati_system} and hence the quantities $\Lambda_{i}(t_{j}), \bar{\Lambda}_i(t_{j}), \Gamma(t_{j}), \bar{\Gamma}(t_{j}), \chi(t_{j})$ for any $1 \leq i \leq j$. Then, following \eqref{eq : jump_terms}, we compute the quantities at $t_{j}^{-}$ for any $1 \leq i \leq j-1$ and proceed similarly to compute the quantities at date $t_{j-1}$ following again \eqref{eq : Riccati_system}.
Hence, the solution of the forward-backward equation $(X,Y,Z)$ is given by
\begin{align}
\begin{cases}
    Y_t &= \Sum_{i=1}^{\eta(t)} \Lambda_i(t) X_{t_i} + \Sum_{i=1}^{\eta(t)} \bar{\Lambda}_i(t) \E[X_{t_i}] + \Gamma(t)X_t + \bar{\Gamma} (t)  \E[X_t]+ \chi(t),   \\
    Z_t &= \Gamma(t) \sigma,
\end{cases}
\end{align}
for any $0 \leq t \leq T$ and  where $X=(X_t)_{0 \leq t \leq T}$ is the unique solution (see Proposition \ref{thm : existence_unicity_X_SDE}) to the linear SDE
\begin{align}
\begin{cases}
    \d X_t &= \beta - \Sigma \chi(t) + \Sum_{j=1}^{\eta(t)} \big(B_j - \Sigma \Lambda_j(t) \big) X_{t_j} + \Sum_{j=1}^{\eta(t)} \big( \bar{B}_j - \Sigma \bar{\Lambda}_j(t) \big) \E[X_{t_j}]  + \big(C- \Sigma \Gamma(t) \big) X_t + \big( \bar{C}- \Sigma \bar{\Gamma}(t) \big) \E[X_t], \\
    X_0 &= \xi,
\end{cases}
\end{align}
where $(\Lambda_i)_{1 \leq i \leq N}$, $(\bar\Lambda_i)_{1 \leq i \leq N}$,  $\Gamma$, $\bar\Gamma$ and $\chi$ are solutions to the  system \eqref{eq : Riccati_system}, \eqref{eq : terminal_condition} and \eqref{eq : jump_terms}.

\subsection{A framework for time series generation}\label{subsec : time_series}

We now discuss a class of models which can be used for the purpose of time series generation through diffusion models. The controlled state process $X=(X_t)_{t \in [0,T]}$ is given by
\begin{align}\label{eq : state_process_X}
\begin{cases}
    \d X_t &= \alpha_t \d t + \sigma(t,X_t) \d W_t, \\
    X_0 &= 0,
\end{cases}
\end{align}
for some given map $\sigma : [0,T] \times \R^d \to \R^{d \times n}$ satisfying Assumptions \ref{assumption : regularity_b_and_sigma} and \ref{assumption : regularity_pontryagin_optimality} and, given some target time series distribution $\rho \in \Pc_2\big((\R^d)^N\big)$, we aim to minimize over $\alpha \in \Ac$ the following cost functional parametrized by $\lambda > 0$
\begin{align}\label{eq : cost_function_time_series}
    \Ac \ni \alpha \mapsto J^{\lambda}(\alpha) := \E \Big[\frac{1}{2} \int_{0}^{T} |\alpha_t|^2 \d t \Big] + \lambda  D_{K}\big(\P_{(X_{t_1},\ldots,X_{t_N})} \, \big| \, \rho \big),
\end{align}
where the Maximum Mean Discrepancy (MMD) map $D_K(\cdot |\rho)$ is defined as $\Pc_2((\R^d)^N) \ni \mu \mapsto D_K(\mu | \rho)  = \frac{1}{2} \int_{((\R^d)^N)^2} K(\bx,\by) (\mu - \rho)(\d \bx) (\mu -\rho)(\d \by)$, for some measurable kernel map $K : (\R^d)^N \times (\R^d)^N \to \R$ for which $D_K$ is characteristic, i.e. $D_K(\mu|\rho) =0$ if and only if $\mu=\rho$.
This setting falls in our framework, with control set $A = \R^d$ and the following maps
\begin{align*}
\begin{cases}
    b(t,\bx,\boldsymbol{\mu}, x,\mu,a) &:= a, \\
    \sigma(t,\bx,\boldsymbol{\mu},x,\mu,a) &:= \sigma(t,x), \\
    f(t,\bx,\boldsymbol{\mu},x,\mu,a) &:= \frac{1}{2} |a|^2, \\
    g(\bx,\boldsymbol{\mu}) &:= \lambda D_K(\boldsymbol{\mu}|\rho).
\end{cases}
\end{align*}
\begin{Remark}
The motivation behind the control problem \eqref{eq : state_process_X}--\eqref{eq : cost_function_time_series} is closely related to generative modeling. As discussed in the introduction, the objective is to generate samples from a prescribed target distribution $\rho$. In the context of time series, this target distribution is identified with the joint law of the random vector $(X_{t_1},\ldots,X_{t_N})$.
The control problem associated with the cost functional $(J^\lambda)$ is designed so that, for an optimal control $\alpha^\star$ and the corresponding optimal state process $(X^{\lambda,\alpha^\star}_t)_{0 \leq t \leq T}$, one formally expects
$
D_K\Big(\mathbb P_{(X_{t_1}^{\lambda,\alpha^\star},\ldots,X_{t_N}^{\lambda,\alpha^\star})} \, \big\vert \, \rho\Big)
\to 0,
$
when $\lambda$ goes to $ \infty$.
In other words, the joint distribution of the optimally controlled trajectory at the prescribed observation dates should converge, in the sense induced by the divergence $D_K$, to the target law $\rho$.
Consequently, solving the optimal control problem \eqref{eq : state_process_X}--\eqref{eq : cost_function_time_series} and simulating independent copies
$\bigl(X_{t_1}^i,\ldots,X_{t_N}^i\bigr)_{i \in \llbracket 1, m \rrbracket }$ of the optimally controlled trajectory provides a collection of samples whose empirical joint distribution is expected to be close to $\rho$.
From a numerical perspective, other discrepancy functionals could also be considered, including divergence- or transport-based measures such as the Kullback--Leibler divergence or Wasserstein distances. From a theoretical standpoint, however, these alternatives are generally more delicate to handle, as they may fail to satisfy the convenient Lipschitz regularity properties enjoyed by kernel-based discrepancies. Extending the analysis to such settings would therefore require additional technical arguments.
\end{Remark}

We now make the following assumptions on the choice of the kernel $K$ which are relevant in the proof of existence and uniqueness of the associated FBSDE system.
\begin{Assumption}\label{assumption: existence_unicity_FBSDE}
The kernel $K : (\R^d)^N \times (\R^d)^N \to \R$ is symmetric, of class $\Cc^2$, and the gradient $\partial_{\bx} K$ is bounded and Lipschitz on $(\R^d)^N \times (\R^d)^N$. Equivalently, $\partial_{\bx} K$, $\partial^2_{\bx} K$ and the mixed Hessian $\partial_{\by}\partial_{\bx} K$ are all uniformly bounded:
\begin{align*}
    \underset{\bx, \by \in (\R^d)^N}{\sup}\; \bigl( \lVert \partial_{\bx} K(\bx,\by) \rVert + \lVert \partial^2_{\bx} K(\bx,\by) \rVert + \lVert \partial_{\by}\partial_{\bx} K(\bx,\by) \rVert \bigr) < \infty.
\end{align*}
\end{Assumption}
We shall notice that the Gaussian kernel defined as $K(\bx,\by)=e^{- \frac{\lVert \bx - \by \rVert^2}{2 \epsilon^2}}$ for any $\bx,\boldsymbol{y} \in (\R^d)^N$ and for some $\epsilon > 0$ is characteristic and satisfies the requirements of Assumption \ref{assumption: existence_unicity_FBSDE} (see \cite{sriperumbudur2010hilbert})

\begin{Remark}\label{rmk : lipschitz_kernel} 
\noindent Under Assumption \ref{assumption: existence_unicity_FBSDE}, for any $1 \leq k \leq N$, the map
    \begin{align*}
        \Pc_2((\R^d)^N) \times (\R^d)^N \ni (\boldsymbol{\mu},\bx) \mapsto \partial_{x_k} \frac{\delta}{\delta \bm} D_K(\boldsymbol{\mu} |\rho)(\bx),
    \end{align*}
    satisfies the following conditions
    \begin{enumerate}
        \item [(1)] $(\boldsymbol{\mu},\bx) \mapsto \partial_{x_k} \frac{\delta}{\delta \bm} D_K(\boldsymbol{\mu}|\rho)(\bx)$ is uniformly bounded.
        \item [(2)] There exists a positive constant $C_K > 0$ such that for any $\boldsymbol{\mu},\boldsymbol{\mu}' \in \Pc_2((\R^d)^N)$ and $\bx,\bx' \in (\R^d)^N$,
        \begin{align*}
            | \partial_{x_k} \frac{\delta}{\delta \bm} D_K(\boldsymbol{\mu}'|\rho)(\bx') - \partial_{x_k} \frac{\delta}{\delta \bm} D_K(\boldsymbol{\mu}|\rho)(\bx) | \leq C_K \big( \Wc_2(\boldsymbol{\mu},\boldsymbol{\mu}') + |\bx-\bx'| \big).
        \end{align*}
    \end{enumerate}
\end{Remark}
It is easy to verify that in this setting $\partial_{x_k} \frac{\delta}{\delta \bm} D_{K}(\boldsymbol{\mu} | \rho)(\bx ) = \int_{(\R^d)^N} \partial_{x_k} K(\bx,\by) \boldsymbol{\mu}(\d \by) - \int_{(\R^d)^N} \partial_{x_k} K(\bx, \by) \rho(\d \by)$ from which Remark \ref{rmk : lipschitz_kernel} follows. Therefore the associated FBSDE system with jumps is given by
\begin{align}\label{eq : FBSDE_jumps_kernel}
\begin{cases}
    \d X_t &= - Y_t \d t + \sigma(t,X_t) \d W_t, \\
    X_0 &= 0, \\
    \d Y_t &= - \partial_x \big(\sigma(t,X_t) \cdot Z_t \big)\d t + Z_t \d W_t, \quad t_{k-1} \leq t < t_{k}, \quad k \in \llbracket 1, N-1 \rrbracket, \quad \text{ and $t_{N-1} \leq t \leq t_N$,} \\
    Y_{t_k} - Y_{t_k^{-}} &= - \lambda  \E \Big[  \int_{(\R^d)^N} \partial_{x_k} K(\bX, \by)\P_{(X_{t_1},\ldots,X_{t_N})}(\d \by) - \int_{(\R^d)^N} \partial_{x_k} K(\bX, \by)\rho(\d \by) \Big|\Fc_{t_k} \Big], \quad k \in \llbracket 1, N-1 \rrbracket, \\
    Y_{t_N} &= \lambda \Big( \int_{(\R^d)^N} \partial_{x_N} K(\bX,\by) \P_{(X_{t_1},\ldots,X_{t_N})}(\d \by) - \int_{(\R^d)^N} \partial_{x_N} K(\bX,\by) \rho(\d \by) \Big).
\end{cases}
\end{align}

We make the additional assumption on the diffusion coefficient $\sigma$.

\begin{Assumption} \label{eq:assumsig} 
    Suppose that there exists a measurable map $\Sigma :=(\Sigma_1,\ldots, \Sigma_n)  : [0,T] \times \R^d \to \R^n$ such that
    \begin{align}
        \partial_x \sigma^j(t,x) = \Sigma_j(t,x) I_d, \quad j \in \llbracket 1, n \rrbracket,
    \end{align}
    where $\sigma^j$ is the $j$-th column of $\sigma$. 
\end{Assumption}

\begin{Remark}
Assumption \ref{eq:assumsig} is clearly satisfied in dimension $d$ $=$ $1$, and for $d$ $\geq$ $2$, it holds whenever $\sigma$ is affine in $x$.     
\end{Remark}

\begin{Theorem}\label{thm : local_existence_FBSDE_kernel}
    Under Assumptions \ref{assumption: existence_unicity_FBSDE} and \ref{eq:assumsig}, there exists $\delta>0$, depending only on the bound and Lipschitz constant of $\partial_x\sigma$, the Lipschitz and boundedness constants of $\partial_{x_k}\frac{\delta}{\delta\bm}D_K$ provided by Remark \ref{rmk : lipschitz_kernel} and Assumption \ref{assumption : regularity_pontryagin_optimality}(i), the penalization constant $\lambda$, and the number of observation times $N$, such that, for any $T\in(0,\delta]$, the FBSDE \eqref{eq : FBSDE_jumps_kernel} admits a unique solution
$(X,Y,Z)\in
\S^4([0,T];\R^d)
\times
\S^\infty([0,T];\R^d)
\times
\H^2([0,T];\R^{d\times n}),$
with $Z$ belonging to $\mathrm{BMO}$.
\end{Theorem}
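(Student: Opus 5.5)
We will obtain the solution as the fixed point of a Picard-type map $\Psi$ acting on a closed ball of $\S^\infty([0,T];\R^d)$. The argument has four steps: freeze the forward input, solve the backward equation with jumps by backward recursion over the grid $\Pi$, check that the ball is invariant, and show that $\Psi$ is a contraction for small $T$.

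\textbf{Step 1: the map $\Psi$.} Fix $R>0$ and let $\mathcal{B}_R$ be the set of processes $U\in\S^\infty([0,T];\R^d)$ with $\lVert \sup_t|U_t|\rVert_{L^\infty}\le R$. Given $U\in\mathcal{B}_R$, let $X=X^U$ solve the forward equation $\d X_t=-U_t\,\d t+\sigma(t,X_t)\,\d W_t$, $X_0=0$. Since $\sigma$ is Lipschitz with linear growth and $U$ is bounded, $X\in\S^4([0,T];\R^d)$ by standard estimates (Proposition \ref{thm : existence_unicity_X_SDE} with $\alpha=-U$, extended to fourth moments via BDG). The law $\P_{\bX}$ is then a fixed element of $\Pc_2((\R^d)^N)$.

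Under Assumption \ref{eq:assumsig}, the driver becomes $\partial_x(\sigma(t,X_t)\cdot Z_t)=\sum_{j}\Sigma_j(t,X_t)Z^j_t$, where $Z^j$ is the $j$-th column of $Z$. This is linear in $Z$ with a bounded coefficient $\Sigma(t,X_t)$, bounded by $\lVert\partial_x\sigma\rVert_\infty$, and it does not depend on $Y$. We solve the backward equation by recursion over the grid, as in Remark \ref{rem : N_adjoints}. By Remark \ref{rmk : lipschitz_kernel}(1), the terminal value $Y_T=\lambda\,\partial_{x_N}\frac{\delta}{\delta\bm}D_K(\P_{\bX}|\rho)(\bX)$ is bounded by $\lambda C_K$. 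On $[t_{N-1},T]$ the BSDE is linear, so a Girsanov change of measure with density $\mathcal{E}(\int\Sigma(s,X_s)\,\d W_s)$ gives $Y_t=\E^{\Q}[Y_T|\Fc_t]$; this is bounded by the same constant, and $Z\in\mathrm{BMO}$ by the standard energy estimate for bounded-solution BSDEs. At each $t_k$, the jump $\Delta Y_{t_k}$ is a conditional expectation of a quantity bounded by $\lambda C_K$, so $|Y_{t_k^-}|\le|Y_{t_k}|+\lambda C_K$. Iterating over the $N$ subintervals yields $\lVert Y\rVert_{\S^\infty}\le N\lambda C_K=:R_0$ and $Z\in\mathrm{BMO}$, with $\lVert Z\rVert_{\mathrm{BMO}}$ bounded uniformly in $U$. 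Set $\Psi(U):=Y$.

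\textbf{Step 2: invariance.} Choosing $R=R_0$, Step 1 shows $\Psi(\mathcal{B}_R)\subset\mathcal{B}_R$ for every $T$, since the bound $R_0$ does not depend on $U$ or on $T$.

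\textbf{Step 3: contraction.} Take $U,U'\in\mathcal{B}_R$ and write $\delta X=X^U-X^{U'}$. By Lipschitz estimates for the forward SDE, $\E[\sup_t|\delta X_t|^2]\le C T^2\lVert U-U'\rVert^2_{\S^\infty}$, hence also $\Wc_2(\P_{\bX},\P_{\bX'})\le CT\lVert U-U'\rVert_{\S^\infty}$. For the backward part, write the difference of the two linear BSDEs under the measure $\Q$ associated with the first driver. The remaining source terms are then:
\begin{itemize}
\item the terminal and jump differences, which by Remark \ref{rmk : lipschitz_kernel}(2) are bounded by $\lambda C_K(|\delta\bX|+\Wc_2)$;
\item the driver difference $(\Sigma(t,X_t)-\Sigma(t,X'_t))Z'_t$, controlled through the Lipschitz property of $\partial_x\sigma$ by $C|\delta X_t||Z'_t|$.
\end{itemize}
Taking conditional expectations and using that $Z'$ is BMO uniformly, we aim for an estimate of the form $\lVert\Psi(U)-\Psi(U')\rVert_{\S^\infty}\le C(N,\lambda,C_K,\sigma)\,\phi(T)\,\lVert U-U'\rVert_{\S^\infty}$ with $\phi(T)\to0$ as $T\to0$. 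For $T\le\delta$ this makes $\Psi$ a contraction, and its fixed point $U=Y$ solves \eqref{eq : FBSDE_jumps_kernel}. Uniqueness in the stated class follows from the same estimate: every solution with $Y\in\S^\infty$ lies in $\mathcal{B}_{R_0}$ by Step 1.

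\textbf{Main obstacle.} The hardest step is the contraction estimate in the $L^\infty$ norm. The conditional quantities $\E[|\delta\bX|\,|\,\Fc_t]$ must be bounded pathwise, but the forward estimate above controls $\delta X$ only in $L^2$. Because $\sigma$ depends on $X$, $\delta X$ is not a deterministic function of $U-U'$.

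What I would try first: bound $\E^{\Q}[\sup_s|\delta X_s|^2\,|\,\Fc_t]$ by a conditional BDG–Gronwall argument started at time $t$. This gives $\E^{\Q}[\sup_s|\delta X_s|^2|\Fc_t]\le C(|\delta X_t|^2+T^2\lVert U-U'\rVert^2_\infty)$, where the factor $T^2$ comes from the drift difference $\int(U-U')\,\d s$. The term $|\delta X_t|$ still appears here, so I would track $\delta X$ forward jointly with the backward estimate.

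If this proves too delicate, the fallback is to run the fixed point in the $\S^2\times\H^2$ norm instead, keeping the uniform $\S^\infty$/BMO bounds from Step 1 only to control the products $|\delta X||Z'|$ via the energy inequalities for BMO martingales. The $\S^\infty$ regularity of the solution is then recovered a posteriori from Step 1.
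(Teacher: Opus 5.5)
Your Steps 1--2 match the paper's construction: Girsanov under Assumption~\ref{eq:assumsig}, the pathwise bound $\sup_t|Y_t|\le N\lambda C_K$ by backward recursion over the grid, and a BMO bound on $Z$ that is uniform in the input. The gap is Step 3, which carries the whole difficulty. The route you choose for it, a contraction in $\S^\infty$, is false in general and not merely delicate.

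The reason is that $\delta X$ solves $\d\,\delta X_t=-(U_t-U'_t)\,\d t+(\sigma(t,X_t)-\sigma(t,X'_t))\,\d W_t$, and its martingale part makes it unbounded as soon as $\sigma$ depends on $x$. Take $d=n=1$, $\sigma(t,x)=x$ and constant inputs $U\equiv u$, $U'\equiv u'$. Then $\delta X_t=-(u-u')\,e^{W_t-t/2}\int_0^t e^{-W_s+s/2}\,\d s$, which exceeds every level with positive probability for every $T>0$.

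Already at $t=T$, where conditioning plays no role, $\Psi(U)_T-\Psi(U')_T=\lambda\big(\partial_{x_N}\frac{\delta}{\delta\bm}D_K(\P_{\bX}|\rho)(\bX)-\partial_{x_N}\frac{\delta}{\delta\bm}D_K(\P_{\bX'}|\rho)(\bX')\big)$. Remark~\ref{rmk : lipschitz_kernel}(2) only gives $\lambda C_K\lVert\delta\bX\rVert_{L^\infty}=\infty$. Moreover, for $N=1$, the Gaussian kernel, $\rho\neq\delta_0$ and suitable $u$, the essential supremum of this difference stays above $c\lambda|u-u'|$, with $c>0$ independent of small $T$ and $|u-u'|$. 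So no bound $C\phi(T)\lVert U-U'\rVert_{\S^\infty}$ with $\phi(T)\to0$ exists. Your conditional BDG--Gronwall attempt fails for exactly this reason, since the leftover $|\delta X_t|^2$ is unbounded. The $L^\infty$ route only works for state-independent $\sigma$.

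Your fallback is the right idea and is essentially the paper's argument. However, it is precisely the estimate that has to be proved, and the norm must match the tool.

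The paper runs the fixed point in $\S^4$. The forward step gives $\E[\sup_t|\delta X_t|^4]\le C_1T^4\lVert\delta\bar Y\rVert_{\S^4}^4$. The cross term is handled by $\E[\int_0^T|\delta X_t|^2|Z'_t|^2\d t]\le\E[\sup_t|\delta X_t|^4]^{1/2}\,\E[(\int_0^T|Z'_t|^2\d t)^2]^{1/2}$. The last factor is bounded uniformly in the input by the energy inequality for BMO, which is why fourth moments appear.

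In $\S^2$, as you propose, energy inequalities plus Cauchy--Schwarz would again require $\E[\sup_t|\delta X_t|^4]$, which $\S^2$ inputs do not control. To stay in $\S^2$ you need the sharper bound $\E[\int_0^TV^*_t|Z'_t|^2\,\d t]\le\lVert Z'\rVert_{\mathrm{BMO}}^2\,\E[V^*_T]$, where $V^*_t:=\sup_{s\le t}|\delta X_s|^2$ and $V^*_0=0$. It follows by writing $\int_0^TV^*_t\,\d A_t=\int_0^T(A_T-A_s)\,\d V^*_s$ with $A_t:=\int_0^t|Z'_s|^2\d s$, then replacing $A_T-A_s$ by $\E[A_T-A_s|\Fc_s]$.

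With this bound in hand, three ingredients close the fixed point:
\begin{enumerate}
\item the linear BSDE estimate on each subinterval;
\item the jump differences, controlled by Jensen's inequality and the Lipschitz property in $(\bx,\Wc_2)$;
\item the backward recursion over the $N$ subintervals.
\end{enumerate}
Together they give $\E[\sup_t|\delta Y_t|^2]\le C\,\E[\sup_t|\delta X_t|^2]\le CT^2\,\E[\sup_t|U_t-U'_t|^2]$. As written, your proposal stops exactly where this estimate is needed, so it does not yet prove the theorem.
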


\begin{proof}
For $1 \leq k \leq N$, denote
\begin{align*}
G_k(\boldsymbol{\mu},\bx) := \int_{(\R^d)^N} \partial_{x_k} K(\bx,\by) \boldsymbol{\mu}(\d \by) - \int_{(\R^d)^N} \partial_{x_k} K(\bx,\by) \rho(\d \by).
\end{align*}
By Remark \ref{rmk : lipschitz_kernel}, $G_k$ is bounded and Lipschitz in $(\boldsymbol{\mu},\bx)$, uniformly in $k$: there exists $L > 0$ such that
\begin{align*}
| G_k(\boldsymbol{\mu},\bx) - G_k(\boldsymbol{\mu}',\bx') | \leq L \big( \Wc_2(\boldsymbol{\mu}, \boldsymbol{\mu}') + |\bx - \bx'|\big).
\end{align*}
We carry out the fixed point argument on the Banach space $\Bc := \S^4([0,T];\R^d)$ endowed with the norm
$$
    \lVert \bar Y \rVert_{\Bc}^4 := \E \big[ \sup_{0 \leq t \leq T} | \bar Y_t|^4 \big].
$$
\noindent \textbf{Construction of the map $\Psi$.} Given $\bar Y \in \Bc$, we define $\Psi(\bar Y) := Y$ through the following two steps.
\begin{enumerate}
\item [(i)] \emph{Forward SDE.} Consider the forward SDE
\begin{align}\label{eq : forward_phi}
\d X_t = - \bar Y_t \d t + \sigma(t,X_t) \d W_t, \quad X_0 = 0.
\end{align}
Since $\sigma$ is Lipschitz in $x$ (by the boundedness of $\partial_x \sigma$, see Assumption \ref{assumption : regularity_pontryagin_optimality}) and $\bar Y \in \Bc \subset \H^2$, standard SDE theory yields a unique strong solution $X \in \S^4([0,T];\R^d)$. Denote $\bX := (X_{t_1}, \ldots, X_{t_N}) \in L^4(\Omega;(\R^d)^N)$ and $\boldsymbol{\mu} := \P_{\bX} \in \Pc_2((\R^d)^N)$.
\item [(ii)] \emph{Backward BSDE with jumps.} Given $X$ and $\boldsymbol{\mu}$ from step (i), define $(Y,Z)$ as the unique solution of the BSDE with jumps at the deterministic times $(t_k)_{1 \leq k \leq N-1}$
\begin{align}\label{eq : backward_phi}
\begin{cases}
    \d Y_t &= - \partial_x \big(\sigma(t,X_t) \cdot Z_t \big)\d t + Z_t \d W_t, \quad t_{k-1} \leq t < t_{k}, \; k \in \llbracket 1, N-1 \rrbracket, \; \text{and } t_{N-1} \leq t \leq t_N,\\
    Y_{t_k} - Y_{t_k^-} &= - \lambda \E \big[ G_k(\boldsymbol{\mu}, \bX) \big| \Fc_{t_k} \big], \quad k \in \llbracket 1, N-1 \rrbracket, \\
    Y_{t_N} &= \lambda G_N(\boldsymbol{\mu},\bX).
\end{cases}
\end{align}
Since $G_k$ is bounded, the jump terms and the terminal condition are bounded $\Fc_{t_k}$-measurable random variables. The driver $(y,z) \mapsto -  \partial_x \big(\sigma(t,X_t) \cdot z \big)\d t$ is Lipschitz in $z$ uniformly in $(t,\omega)$ by the boundedness of $\partial_x \sigma$. Solving the BSDE backward on each interval $[t_{k-1}, t_k)$ by the standard theory for Brownian BSDEs and imposing the prescribed jumps at $(t_k)_{1 \leq k \leq N-1}$ produces a unique solution $(Y,Z)$. Moreover, the boundedness of the terminal condition and of the jumps yields a stronger estimate on the backward solution. More precisely, let
$$
    \overline G:=\sup_{1\leq k\leq N}
    \sup_{\boldsymbol{\mu},\bx}
    |G_k(\boldsymbol{\mu},\bx)|<\infty.
$$
Then 
$$
    |Y_{t_N}|
    \leq \lambda\overline G,
    \quad
    |Y_{t_k}-Y_{t_k^-}|
    \leq \lambda\overline G,
    \quad k\in\llbracket1,N-1\rrbracket, \quad \P-\text{a.s},
$$
Now, under Assumption \ref{eq:assumsig}, we have for all $t \in [0,T]$, 
\begin{align}
\partial_x \big(\sigma(t,X_t) \cdot Z_t \big)  &=\;  Z_t \Sigma(t,X_t).   
\end{align}
Moreover, by Assumption \ref{assumption : regularity_pontryagin_optimality}, there exists a constant
$C_\sigma>0$ such that
$$
    |\Sigma(t,X_t)|\leq C_\sigma,
    \qquad 0\leq t\leq T, \quad \P-\text{a.s}, 
$$
thus,  by  Novikov's condition, and Girsanov's theorem, there exists an equivalent probability measure $\Q$ such that
$$
    W_t^\Q:=W_t-\int_0^t \Sigma(s,X_s)\,\d s,
$$
is a Brownian motion under $\Q$. Hence, on every interval
$[t_{k-1},t_k)$, $\d Y_t=Z_t\,\d W_t^\Q.$ Consequently,
$$
    Y_t
    =
    \E^\Q\left[
        Y_{t_k^-}\mid\Fc_t
    \right],
    \qquad
    t\in[t_{k-1},t_k).
$$
Starting from the terminal interval, we have $|Y_{t_N}|\leq\lambda\overline G,$ and therefore $|Y_t|
    \leq\lambda\overline G$, for $t\in[t_{N-1},t_N].$
At time $t_{N-1}$,
$$
    Y_{t_{N-1}^-}
    =
    Y_{t_{N-1}}
    +
    \lambda
    \E\left[
        G_{N-1}(\boldsymbol{\mu},\bX)
        \mid\Fc_{t_{N-1}}
    \right],
$$
and thus $|Y_{t_{N-1}^-}|\leq 2\lambda\overline G.$ Repeating this argument backward over the successive intervals gives
\begin{align}\label{eq : uniform_Y_bound}
\sup_{0\leq t\leq T}|Y_t|
\leq N\lambda\overline G,
\qquad \P\text{-a.s.}
\end{align}
In particular, $Y\in\S^\infty([0,T];\R^d),$
with a bound independent of the input $\bar Y$. We next derive a BMO estimate for $Z$. Let $\tau$ be an arbitrary stopping time. Applying Ito's formula to $|Y_t|^2$ on each interval between successive observation times and summing the resulting identities from $\tau$ to $T$, we obtain
\begin{align*}
\int_\tau^T |Z_t|^2\,\d t
&=
|Y_T|^2-|Y_\tau|^2
+2\int_\tau^T
Y_t\cdot \partial_x \big(\sigma(t,X_t) \cdot Z_t \big)\,\d t
-2\int_\tau^T
Y_t\cdot Z_t\,\d W_t
+R_\tau,
\end{align*}
where $R_\tau$ contains the finitely many contributions produced by the jumps of
$Y$ after $\tau$. Since both $Y$ and its jumps are uniformly bounded, there exists
a deterministic constant $C>0$, depending only on
$N$, $\lambda$, and $\overline G$, such that $|R_\tau|\leq C.$ Taking the conditional expectation with respect to $\Fc_\tau$ yields
\begin{align*}
\E\left[
\int_\tau^T |Z_t|^2\,\d t\Big|\Fc_\tau
\right]
&\leq
C
+
2\E\left[
\int_\tau^T
|Y_t|\,|\partial_x\sigma(t,X_t)|\,|Z_t|\,\d t\Big|\Fc_\tau
\right].
\end{align*}
By \eqref{eq : uniform_Y_bound} and the boundedness of $\partial_x\sigma$,

$$
    2|Y_t|\,|\partial_x\sigma(t,X_t)|\,|Z_t|
    \leq
    2N\lambda\overline G\,C_\sigma |Z_t|.
$$
Young's inequality gives
$$
    2N\lambda\overline G\,C_\sigma |Z_t|
    \leq
    \frac12|Z_t|^2
    +
    2N^2\lambda^2\overline G^2 C_\sigma^2.
$$
Therefore,
\begin{align*}
\E\left[
\int_\tau^T|Z_t|^2\,\d t\Big|\Fc_\tau
\right]
&\leq
C
+
\frac12
\E\left[
\int_\tau^T|Z_t|^2\,\d t\Big|\Fc_\tau
\right]
+
2N^2\lambda^2\overline G^2C_\sigma^2T.
\end{align*}
Hence, we obtain
$$
    \E\left[
        \int_\tau^T|Z_t|^2\,\d t\Big|\Fc_\tau
    \right]
    \leq C,
$$
where $C$ is deterministic and independent of $\tau$ and of the input $\bar Y$.
Consequently,  $Z\in\mathrm{BMO}$ and 
$$
    \sup_{\tau}
    \left\|
        \E\left[
            \int_\tau^T|Z_t|^2\,\d t
            \Big|\Fc_\tau
        \right]
    \right\|_{L^\infty}
    <\infty.$$
\end{enumerate}
A process $(X,Y,Z) \in \S^4([0,T];\R^d) \times \S^4([0,T];\R^d) \times \H^2([0,T];\R^{d \times n})$ solves \eqref{eq : FBSDE_jumps_kernel} if and only if $Y = \Psi(Y)$ and $(X,Z)$ are the processes defined in steps (i)-(ii).

\noindent \textbf{Contraction estimate.} Let $\bar Y^1, \bar Y^2 \in \Bc$, and denote $(X^i, Y^i, Z^i)$, $\boldsymbol{\mu}^i := \P_{\bX^i}$ for $i=1,2$, the processes constructed from $\bar Y^i$ as above. Write $\Delta \bar Y := \bar Y^1 - \bar Y^2$ and similarly $\Delta X, \Delta Y, \Delta Z, \Delta \bX = \bX^1 - \bX^2$.

\noindent \emph{Forward estimate.} Subtracting the two forward equations, applying Ito's formula to $|\Delta X_t|^4$, using the Lipschitz property of $\sigma$, BDG's inequality and Gronwall's lemma, we obtain
\begin{align}\label{eq : forward_contraction}
\E \big[ \underset{0 \leq t \leq T}{\text{sup}} | \Delta X_t |^4 \big]
\leq C_1 T^3 \, \E \Big[ \int_{0}^{T} |\Delta \bar Y_t|^4 \d t \Big]
\leq C_1 T^4 \, \lVert \Delta \bar Y \rVert_{\Bc}^4,
\end{align}
where $C_1$ depends only on the Lipschitz constant of $\sigma$ and is bounded in $T$ for $T \leq 1$.

\noindent \emph{Backward estimate.} From $\Wc_2(\boldsymbol{\mu}^1,\boldsymbol{\mu}^2)^2 \leq \E [|\Delta \bX|^2] \leq N \, \E[ \sup_{0 \leq t \leq T}|\Delta X_t|^2]$ and the Lipschitz property of $G_k$, the terminal and jump differences are controlled by $\Delta X$. Moreover,
\begin{align*}
\partial_x \big(\sigma(t,X_t^1) \cdot Z_t^1 \big) -\partial_x \big(\sigma(t,X_t^2) \cdot Z_t^2 \big) =
\partial_x \big( \sigma(t,X_t^1) \cdot \Delta Z_t) \big) 
+ \partial_x \big( (\sigma(t,X_t^1)-\sigma(t,X_t^2) ) \cdot Z_t^2 \big)
\end{align*}
The first term is controlled by the boundedness of $\partial_x \sigma$, while the Lipschitz continuity of $\partial_x \sigma$ gives
\begin{align*}
\left|
\partial_x \big( (\sigma(t,X_t^1)-\sigma(t,X_t^2) ) \cdot Z_t^2 \big)
\right|
\leq
L_\sigma |\Delta X_t|\,|Z_t^2|.
\end{align*}
Using the uniform BMO bound on $Z^2$, we have
\begin{align*}
\E\left[
\int_0^T |\Delta X_t|^2 |Z_t^2|^2 \d t
\right]
&\leq
\left(
\E\left[
\sup_{0\leq t\leq T}|\Delta X_t|^4
\right]
\right)^{1/2}
\left(
\E\left[
\left(
\int_0^T |Z_t^2|^2 \d t
\right)^2
\right]
\right)^{1/2}
\\&\leq
C
\left(
\E\left[
\sup_{0\leq t\leq T}|\Delta X_t|^4
\right]
\right)^{1/2}.
\end{align*}
Standard fourth-moment stability estimates for BSDEs with jumps at deterministic times, together with the uniform BMO bound, yield
\begin{align}\label{eq : backward_contraction}
\lVert \Delta Y \rVert_{\Bc}^4
\leq
C_3 \,
\E \big[
\underset{0 \leq t \leq T}{\text{sup}}
|\Delta X_t|^4
\big],
\end{align}
where $C_3$ depends only on $\lambda,L,N$, the bounds on $\partial_x \sigma$, its Lipschitz constant, and the uniform BMO bound, and is bounded in $T$ for $T\leq1$.

\noindent \emph{Combining the estimates.} Inserting \eqref{eq : forward_contraction} into \eqref{eq : backward_contraction}, we get, for $T \leq 1$,
\begin{align*}
\lVert \Psi(\bar Y^1) - \Psi(\bar Y^2) \rVert_{\Bc}^4
\leq
C \, T^4 \,
\lVert \bar Y^1 - \bar Y^2 \rVert_{\Bc}^4,
\end{align*}
where $C := C_1 C_3$ is independent of $T$. Choosing $\delta > 0$ such that $C \delta^4 < 1$, the map $\Psi$ is a strict contraction on $\Bc$ for any $T \in (0,\delta]$. By the Banach fixed-point theorem, $\Psi$ admits a unique fixed point $Y \in \Bc$, and the associated processes $X \in \S^4$ and $Z \in \H^2$ defined by steps (i)-(ii) provide the unique solution $(X,Y,Z)$ to the FBSDE \eqref{eq : FBSDE_jumps_kernel} on $[0,T]$.
\end{proof}

\section{Conclusion}\label{sec : conclusion}

This paper develops a finite path-dependent McKean--Vlasov control framework for problems in which the objective depends on the law of a sampled path. The dependence on the past is described through a fixed observation grid, while the joint law of the observed values captures the distributional information of the path. This provides a finite-dimensional formulation for control problems with path-dependent distributional objectives.

The main feature of the resulting optimality system is the adjoint equation. In contrast with the classical McKean--Vlasov setting, the backward process has jumps at deterministic observation times. These jumps propagate the sensitivity of the future cost with respect to each observed state value and to its law. The stochastic maximum principle obtained in this paper therefore gives a natural control-theoretic description of finite path-dependence and of distributional costs on sampled paths.

This framework also provides a data-driven mathematical foundation for sample-based time-series generation through a relaxed McKean--Vlasov control formulation. Moreover, it is related to optimal transport and Schrödinger-Bass bridge approaches for time-series generation; see, for instance, \cite{hamdouche2023generative,alouadi2026sbbts}. In this setting, the control process $(\alpha_t)_{0\leq t\leq T}$ acts as an additional drift that steers a reference dynamics toward a target law on sampled paths. The deterministic observation dates may induce structural changes in the optimal control, consistently with the jumps appearing in the adjoint backward equation of our maximum principle.

Several directions remain open. A first one is to study relaxed formulations in which a family of costs $J^\lambda(\alpha)$ approximates the Schrödinger bridge cost $J(\alpha)$ as the penalty parameter varies. Proving such a convergence would clarify the link between the present McKean--Vlasov control problem and the Schrödinger--Bass bridge problem for time-series generation. It would also be useful to identify verifiable assumptions under which the Hamiltonian and the terminal cost are convex in the McKean--Vlasov sense used in the sufficient maximum principle. Such conditions would strengthen the optimality theory and may lead to uniqueness of the optimal control, and of the optimal state-control law, beyond the linear-quadratic setting.

Further work also includes the numerical analysis of the forward--backward system arising from the control problem, the extension of global solvability results beyond the linear-quadratic case, and the treatment of less regular costs such as the Kullback--Leibler divergence.

\appendix

\renewcommand{\thesection}{\Alph{section}}

\section{Proof of Proposition \ref{thm : existence_unicity_X_SDE}}\label{app:proofs}

\begin{proof}[Proof of Proposition \ref{thm : existence_unicity_X_SDE}]
Let $\nu \in \Pc_2(\Cc([0,T];\R^d))$. Denote the projection maps 
\begin{align*}
\begin{cases}
   x_{s} &: \Cc([0,T];\R^d) \ni \omega \mapsto \omega_{s} \in \R^d, \quad  \text{ for any $s \in [0,T]$}, \\
   x_{(t_1,\ldots,t_N)} &: \Cc([0,T];\R^d) \ni \omega \mapsto (\omega_{t_1},\ldots, \omega_{t_N}) \in (\R^d)^N.
\end{cases}
\end{align*}
For any $\bnu \in\Pc_2(\Cc([0,T];\R^d))$, we denote $\bnu_t := x_t \sharp \bnu$ and $\bnu_{(t_1,\ldots,t_N)} := x_{(t_1,\ldots, t_N)} \sharp \bnu$ and  by $X^{\bnu}$ the solution to
\begin{align}\label{eq : SDE_nu_by_nu}
\begin{cases}
    \d X_t^{\bnu} &= b(t, \bX^{\bnu}, \bnu_{(t_1,\ldots,t_N)}, X_t^{\bnu}, \bnu_t, \alpha_t) \d t + \sigma(t, \bX^{\bnu}, \bnu_{(t_1,\ldots,t_N)}, X_t^{\bnu}, \bnu_t, \alpha_t) \d W_t, \\
    X_0^{\bnu} &= \xi.
\end{cases}
\end{align}
On each interval between the successive dates $t_0 = 0,t_1,\ldots, t_{N-1}$, at which the time dependence of the coefficients may be discontinuous, we construct a solution belonging to $\S^2([0,T];\R^d)$ as follows. On $[t_0,t_1)$, we consider the SDE $X^{(0),\bnu}$ solution to
\begin{align*}
\begin{cases}
    \d X_t^{(0),\bnu} &= b(t, X_t^{(0),\bnu}, \bnu_t , \alpha_t) \d t + \sigma(t, X_t^{(0),\bnu}, \bnu_t,\alpha_t) \d W_t, \\
    X_0^{(0),\bnu} &= \xi.
\end{cases}
\end{align*}
where, for $t < t_1$, we write $b(t,x,\mu,a)$ and $\sigma(t,x,\mu,a)$ for the common values $\tilde{b}_t(x,\mu,a)$ and $\tilde{\sigma}_t(x,\mu,a)$, the path arguments being void. Under our assumptions, $X^{(0), \bnu}$ admits a continuous $\F$-adapted  solution, unique up to a $\P$-null set, over the set $[0,t_1)$. By the assumption on the model coefficients and standard arguments, the solution can be extended continuously to $t_1$ since $\underset{t \nearrow t_1}{\text{ lim }} X_{t}^{(0),\bnu}$ exists $\P-\text{a.s}$ and by setting $X_{t_1}^{(0),\bnu} =\underset{t \nearrow t_1}{\text{ lim }} X_{t}^{(0),\bnu} $. Moreover, we have that $X_{t_1}^{(0),\bnu} \in L^2(\Omega, \Fc ,\P)$. Now, we consider on $[t_1,t_2)$ the SDE $X^{(1),\bnu}$ solution to
\begin{align*}
\begin{cases}
    \d X_t^{(1),\bnu} &=  b(t,X_{t_1}^{(0),\bnu}, \bnu_{t_1},X_t^{(1),\bnu}, \bnu_t , \alpha_t) \d t  + \sigma(t,X_{t_1}^{(0),\bnu}, \bnu_{t_1},X_t^{(1),\bnu}, \bnu_t , \alpha_t) \d W_t , \\
    X_{t_1}^{(1),\bnu} &= X_{t_1}^{(0),\bnu}.
\end{cases}
\end{align*}
Iterating this construction on each interval $[t_i, t_{i+1})$ for $i \in \llbracket 0, N-1 \rrbracket$, we obtain a family of continuous solutions $X^{(i),\bnu}$. We then define the global process $X^{\bnu}$ on $[0,T]$ as
\begin{align}\label{eq : process solution_nu_fixed}
    X_t^{\bnu} := \sum_{i=0}^{N-2} \mathds{1}_{t_i \leq t < t_{i+1}} X_{t}^{(i),\bnu} + \mathds{1}_{t_{N-1} \leq t \leq t_N} X_{t}^{(N-1),\bnu}, \quad 0 \leq t \leq T.
\end{align}
This process is well defined (we set $X_T^{(N-1),\bnu} := \lim_{t \nearrow t_N} X_t^{(N-1),\bnu}$) and $\P-\text{a.s}$ continuous (since at date $t_{i}$, we start from the previous equality) and it is easy to verify that $\E \big[ \underset{0 \leq t \leq T}{\text{sup }} |X_t^{\bnu}|^2 \big] < \infty$ from standard estimates and is clearly a solution to \eqref{eq : SDE_nu_by_nu}. Therefore, we can now define the  map $\Psi$ from $\Pc_2(\Cc([0,T];\R^d))$ into itself as follows
\begin{align*}
    \Pc_2(\Cc([0,T];\R^d)) \ni \bnu \mapsto \Psi(\bnu) := \P_{X^{\bnu}} \in \Pc_2(\Cc([0,T];\R^d)).
\end{align*}
where $X^{\bnu}$ is the process constructed in \eqref{eq : process solution_nu_fixed}. We now prove that $\Psi$ has a unique fixed point on the complete metric space $\Pc_2(\Cc([0,T];\R^d))$ endowed with the distance  $\Wc_{2,T}$. Let $\bnu^1, \bnu^2 \in \Pc_2(\Cc([0,T];\R^d))$ and denote $X^{\bnu^1}, X^{\bnu^2}$ the corresponding solutions to \eqref{eq : SDE_nu_by_nu}. We therefore have (we only treat the diffusion term for readability, the drift term being handled similarly)
\begin{align*}
    &\E \Big[ \underset{0 \leq r \leq s}{\text{sup }} |X_r^{\bnu^1} - X_r^{\bnu^2} |^2 \Big] \leq C \E \Big[ \int_{0}^{s} | \sigma(r, \bX^{\bnu^1}, \bnu^1_{(t_1,\ldots, t_N)}, X_r^{\bnu^1}, \bnu^1_r, \alpha_r) - \sigma(r, \bX^{\bnu^2}, \bnu^2_{(t_1,\ldots,t_N)}, X_r^{\bnu^2}, \bnu^2_r, \alpha_r)|^2 \d r     \Big].
\end{align*}
Using the inequalities $\Wc_2^2(\P_{X_s}, \delta_0) \leq \E[|X_s|^2]$ and $\Wc_2^2(\P_{\bX_{\eta(s)}}, \boldsymbol{\delta}_0) \leq \sum_{i=1}^{\eta(s)} \E \big[|X_{t_i}|^2 \big]$, and since $t_{\eta(s)} \leq s$, we have
\begin{align*}
    \sum_{i=1}^{\eta(s)} \E[ |X_{t_i}|^2] \leq N \E \big[ \underset{0 \leq r \leq s}{\text{ sup }} |X_r|^2 \big].
\end{align*}
Moreover, since the map $x_s$ and $x_{(t_1,\ldots,t_N)}$ are respectively $1$- and $\sqrt{N}$-Lipschitz with respect to $\lVert \cdot \rVert_{\infty}$ norm, it follows from the Lipschitz assumption on $\sigma$ that
\begin{align*}
    &\E \Big[ \underset{0 \leq r \leq s}{\text{sup }} |X_r^{\bnu^1} - X_r^{\bnu^2} |^2 \Big] \leq C \E \Big[ \int_{0}^{s} \Big(\E \big[\underset{0 \leq l \leq r }{\text{ sup }} |X_l^{\bnu^1} - X_l^{\bnu^2} |^2 \big] + \Wc_{2,r}^2(\bnu^{1},\bnu^2)\Big)  \d r     \Big].
\end{align*}
Applying Grönwall's inequality to the map $s \mapsto \E \Big[ \underset{0 \leq r \leq s}{\text{sup }} |X_r^{\bnu^1} - X_r^{\bnu^2} |^2 \Big]$, we end up with
\begin{align}\label{eq : intermediate_estimate}
    \Wc_{2,s}^2 \big(\Psi(\bnu^1), \Psi(\bnu^2) \big) \leq \E \Big[ \underset{0 \leq r \leq s}{\text{sup }} |X_r^{\bnu^1} - X_r^{\bnu^2} |^2 \Big] \leq C \int_{0}^{s} \Wc_{2,r}^2(\bnu^1,\bnu^2) \d r,
\end{align}
for some constant $C > 0$ that only depends on the Lipschitz constants of $b,\sigma$, $T$ and $N$. Setting $s=T$, we have the continuity of the map $\Psi$ and if we iterate \eqref{eq : intermediate_estimate}, we have from standard computations
\begin{align*}
    \Wc_{2,s}^2 \big(\Psi^{(k+1)}(\bnu^1), \Psi^{(k+1)}(\bnu^2) \big)  \leq \frac{C^{k+1}}{k!} \int_{0}^{s} \Wc_{2,r}^2(\bnu^1,\bnu^2)(s-r)^k \d r.
\end{align*}
Now, since $\Wc_{2,s}(\bnu^1,\bnu^2) \leq \Wc_{2,T}(\bnu^1,\bnu^2)$ for any $0 \leq s \leq T$, choosing an arbitrary $\bnu^{(0)} \in \Pc_2(\Cc([0,T];\R^d))$ and denoting the sequence $\bnu^{(k+1)} = \Psi(\bnu^{(k)})$ for $k \geq 0$, it follows
\begin{align}\label{eq : inequality_W_2T}
    \Wc_{2,T}^2(\Psi(\bnu^{(k)}), \bnu^{(k)}) \leq \frac{C^k T^k}{k!} \Wc_{2,T}^2(\bnu^{(1)},\bnu^{(0)}).
\end{align}
Therefore, the sequence $(\bnu^{(k)})_{k \in \N}$ is a Cauchy sequence for $\Wc_{2,T}$ and converges therefore on the complete metric space $\Pc_2(\Cc([0,T];\R^d))$ to a limit $\boldsymbol{\bar{\bnu}}$ which is a fixed point for $\Psi$ from the continuity of $\Psi$. The uniqueness follows from \eqref{eq : inequality_W_2T} and denoting $X^{\boldsymbol{\bar{\nu}}}$ the process associated to the fixed point $\boldsymbol{\bar{\nu}}$, it is therefore a solution. Moreover, if $\tilde{X}$ is another solution to \eqref{eq : state_process_dynamics}, then its law $\mathbb{P}_{\tilde{X}}$ belongs to $\Pc_2( \Cc([0,T];\R^d))$, is a fixed point for the map $\Psi$, and must therefore coincide with $\bar{\bnu}$; it then follows, by uniqueness of the solution to the SDE with fixed input law $\bar{\bnu}$, that $\tilde{X} = X^{\bar{\bnu}}$ up to indistinguishability. Denote by $X=(X_t)_{0 \leq t \leq T}$ the unique strong solution to \eqref{eq : state_process_dynamics}. Then, we have 
\begin{align*}
    X_t  = \xi + \int_{0}^{t} b(s, \bX, \P_{\bX}, X_s, \P_{X_s},\alpha_s) \d s + \int_{0}^{t} \sigma(s, \bX, \P_{\bX} , X_s, \P_{X_s} ,\alpha_s ) \d W_s, \quad 0 \leq t \leq T.
\end{align*}
From the Cauchy-Schwarz inequality, we have
\begin{align*}
    |X_t|^2 &\leq 3 \Big( |\xi|^2 + |\int_{0}^{t} b(s, \bX, \P_{\bX} , X_s, \P_{X_s}, \alpha_s) \d s |^2 +  | \int_{0}^{t} \sigma(s,\bX,\P_{\bX}, X_s, \P_{X_s},\alpha_s) \d W_s |^2 \Big),
\end{align*}
with $0 \leq t \leq T$. Now taking the supremum over $[0, T]$, taking the expectation, using the BDG inequality and the Cauchy--Schwarz inequality $|\int_{0}^{T} f(s) \d s|^2 \leq T \int_{0}^{T} |f(s)|^2 \d s $ for any measurable map $f : [0,T] \to \R^d$, we have
\begin{align*}
    \E \Big[ \underset{0 \leq s \leq t}{\text{ sup }} |X_s|^2 \Big] &\leq 3 \bigg( \E \big[|\xi|^2 \big]  + T \E \Big[\int_{0}^t | b(s,\bX,\P_{\bX}, X_s, \P_{X_s},\alpha_s)|^2  \d s \Big] \\
    &\quad + 4 \E \Big[\int_{0}^{t}  | \sigma(s,\bX,\P_{\bX}, X_s, \P_{X_s},\alpha_s)|^2 \d s \Big] \bigg).
\end{align*}
Now, from the Lipschitz continuity and linear growth of $b$ and $\sigma$ (Assumption \ref{assumption : regularity_b_and_sigma}), and denoting by $C$ a generic positive constant, we have
\begin{align*}
     \E \Big[ \underset{0 \leq s \leq t}{\text{ sup }} |X_s|^2 \Big]  &\leq C  \bigg( 1 + \E \big[ |\xi|^2 \big] + \E \Big[ \int_{0}^t \big(| \bX_{\eta(s)}|^2+ \Wc_2^2(\P_{\bX_{\eta(s)}}, \boldsymbol{\delta}_0) + |X_s|^2 + \Wc_2^2(\P_{X_s}, \delta_0) + |\alpha_s|^2  \big)\d s \Big] \bigg).
\end{align*}
Using again the inequalities $\Wc_2^2(\P_{X_s}, \delta_0) \leq \E[|X_s|^2]$ and $\Wc_2^2(\P_{\bX_{\eta(s)}}, \boldsymbol{\delta}_0) \leq \sum_{i=1}^{\eta(s)} \E \big[|X_{t_i}|^2 \big]$, and since $t_{\eta(s)} \leq s$, we have
\begin{align*}
    \sum_{i=1}^{\eta(s)} \E[ |X_{t_i}|^2] \leq N \E \big[ \underset{0 \leq r \leq s}{\text{ sup }} |X_r|^2 \big],
\end{align*}
which implies 
\begin{align*}
     \E \Big[ \underset{0 \leq s \leq t}{\text{ sup }} |X_s|^2 \Big]  \leq C \bigg(1 +  \E \big[ |\xi|^2 \big] +  \int_{0}^{t}     \Big(\E\big[ \underset{0 \leq r \leq s }{\text{ sup }} |X_r|^2 \big] + \E \big[ |\alpha_s|^2\big] \Big) \d s \bigg).
\end{align*}
The estimate then follows from Grönwall's inequality applied to the map $t \mapsto \E \big[ \underset{0 \leq s \leq t}{\text{ sup }} |X_s|^2 \big]$.
\end{proof}

\bibliographystyle{siamplain}
\bibliography{main}

\end{document}